\newcommand{\ALOOP}[1]{\ALC@it\algorithmicloop\ #1%
  \begin{ALC@loop}}
\newcommand{\ENDALOOP}{\end{ALC@loop}\ALC@it\algorithmicendloop}
\definecolor{darkblue}{RGB}{0,0,160}
\newtheorem{thm}{Theorem}[section]
\newtheorem{lemma}[thm]{Lemma}
\newtheorem{cor}[thm]{Corollary}
\newtheorem{prop}[thm]{Proposition}
\newtheorem{question}[thm]{Question}
\theoremstyle{definition}
\newtheorem{example}[thm]{Example}
\newtheorem{remark}[thm]{Remark}
\newtheorem{defn}[thm]{Definition}
\numberwithin{equation}{section}
\newcommand{\ring}[1]{\ensuremath{\mathbb{#1}}}
\newcommand\NN{\ring{N}}
\newcommand\QQ{\ring{Q}}
\newcommand\RR{\ring{R}}
\newcommand\ZZ{\ring{Z}}
\newcommand\cA{{\mathcal A}}
\newcommand\cC{{\mathcal C}}
\newcommand\cF{{\mathcal F}}
\newcommand\cH{{\mathcal H}}
\newcommand\cM{{\mathcal M}}
\newcommand\cG{{\mathcal G}}
\newcommand\cP{{\mathcal P}}
\newcommand\cR{{\mathcal R}}
\newcommand\cV{{\mathcal V}}
\DeclareMathOperator\rank{rank} 
\DeclareMathOperator\img{img} 
\newcommand{\inD}[1][\relax]{\def\argone{#1}\def\temprelax{\relax}
  \ifx\argone\temprelax\right.\else\,\middle|#1\right.{}\fi}
\newcommand{\rayMat}[3]{A_{#1}(#2,#3)}
\newcommand{\fiber}[2]{\mathcal{F}_{#1,#2}}
\newcommand{\fibergraph}[3]{\mathcal{F}_{#1,#2}\left(#3\right)}
\newcommand{\scfibergraph}[3]{\mathcal{F}^c_{#1,#2}\left(#3\right)}
\newcommand{\ray}[2]{\mathcal{R}_{#2}(#1)}
\newcommand{\longestRay}[2]{\mathcal{R}_{#1,#2}}
\newcommand{\polyray}[3]{\mathcal{R}_{#3,#2}(#1)}
\newcommand{\heatbath}[4]{\mathcal{H}^{#1,#2}_{#3,#4}}
\newcommand{\heatbathmove}[3]{\mathcal{H}^{#1}_{#2,#3}}
\newcommand{\auglen}[2]{\cA_{#1}(#2)}
\newcommand{\graver}[1]{\mathcal{G}_{#1}}
\newcommand{\diam}[1]{\mathrm{diam}(#1)}
\newcommand{\dist}[3]{\mathrm{dist}_{#1}(#2,#3)}
\newcommand{\conv}[1]{\mathrm{conv}_\QQ(#1)}
\newcommand{\spann}[1]{\mathrm{span}_\RR\left\{#1\right\}}
\newcommand{\spannQQ}[1]{\mathrm{span}_\QQ\left\{#1\right\}}
\newcommand\cone[1]{\NN#1}
\begin{document}

\title{Heat-bath random walks with Markov bases}

\author{Caprice Stanley}
\address{NC State University, Raleigh, NC 27695, USA} 
\email{crstanl2@ncsu.edu}

\author{Tobias Windisch}
\address{Otto-von-Guericke Universität\\ Magdeburg, Germany} 
\email{windisch@ovgu.de}

\date{\today}

\makeatletter
  \@namedef{subjclassname@2010}{\textup{2010} Mathematics Subject Classification}
\makeatother

\subjclass[2010]{Primary: 05C81, Secondary: 37A25, 11P21}


\keywords{Heat-bath random walks, sampling, lattice points, Markov bases}

\begin{abstract}
Graphs on lattice points are studied whose edges come from a finite
set of allowed moves of arbitrary length. We show that the diameter of
these graphs on fibers of a fixed integer matrix can be bounded from
above by a constant. We then study the mixing behaviour of heat-bath
random walks on these graphs. We also state explicit conditions on the
set of moves so that the heat-bath random walk, a generalization of
the Glauber dynamics, is an expander in fixed dimension.
\end{abstract}

\maketitle
\setcounter{tocdepth}{1}
\tableofcontents

\section{Introduction}

A \emph{fiber graph} is a graph on the finitely many lattice points
$\cF\subset\ZZ^d$ of a polytope where two lattice points are connected
by an edge if their difference lies in a finite set of allowed moves
$\cM\subset\ZZ^d$. The implicit
structure of these graphs makes them a useful tool to explore the set of lattice
points randomly: At the current lattice point $u\in\cF$, an
element $m\in\pm\cM$ is sampled and the random walk moves along $m$ if
$u+m\in\cF$ and stays at $u$ otherwise. The corresponding Markov chain
is irreducible if the underlying fiber graph is connected and the set
$\cM$ is called a \emph{Markov basis} for $\cF$ in this case. 
This paper investigates the \emph{heat-bath} version of this random
walk: At the current lattice point $u\in\cF$, we sample $m\in\cM$ and
move to a random element in the integer ray $(u+\ZZ\cdot m)\cap\cF$.
The authors of~\cite{Diaconis1998a} discovered that this random
walk can be seen as a discrete version of the \emph{hit-and-run}
algorithm~\cite{lovasz1999,Vempala2005,Lovasz2006} that has been used frequently
to sample from all the points of a polytope -- not only from its
lattice points. The popularity of the continuous version of
the hit-and-run algorithm has not spread to its discrete analog, and
not much is known about its mixing behaviour. One reason is that it is
already challenging to guarantee that all points
in the underlying set $\cF$ can be reached by a random walk that uses
moves from $\cM$, whereas for the continuous version, a random sampling
from the unit sphere suffices. However, in many situations where a
Markov basis is known, the heat-bath random walk is evidently fast.
For instance, it was shown in~\cite{Cryan2002} that the heat-bath
random walk on contingency tables mixes rapidly when the number of
columns is fixed.
To work around the connectedness issue, a \emph{discrete
hit-and-run} algorithm was introduced in~\cite{Baumert2009} for
arbitrary finite sets $\cF\subset\ZZ^d$. 
At each step in this random walk, a subordinate and unrestricted
random walk starts at the current lattice point $u \in \cF$ and uses
the unit vectors to collect a set of proposals $S \subset \ZZ^d$. The
random walk then moves from $u$ to a random point in $S \cap \cF$.

Random walks of the heat-bath type, such as the one presented above, have been
studied recently in~\cite{Dyer2014} in a more general context. In this
paper, we explore the mixing behaviour of heat-bath random walks on
the lattice points of polytopes with Markov bases. Throughout, we
assume that a Markov basis has been found already and refer to the
relevant literature for their
computation~\cite{Sturmfels1996,sullivant2003,hemmecke2005,malkin2007,hara2010,Rauh2014}.
We call the underlying graph of the heat-bath
random walk a \emph{compressed fiber graph}
(Definition~\ref{d:CompressedFibergraph}) and determine in
Section~\ref{s:Diameter} bounds on its graph-diameter. We prove that 
for any $A\in\ZZ^{m\times d}$ with $\ker_\ZZ(A)\cap\NN^d=\{0\}$, the
diameter of compressed fiber graphs on $\{u\in\NN^d:
Au=b\}$ that use a fixed Markov bases
$\cM\subset\ker_\ZZ(A)$ is bounded from above by a constant as $b$ varies
(Theorem~\ref{t:DiameterCompressedFiberGraphs}). In contrast, we 
show that the diameter of conventional fiber graphs grow linearly
under a dilation of the underlying polytope
(Remark~\ref{r:RaysOfMatrices}). This gives
rise to slow mixing results for conventional fiber walks as observed
in~\cite{windisch2015-mixing}.
In Section~\ref{s:HeatBath}, we study in more detail the combinatorial
and analytical structure of the transition matrices of heat-bath random
walks on lattice points and prove upper and lower bounds on their
second largest eigenvalues. We also discuss how the distribution on
the moves $\cM$ affects the speed of convergence
(Example~\ref{ex:SolvingSLEMOpti}).
Theorem~\ref{t:MixingofAugmentingMarkovBases} establishes with
the \emph{canonical path approach} from~\cite{Sinclair1992} an upper
bound on the second largest eigenvalue when the Markov basis is
\emph{augmenting} (Definition~\ref{d:Augmentation}) and the stationary
distribution is uniform. From that, we conclude fast mixing
results for random walks on lattice points in fixed dimension.

\subsection*{Acknowledgements}
CS was partially supported by the US National Science Foundation (DMS
0954865).  TW gratefully acknowledges the support received from the
German National Academic Foundation. 

\subsection*{Conventions and Notation}
The natural numbers are $\NN:=\{0,1,2,\ldots\}$ and for any $N\in\NN$,
$\NN_{> N}:=\{n\in\NN: n> N\}$ and $\NN_{\ge N}:=\{N\}\cup \NN_{> N}$.
For $n\in\NN_{>0}$, let $[n]:=\{1,\ldots,n\}$. Let $\cM\subset\QQ^d$
be a finite set, then $\ZZ\cdot\cM:=\{\lambda m: m\in\cM,
\lambda\in\ZZ\}$ and $\NN\cM$ is the affine semigroup in $\ZZ^d$
generated by $\cM$. For an integer matrix $A\in\ZZ^{m\times d}$ with
columns $a_1,\dots,a_d\in\ZZ^m$, we write $\NN
A:=\NN\{a_1,\ldots,a_d\}$. A
graph is always undirected and can have multiple loops.
The distance of two nodes $u,v$ which are contained in the same
connected component of a graph $G$, i.e. the number of
edges in a shortest path between $u$ and $v$ in $G$, is denoted by
$\dist{G}{u}{v}$. We set $\dist{G}{u}{v}:=\infty$ if $u$
and $v$ are disconnected. A mass
function on a finite set $\Omega$ is a map $f:\Omega\to[0,1]$ such
that $\sum_{\omega\in\Omega}f(\omega)=1$. A mass function $f$ on
$\Omega$ is \emph{positive} if $f(\omega)>0$ for all
$\omega\in\Omega$.  A set $\cF\subset\ZZ^d$ is \emph{normal} if it there
exists a polytope $\cP\subset\QQ^d$ such that $\cP\cap\ZZ^d=\cF$. 

\section{Graphs and statistics}

We first introduce the statistical framework in which this paper
lives and recall important aspects of the interplay between graphs
and statistics. A \emph{random walk} on a graph $G=(V,E)$ is a map
$\cH:V\times V\to[0,1]$ such that for all $v\in V$, $\sum_{u\in
V}\cH(v,u)=1$ and such that $\cH(v,u)=0$ if $\{v,u\}\not\in E$. 
When there is no ambiguity, we represent a random walk as an
$|V|\times|V|$-matrix, for example when it is clear how the elements
of $V$ are ordered. Fix a random walk $\cH$ on $G$.  Then $\cH$ is
\emph{irreducible} if for all $v,u\in V$ there exists $t\in\NN$ such
that $\cH^t(v,u)>0$. The random
walk $\cH$ is \emph{reversible} if there exists a mass function
$\mu:V\to[0,1]$ such that $\mu(u)\cdot \cH(u,v)=\mu(v)\cdot \cH(v,u)$ for
all $u,v\in V$ and \emph{symmetric} if $\cH$ is a symmetric map. A
mass function $\pi:V\to[0,1]$ is a \emph{stationary distribution}
of $\cH$ if $\pi\circ \cH =\pi$. For symmetric random walks, the uniform
distribution on $V$ is always a stationary distribution. 
If $|V|=n$, then we denote the eigenvalues of $\cH$ by
$1=\lambda_1(\cH)\ge\lambda_2(\cH)\ge\dots\ge\lambda_n(\cH)\ge -1$ and we
write $\lambda(\cH):=\max\{\lambda_2(\cH),-\lambda_n(\cH)\}$ for the
\emph{second largest eigenvalue modulus} of $\cH$.  Any irreducible
random walk has a unique stationary
distribution~\cite[Corollary~1.17]{levin2008} and $\lambda(\cH)\in[0,1]$
measures the convergence rate: the smaller $\lambda(\cH)$,
the faster the convergence.

The aim of this paper is to study random walks on lattice points that
use a set of moves. Typically, this is achieved by constructing a
graph on the set of lattice points as follows (compare
to~\cite[Section~1.3]{drton2008} and~\cite[Chapter~5]{Sturmfels1996}).

\begin{defn}\label{d:FiberGraphs}
Let $\cF\subset\ZZ^d$ be a finite set and $\cM\subset\ZZ^d$. The graph
$\cF(\cM)$ is the graph on $\cF$ where two nodes $u,v\in\cF$ are adjacent
if $u-v\in\cM$ or $v-u\in\cM$. 
\end{defn}

A normal set $\cF\subset\ZZ^d$ is finite and satisfies
$\cF=\conv{\cF}\cap\ZZ^d$. A canonical class of normal sets that arise
in many applications, is given by the fibers of an integer matrix:

\begin{defn}\label{d:Fibers}
Let $A\in\ZZ^{m\times d}$ and $b\in\cone{A}$. The set
$\fiber{A}{b}:=\{u\in\NN^d: Au=b\}$ is the \emph{$b$-fiber} of $A$.
The collection of all fibers of $A$ is $\cP_A:=\{\fiber{A}{b}:
b\in\NN A\}$. For $\cM\subset\ker_\ZZ(A)$, the graph
$\fibergraph{A}{b}{\cM}$ is a \emph{fiber graph}.
\end{defn}

Let $\cF,\cM\subset\ZZ^d$ be finite. If the membership in $\cF$ can be
verified efficiently  -- for instance when $\cF$ is given implicitly by
linear equations and inequalities -- then it is possible to explore
$\cF$ randomly using $\cM$ as follows: At a given node $v\in\cF$, a
uniform element $m\in\cM$ is selected. If $v+m\in\cM$, then the random
walk moves along $m$ to $v+m$ and if $v+m\not\in\cM$, the we stay at
$v$. Formally, we obtain the following random walk.

\begin{defn}\label{d:SimpleFiberWalk}
Let $\cF\subset\ZZ^d$ and $\cM\subset\ZZ^d$ be two finite sets. The
\emph{simple walk} is the random walk on $\cF(\cM)$ where the
probability to traverse between to adjacent nodes $u$ and $v$ is
$|\pm\cM|^{-1}$ and the probability to stay at a node $u$ is 
$|\{m\in\pm\cM: u+m\not\in\cF\}|\cdot|\pm\cM|^{-1}$.
\end{defn}

The simple walk is symmetric and hence the uniform distribution is a
stationary distribution (see also~\cite[Section~2]{windisch2015-mixing}). To
ensure convergence, the random walk has to be irreducible, that is,
the underlying graph has to be connected. The following definition is
a slight adaption of the generalized Markov basis as defined
in~\cite[Definition~1]{Rauh2014}.

\begin{defn}\label{d:Markovbasis}
Let $\cP$ be a collection of finite subsets of $\ZZ^d$. A finite set
$\cM\subset\ZZ^d$ is a \emph{Markov basis} of $\cP$, if for all
$\cF\in\cP$, $\cF(\cM)$ is a connected graph. 
\end{defn}

We refer to~\cite[Theorem~3.1]{Diaconis1998a} for a proof that for collections $\cP_A$,
a finite Markov basis always exists and can be computed with tools
from commutative algebra (see also~\cite{hemmecke2005} for more on the
computation of Markov bases).  We now introduce a construction of
graphs on lattice points that also give rise to implementable random
walks, but whose edges have far more reach.

\begin{defn}\label{d:CompressedFibergraph}
Let $\cF\subset\ZZ^d$ and $\cM\subset\ZZ^d$ be finite sets. The
\emph{compression} of the graph $\cF(\cM)$ is the graph
$\cF^c(\cM):=\cF(\ZZ\cdot\cM)$.
\end{defn}

\begin{figure}[htbp]
\centering
\begin{minipage}[b]{0.45\textwidth} 
\begin{minipage}[b]{0.45\textwidth} 
\centering
	\begin{tikzpicture}[xscale=0.5,yscale=0.5]
		\node [fill, circle, inner sep=1.5pt](b2) at (1,3) {};
		\node [fill, circle, inner sep=1.5pt](b2) at (1,4) {};

		\node [fill, circle, inner sep=1.5pt](b2) at (2,2) {};
		\node [fill, circle, inner sep=1.5pt](b2) at (2,3) {};

		\node [fill, circle, inner sep=1.5pt](b2) at (3,1) {};
		\node [fill, circle, inner sep=1.5pt](b2) at (3,2) {};
         
		\node [fill, circle, inner sep=1.5pt](b2) at (4,1) {};


      \foreach \X in {0,1,2,3,4,5}{
         \draw[dotted](\X,0) -- (\X,5);
      }

      \foreach \Y in {0,1,2,3,4,5}{
         \draw[dotted](0,\Y) -- (5,\Y);
      }



      \draw[thick](1,3) --(1,4);
      \draw[thick](2,2) --(2,3);
      \draw[thick](3,1) --(3,2);

       \draw[thick](4,1) -- (1,4);
       \draw[thick](3,1) -- (1,3);

%
%

         \draw [fill=gray, opacity=0.25] (1,3) --(1,4) -- (4,1) --(3,1) --cycle; 

	\end{tikzpicture}
\end{minipage}
\begin{minipage}[b]{0.45\textwidth} 
\centering
	\begin{tikzpicture}[xscale=0.5,yscale=0.5]
		\node [fill, circle, inner sep=1.5pt](b2) at (1,3) {};
		\node [fill, circle, inner sep=1.5pt](b2) at (1,4) {};

		\node [fill, circle, inner sep=1.5pt](b2) at (2,2) {};
		\node [fill, circle, inner sep=1.5pt](b2) at (2,3) {};

		\node [fill, circle, inner sep=1.5pt](b2) at (3,1) {};
		\node [fill, circle, inner sep=1.5pt](b2) at (3,2) {};
         
		\node [fill, circle, inner sep=1.5pt](b2) at (4,1) {};


      \foreach \X in {0,1,2,3,4,5}{
         \draw[dotted](\X,0) -- (\X,5);
      }

      \foreach \Y in {0,1,2,3,4,5}{
         \draw[dotted](0,\Y) -- (5,\Y);
      }



      \draw[thick](1,3) --(1,4);
      \draw[thick](2,2) --(2,3);
      \draw[thick](3,1) --(3,2);

       \draw[thick,bend angle=40, bend right](4,1) to (1,4);
       \draw[thick,bend angle=40, bend right](4,1) to (2,3);
       \draw[thick,bend angle=40, bend right](3,2) to (1,4);
       \draw[thick,bend angle=40, bend left](3,1) to (1,3);

       \draw[thick] (4,1) -- (1,4);
       \draw[thick] (3,1) -- (1,3);

%
%

         \draw [fill=gray, opacity=0.25] (1,3) --(1,4) -- (4,1) --(3,1) --cycle; 

	\end{tikzpicture}
\end{minipage}
\end{minipage}
\begin{minipage}[b]{0.45\textwidth} 
\begin{minipage}[b]{0.45\textwidth} 
\centering
	\begin{tikzpicture}[xscale=0.5,yscale=0.5]
		\node [fill, circle, inner sep=1.5pt](b2) at (1,1) {};
		\node [fill, circle, inner sep=1.5pt](b2) at (1,2) {};
		\node [fill, circle, inner sep=1.5pt](b2) at (1,3) {};
		\node [fill, circle, inner sep=1.5pt](b2) at (1,4) {};

		\node [fill, circle, inner sep=1.5pt](b2) at (2,1) {};
		\node [fill, circle, inner sep=1.5pt](b2) at (2,2) {};
		\node [fill, circle, inner sep=1.5pt](b2) at (2,3) {};

		\node [fill, circle, inner sep=1.5pt](b2) at (3,1) {};
		\node [fill, circle, inner sep=1.5pt](b2) at (3,2) {};
         
		\node [fill, circle, inner sep=1.5pt](b2) at (4,1) {};


      \foreach \X in {0,1,2,3,4,5}{
         \draw[dotted](\X,0) -- (\X,5);
      }

      \foreach \Y in {0,1,2,3,4,5}{
         \draw[dotted](0,\Y) -- (5,\Y);
      }



          \draw[thick](1,1) to (1,4);
          \draw[thick](2,1) to (2,3);
          \draw[thick](3,1) to (3,2);

          \draw[thick](1,1) to (4,1);
          \draw[thick](1,2) to (3,2);
          \draw[thick](1,3) to (2,3);

          \draw [fill=gray, opacity=0.25] (1,1) --(1,4) -- (4,1) --cycle; 

	\end{tikzpicture}
\end{minipage}
\begin{minipage}[b]{0.45\textwidth} 
\centering
	\begin{tikzpicture}[xscale=0.5,yscale=0.5]
		\node [fill, circle, inner sep=1.5pt](b2) at (1,1) {};
		\node [fill, circle, inner sep=1.5pt](b2) at (1,2) {};
		\node [fill, circle, inner sep=1.5pt](b2) at (1,3) {};
		\node [fill, circle, inner sep=1.5pt](b2) at (1,4) {};

		\node [fill, circle, inner sep=1.5pt](b2) at (2,1) {};
		\node [fill, circle, inner sep=1.5pt](b2) at (2,2) {};
		\node [fill, circle, inner sep=1.5pt](b2) at (2,3) {};

		\node [fill, circle, inner sep=1.5pt](b2) at (3,1) {};
		\node [fill, circle, inner sep=1.5pt](b2) at (3,2) {};
         
		\node [fill, circle, inner sep=1.5pt](b2) at (4,1) {};


      \foreach \X in {0,1,2,3,4,5}{
         \draw[dotted](\X,0) -- (\X,5);
      }

      \foreach \Y in {0,1,2,3,4,5}{
         \draw[dotted](0,\Y) -- (5,\Y);
      }



          \draw[thick](1,1) to (1,4);
          \draw[thick](2,1) to (2,3);
          \draw[thick](3,1) to (3,2);

          \draw[thick](1,1) to (4,1);
          \draw[thick](1,2) to (3,2);
          \draw[thick](1,3) to (2,3);

          \draw[thick,bend angle=40, bend left](1,1) to (1,4);
          \draw[thick,bend angle=25, bend left](1,1) to (1,3);
          \draw[thick,bend angle=25, bend left](1,2) to (1,4);

          \draw[thick,bend angle=40, bend right](1,1) to (4,1);
          \draw[thick,bend angle=25, bend right](1,1) to (3,1);
          \draw[thick,bend angle=25, bend right](2,1) to (4,1);

          \draw[thick,bend angle=25, bend right](1,2) to (3,2);
          \draw[thick,bend angle=25, bend left](2,1) to (2,3);

          \draw [fill=gray, opacity=0.25] (1,1) --(1,4) -- (4,1) --cycle; 

	\end{tikzpicture}
   \end{minipage}
   \end{minipage}
	\caption{\label{f:CompressedFibergraph}Compressing graphs.}
\end{figure}
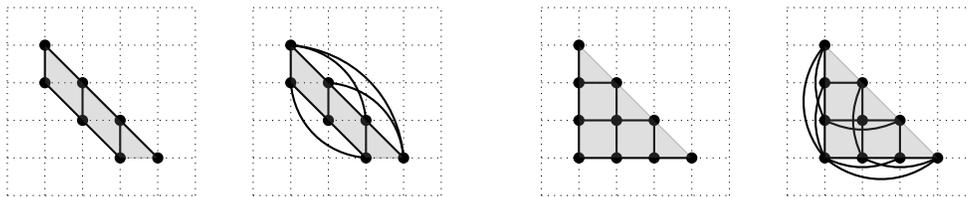

Compressing a graph $\cF(\cM)$ preserves its connectedness: $\cF(\cM)$
is connected if and only if $\cF^c(\cM)$ is connected.

\section{Bounds on the diameter}\label{s:Diameter}

In general knowledge of the diameter of the graph underlying a Markov
chain can provide information about the mixing time. For random walks
on fiber graphs, the chains which we consider, the underlying graph
coincides with the fiber graph. In this section, we determine lower
and upper bounds on the diameter of fiber graphs and their compressed
counterparts.  For a finite set
$\cM\subset\ZZ^d$ and any norm $\|\cdot\|$ on $\RR^d$, let $\|\cM\|:=\max_{m\in\cM}\|m\|$.

\begin{lemma}\label{l:LowerBound}
Let $\cF\subset\ZZ^d$ and $\cM\subset\ZZ^d$ be finite sets, then
\begin{equation*}
\diam{\cF(\cM)}\ge\frac{1}{\|\cM\|}\cdot\max\{\|u-v\|: u,v\in\cF\}.
\end{equation*}
\end{lemma}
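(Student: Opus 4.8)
The plan is to bound the $\|\cdot\|$-distance between any two nodes $u,v\in\cF$ in terms of the graph distance $\dist{\cF(\cM)}{u}{v}$, and then take the maximum over all pairs. Concretely, fix $u,v\in\cF$ with $u,v$ in the same connected component (if they are in different components the diameter is $\infty$ and there is nothing to prove, so we may assume $\cF(\cM)$ is connected, or at least restrict attention to a component). Let $k:=\dist{\cF(\cM)}{u}{v}$ and choose a shortest path $u=w_0,w_1,\dots,w_k=v$ in $\cF(\cM)$. By Definition~\ref{d:FiberGraphs}, for each $i$ the difference $w_i-w_{i-1}$ is either an element of $\cM$ or the negative of one, so in either case $\|w_i-w_{i-1}\|\le\|\cM\|$.

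The second step is a telescoping sum together with the triangle inequality for the norm $\|\cdot\|$:
\begin{equation*}
\|u-v\|=\Bigl\|\sum_{i=1}^{k}(w_i-w_{i-1})\Bigr\|\le\sum_{i=1}^{k}\|w_i-w_{i-1}\|\le k\cdot\|\cM\|=\dist{\cF(\cM)}{u}{v}\cdot\|\cM\|.
\end{equation*}
Rearranging gives $\dist{\cF(\cM)}{u}{v}\ge\|u-v\|/\|\cM\|$ for every pair $u,v$ in a common component. Taking the maximum over all pairs $u,v\in\cF$, the left-hand side is at most $\diam{\cF(\cM)}$ by definition of the diameter (here one should note that if some pair is disconnected the diameter is $\infty$, so the inequality holds trivially; and if $\cM=\emptyset$ or $\|\cM\|=0$ the right-hand side is interpreted as $+\infty$ unless $\cF$ is a single point, again consistent with the convention), which yields the claimed bound.

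There is no real obstacle here; the only points requiring a moment's care are the degenerate cases (disconnected $\cF(\cM)$, so that $\diam{\cF(\cM)}=\infty$; and $\|\cM\|=0$, where the statement is read with the convention $1/0=\infty$), and the observation that a shortest path uses at most $k$ edges each of norm-length at most $\|\cM\|$. Everything else is the triangle inequality applied to a telescoping sum. If one prefers to avoid the degenerate-case discussion entirely, one can simply state the proof for the pair $u,v$ realizing $\max\{\|u-v\|:u,v\in\cF\}$ and assume it lies in a single component, since otherwise $\diam{\cF(\cM)}=\infty$ and the inequality is vacuous.
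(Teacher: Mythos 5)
Your proof is correct and is essentially the same as the paper's: both pick a shortest path, bound each step's norm by $\|\cM\|$, apply the triangle inequality to the telescoping sum, and handle the disconnected case by noting the inequality is vacuous when the diameter is infinite. The paper simply specializes immediately to the maximizing pair $u',v'$ rather than proving the bound for all pairs and then taking the maximum, but this is a cosmetic difference.
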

\begin{proof}
If $\cF(\cM)$ is not connected, then the statement holds
trivially, so assume that $\cM$ is a Markov basis for $\cF$.  Let $u',v'\in
\cF$ such that $\|u'-v'\|=\max\{\|u-v\|: u,v\in\cF\}$ and let
$m_1,\dots,m_r\in\cM$ so that $u'=v'+\sum_{i=1}^rm_i$ is a path of
minimal length, then $\|u'-v'\|\le r\cdot\|\cM\|$ and the claim follows
from $\diam{\cF(\cM)}\ge\dist{\cF(\cM)}{u'}{v'}=r$.
\end{proof}

\begin{remark}\label{r:LatticeWidth}
Let $\cF\subset\ZZ^d$ be a normal set. For all
$l\in\{-1,0,1\}^d$ and $u,v\in\cF$ we have $(u-v)^Tl\le\|u-v\|_1$ and
thus $\mathrm{width}_l(\cF):=\max\{(u-v)^Tl:
u,v\in\cF\}\le\max\{\|u-v\|_1: u,v\in\cF\}$. 
Suppose that $u',v'\in\cF$ are such that $\|u'-v'\|_1=\max\{\|u-v\|_1:
u,v\in\cF\}$ and let
$l'_i:=\mathrm{sign}(u'_i-v'_i)$ for $i\in[d]$, then
\begin{equation*}
\|u'-v'\|_1= (u'-v')^T\cdot l'\le\mathrm{width}_{l'}(\cF)\le\max\{\|u-v\|_1:
u,v\in\cF\}=\|u'-v'\|_1.
\end{equation*}
The \emph{lattice width} of $\cF$ is
$\mathrm{width}(\cF):=\min_{l\in\ZZ^d}\mathrm{width}_l(\cF)$ and thus
Lemma~\ref{l:LowerBound} gives
\begin{equation*}
\|\cM\|_1\cdot\diam{\cF(\cM)}\ge\mathrm{width}(\cF).
\end{equation*}
\end{remark}

\begin{defn}
Let $\cP$ be a collection of finite subsets of $\ZZ^d$. A
finite set $\cM\subset\ZZ^d$ is
\emph{norm-like} for $\cP$ if there exists a constant $C\in\NN$ such
that for all $\cF\in\cP$ and all $u,v\in\cF$, $\dist{\cF(\cM)}{u}{v}\le
C\cdot\|u-v\|$. The set $\cM$ is $\|\cdot\|$-\emph{norm-reducing} for $\cP$ if for
all $\cF\in\cP$ and all $u,v\in\cF$ there exists $m\in\cM$ such that
$u+m\in\cF$ and $\|u+m-v\|<\|u-v\|$.
\end{defn}

The property of being norm-like does not depend on the norm, whereas
being norm-reducing does. Norm-reducing sets are always norm-like, and
norm-like sets are in turn always Markov bases, but the reverse of both
statements is false in general (Example~\ref{ex:NonNormLikeMB}
and Example~\ref{ex:NonNormReducingNormLike}). For collections $\cP_A$
however, every Markov basis is norm-like
(Proposition~\ref{p:EveryMBIsNormLike}).

\begin{example}\label{ex:NonNormLikeMB}
For any $n\in\NN$, consider the normal set
$\cF_n:=([2]\times[n]\times\{0\})\cup\{(2,n,1)\}$
with the Markov basis $\{(0,1,0),(0,0,1),(-1,0,-1)\}$.  The distance
between $(1,1,0)$ and $(2,1,0)$ in $\cF_n(\cM)$ is $2n$ and thus $\cM$ is
not norm-like for $\{\cF_n:n\in\NN\}$ (see also
Figure~\ref{f:NonNormLikeMB}).
\end{example}

\begin{example}\label{ex:NonNormReducingNormLike}
Let $d\in\NN$ and consider $A:=(1,\dots,1)\in\ZZ^{1\times d}$, then the
set $\cM:=\{e_1-e_i: 2\le i\le d\}$ is a Markov basis for
the collection $\cP_{A}$. However, $\cM$ is not
$\|\cdot\|_p$-norm-reducing for any $d\ge
3$ and any $p\in[1,\infty]$. For
instance, consider $e_2$ and $e_3$ in
$\fibergraph{A}{1}{\cM}$. The only move from $\cM$ that can be applied
on $e_2$ is $e_1-e_2$, but $\|(e_2+e_1-e_2)-e_3)\|_p=\|e_2-e_3\|_p$.
On the other hand,
in the case we cannot find a move that decreases the
$1$-norm of two nodes $u,v\in\fiber{A}{b}$ by $1$, we can
find instead two moves $m_1,m_2\in\cM$ such that
$u+m_1,u+m_1+m_2\in\fiber{A}{b}$ and $\|u+m_1+m_2-v\|=\|u-v\|-2$.
Thus, the graph-distance of any two elements $u$ and $v$ in
$\fibergraph{A}{b}{\cM}$ is at most $\|u-v\|_1$ and hence $\cM$ is
norm-like for $\cP_A$.
\end{example}

\begin{figure}[htbp]
\tdplotsetmaincoords{70}{110}
\begin{tikzpicture}[scale=0.6,tdplot_main_coords]

\def\x{3}
\def\y{8}
\def\z{1}

\draw[thick,->] (0,0,0) -- (\x+0.25,0,0) node[anchor=north east]{};
\draw[thick,->] (0,0,0) -- (0,\y+0.25,0) node[anchor=north west]{};
\draw[thick,->] (0,0,0) -- (0,0,\z) node[anchor=south]{};

\foreach \X in {0,...,\x}
   {\draw[dotted,color=black] (\X,0,0) --(\X,\y,0) node[anchor=north]{};}
\foreach \Y in {0,...,\y} 
   {\draw[dotted,color=black] (0,\Y,0) --(\x,\Y,0) node[anchor=north]{};}

\node[draw,circle,inner sep=0.05cm,fill=black] () at (1,1,0) {};
\node[draw,circle,inner sep=0.05cm,fill=black] () at (1,2,0) {};
\node[draw,circle,inner sep=0.05cm,fill=black] () at (1,3,0) {};
\node[draw,circle,inner sep=0.05cm,fill=black] () at (1,4,0) {};
\node[draw,circle,inner sep=0.05cm,fill=black] () at (1,5,0) {};
\node[draw,circle,inner sep=0.05cm,fill=black] () at (1,6,0) {};
\node[draw,circle,inner sep=0.05cm,fill=black] () at (1,7,0) {};
\node[draw,circle,inner sep=0.05cm,fill=black] () at (2,1,0) {};
\node[draw,circle,inner sep=0.05cm,fill=black] () at (2,2,0) {};
\node[draw,circle,inner sep=0.05cm,fill=black] () at (2,3,0) {};
\node[draw,circle,inner sep=0.05cm,fill=black] () at (2,4,0) {};
\node[draw,circle,inner sep=0.05cm,fill=black] () at (2,5,0) {};
\node[draw,circle,inner sep=0.05cm,fill=black] () at (2,6,0) {};
\node[draw,circle,inner sep=0.05cm,fill=black] () at (2,7,0) {};
\node[draw,circle,inner sep=0.05cm,fill=black] () at (2,7,1) {};

\draw[thick] (1,1,0) -- (1,7,0) ;
\draw[thick] (1,7,0) -- (2,7,1) ;
\draw[thick] (2,7,1) -- (2,7,0) ;
\draw[thick] (2,7,0) -- (2,1,0) ;

\end{tikzpicture}
\caption{The graph from
Example~\ref{ex:NonNormLikeMB}}\label{f:NonNormLikeMB}
\end{figure}
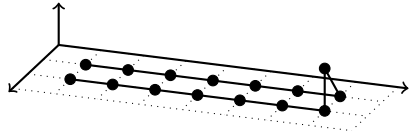

\begin{remark}\label{r:NormLikeUpperDiamBound}
Let $\cP$ be a collection of finite subsets of $\ZZ^d$ and
$\cM\subset\ZZ^d$ be norm-like for $\cP$. It follows
from the definition that there exists a constant $C\in\QQ_{\ge 0}$ such that 
for all $\cF\in\cP$
$$\diam{\cF(\cM)}\le C\cdot\max\{\|u-v\|: u,v\in\cF\}.$$
\end{remark}

The proof of our next results uses the \emph{Graver basis}
$\graver{A}\subset\ZZ^d$ for an integer matrix $A\in\ZZ^{m\times d}$
with $\ker_\ZZ(A)\cap\NN^d=\{0\}$. We refer to
\cite[Chapter~3]{Loera2013} for a precise definition.

\begin{prop}\label{p:EveryMBIsNormLike}
Let $A\in\ZZ^{m\times d}$ with $\ker_\ZZ(A)\cap\NN^d=\{0\}$ and
$\cM\subset\ker_\ZZ(A)$ be a Markov basis of $\cP_A$. Then $\cM$ is
norm-like for $\cP_A$.
\end{prop}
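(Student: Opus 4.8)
The plan is to show that a Markov basis $\cM$ for $\cP_A$, together with the Graver basis $\graver{A}$, can move any node $u$ in a fiber $\fiber{A}{b}$ toward any target node $v$ in that fiber at a rate controlled by a constant depending only on $A$ and $\cM$ (not on $b$). The key observation is that $u-v\in\ker_\ZZ(A)$, so by the defining property of the Graver basis we may write $u-v=\sum_{i=1}^k g_i$ as a sign-compatible sum of Graver basis elements $g_i\in\graver{A}$, meaning each $g_i$ agrees in sign with $u-v$ coordinatewise and $k\le\|u-v\|_1$ (in fact each partial sum lies between $0$ and $u-v$ coordinatewise). Because of sign-compatibility, $u-g_i\in\fiber{A}{b}$ for each single $g_i$: subtracting $g_i$ only decreases coordinates that were strictly larger in $u$ than in $v$, so nonnegativity is preserved. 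Thus from $u$ we can reach $u-g_i$, a point strictly closer to $v$ in the $1$-norm, by staying inside the fiber.

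Next I would pass from single Graver steps to $\cM$-paths. Fix the finite set $\graver{A}=\{g_1,\dots,g_N\}$. For each $g_j$ and each fiber element $w$ with $w-g_j\in\fiber{A}{b}$, the two points $w$ and $w-g_j$ lie in a common fiber, so since $\cM$ is a Markov basis, $\fibergraph{A}{b'}{\cM}$ is connected, where $b'=Aw$; hence there is an $\cM$-path from $w$ to $w-g_j$. The crucial point is that the \emph{length} of this path can be bounded independently of $w$ and $b'$: this is exactly the finiteness of the set of ``$\cM$-fiber distances realized between a point and itself shifted by a fixed Graver element,'' and the standard way to see it is that $w$ and $w-g_j$ both lie in the subfiber obtained by restricting to the (bounded) support where $w$ and $w-g_j$ differ, padded by the relevant coordinates — more precisely, one reduces to finitely many ``primitive'' configurations by a pigeonhole/compactness argument on the bounded region $[0,\|g_j\|_\infty]^{\,\mathrm{supp}(g_j)}$, so there are only finitely many isomorphism types of the local move problem and one can take $L:=\max_j L_j$ where $L_j$ bounds the $\cM$-distance for type $j$. (This is the analogue, for arbitrary Markov bases, of the well-known fact used for the ``$1$-norm is reduced by two moves'' argument in Example~\ref{ex:NonNormReducingNormLike}.)

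Granting such a uniform bound $L=L(A,\cM)$, the proof concludes by concatenation: decompose $u-v=\sum_{i=1}^k g_i$ sign-compatibly with $k\le\|u-v\|_1$, and realize each single-Graver step $w_{i-1}\mapsto w_{i-1}-g_{j(i)}=w_i$ (where $w_0=u$, $w_k=v$) by an $\cM$-path of length at most $L$. This yields
\begin{equation*}
\dist{\fibergraph{A}{b}{\cM}}{u}{v}\le L\cdot k\le L\cdot\|u-v\|_1,
\end{equation*}
so $\cM$ is norm-like for $\cP_A$ with constant $C=L$, and by Remark~\ref{r:NormLikeUpperDiamBound} the norm-like property is norm-independent. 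The main obstacle is establishing the uniform bound $L$ on the $\cM$-distance between $w$ and $w-g$ for a fixed Graver element $g$: one must argue that, although the ambient fiber can be arbitrarily large, the connecting $\cM$-path may be chosen to live in a bounded neighborhood of the segment from $w$ to $w-g$, or else invoke a finiteness-of-types argument; making this rigorous (rather than hand-waving "compactness") is the real content, and it is presumably where the authors will do the careful work, quite possibly by a more clever direct induction on $\|u-v\|_1$ that sidesteps the Graver decomposition in favor of producing a bounded-length $\cM$-walk that makes net progress.
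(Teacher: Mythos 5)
Your plan coincides with the paper's in its overall structure: walk from $u$ to $v$ in $\fiber{A}{b}$ using the $\|\cdot\|_1$-norm-reducing Graver basis, which takes at most $\|u-v\|_1$ steps, and then replace each single Graver step by an $\cM$-path whose length is bounded by a constant depending only on $A$ and $\cM$. But the uniform bound $L$ that you flag as ``the main obstacle'' is indeed a genuine gap in your write-up, and the devices you sketch for closing it do not obviously work. There is no reason an $\cM$-path between $w$ and $w-g$ should remain in the box $[0,\|g\|_\infty]^{\supp(g)}$: moves in $\cM$ can have supports and magnitudes unrelated to $\supp(g)$, and the detours required for connectivity may leave any fixed bounded region around the segment. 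The ``finitely many isomorphism types'' pigeonhole idea is likewise not spelled out and would need substantial work to become a proof.

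The paper closes the gap with a translation (lifting) argument that sidesteps all of this. Define $C:=\max_{g\in\graver{A}}\diam{\fibergraph{A}{Ag^+}{\cM}}$; this is finite because $\graver{A}$ is finite and each $\fiber{A}{Ag^+}$ is a finite fiber that is connected under $\cM$. Suppose $w, w-g\in\fiber{A}{b}$ with $g\in\graver{A}$. Since $w-g\ge 0$ we get $w\ge g^+$ coordinatewise. Now take any $\cM$-path $g^+=p_0,p_1,\dots,p_\ell=g^-$ in $\fibergraph{A}{Ag^+}{\cM}$ with $\ell\le C$, and translate it by the nonnegative vector $w-g^+$: each $p_j+(w-g^+)$ lies in $\NN^d$ and satisfies $A\bigl(p_j+w-g^+\bigr)=Ag^++b-Ag^+=b$, so the translated sequence is an $\cM$-path inside $\fiber{A}{b}$ from $w$ to $w-g^++g^-=w-g$ of length $\ell\le C$. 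Concatenating these replacements over the $r\le\|u-v\|_1$ Graver steps yields $\dist{\fibergraph{A}{b}{\cM}}{u}{v}\le C\,\|u-v\|_1$, which is precisely the uniform bound you wanted but did not supply a mechanism for.
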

\begin{proof}
Let $\cM$ be a Markov basis for $\cP_A$.
The Graver basis $\graver{A}$ for $A$ is a finite set which is
$\|\cdot\|_1$-norm-reducing for $\cP_A$. 
Thus, define
$C:=\max_{g\in\graver{A}}\diam{\fibergraph{A}{Ag^+}{\cM}}$.
Now, pick $u,v\in\fiber{A}{b}$ arbitrarily and let
$u=v+\sum_{i=1}^r g_i$ be a walk from $u$ to $v$ in
$\fibergraph{A}{b}{\graver{A}}$ of minimal length. Since the Graver
basis is norm-reducing for $\fiber{A}{b}$, there always exists a path
of length at most $\|u-v\|_1$ and hence $r\le\|u-v\|_1$. Every $g_i$
can be replaced by a path in $\fibergraph{A}{Ag_i^+}{\cM}$ of length
at most $C$ and these paths stay in $\fiber{A}{b}$. This gives a path
of length $C\cdot r$, hence $\dist{\fibergraph{A}{b}{\cM}}{u}{v}\le
C\|u-v\|_1$.
\end{proof}

\begin{prop}\label{p:DiamBounds}
Let $\cP\subset\ZZ^d$ be a polytope with $\dim(\cP\cap\ZZ^d)>0$ and let $\cM$ be a Markov
basis for $\cF_i:=(i\cdot\cP)\cap\ZZ^d$ for all $i\in\NN$.
There exists a constant $C'\in\QQ_{>0}$ such that for all $i\in\NN$,
$C'\cdot i\le\diam{\cF_i(\cM)}$. If $\cM$ is norm-like for $\{\cF_i:
i\in\NN\}$, then there exists a constant $C\in\QQ_{>0}$ such that 
$\diam{\cF_i(\cM)}\le C\cdot i$ for all $i\in\NN$.
\end{prop}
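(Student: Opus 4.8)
The lower bound is essentially a restatement of Lemma~\ref{l:LowerBound} together with the observation that dilation scales the diameter of the polytope linearly. Concretely, fix two points $p,q\in\cP\cap\QQ^d$ that realize the Euclidean diameter of $\cP$; since $\dim(\cP\cap\ZZ^d)>0$, the segment $[p,q]$ is nondegenerate and $\cP\cap\ZZ^d$ contains at least two distinct lattice points, so we may instead pick $u,v\in\cP\cap\ZZ^d$ with $\|u-v\|>0$. For each $i\in\NN$ we have $iu,iv\in\cF_i$ and $\|iu-iv\|=i\|u-v\|$, so Lemma~\ref{l:LowerBound} gives $\diam{\cF_i(\cM)}\ge\|\cM\|^{-1}\cdot i\|u-v\|$, and we set $C':=\|u-v\|/\|\cM\|>0$. (One should be slightly careful to ensure $iu,iv$ are among the extreme pair used, but any fixed pair of distinct lattice points in $\cP$ works, since the max in Lemma~\ref{l:LowerBound} only makes the bound larger.)

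For the upper bound, assume $\cM$ is norm-like for $\{\cF_i:i\in\NN\}$, so there is a constant $C_0\in\NN$ with $\dist{\cF_i(\cM)}{x}{y}\le C_0\|x-y\|$ for all $i$ and all $x,y\in\cF_i$. The plan is to bound $\max\{\|x-y\|: x,y\in\cF_i\}$ linearly in $i$: since $\cF_i=(i\cP)\cap\ZZ^d\subset i\cP$, any two points of $\cF_i$ differ by at most $\diam_{\|\cdot\|}(i\cP)=i\cdot\diam_{\|\cdot\|}(\cP)$. Hence $\diam{\cF_i(\cM)}\le C_0\cdot i\cdot\diam_{\|\cdot\|}(\cP)=:C\cdot i$ with $C:=C_0\,\diam_{\|\cdot\|}(\cP)>0$. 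This is exactly the content of Remark~\ref{r:NormLikeUpperDiamBound} applied to the collection $\{\cF_i\}$, combined with the elementary fact that the norm-diameter of a dilated polytope scales linearly.

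The only mild subtlety — and the one place to be careful — is that a norm-like set is by definition a Markov basis, so $\cF_i(\cM)$ is connected and all distances appearing are finite; this is needed for the upper bound to be meaningful, and is guaranteed by the hypothesis that $\cM$ is a Markov basis for every $\cF_i$. Beyond that, both inequalities are direct consequences of results already in the excerpt (Lemma~\ref{l:LowerBound} and Remark~\ref{r:NormLikeUpperDiamBound}) together with the trivial scaling $\diam_{\|\cdot\|}(i\cP)=i\,\diam_{\|\cdot\|}(\cP)$, so there is no real obstacle; the proof is short.
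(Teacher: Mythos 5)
Your proposal is correct and takes essentially the same route as the paper: for the lower bound, pick two distinct lattice points $w,w'\in\cP\cap\ZZ^d$ (which exist because $\dim(\cP\cap\ZZ^d)>0$), observe that $iw,iw'\in\cF_i$ so $\max\{\|u-v\|:u,v\in\cF_i\}\ge i\|w-w'\|$, and invoke Lemma~\ref{l:LowerBound}; for the upper bound, combine Remark~\ref{r:NormLikeUpperDiamBound} with the fact that the norm-diameter of $\cF_i\subset i\cP$ scales linearly in $i$ (the paper phrases this via the vertices of $\cP$, you via $\diam_{\|\cdot\|}(i\cP)=i\,\diam_{\|\cdot\|}(\cP)$, but these are the same estimate). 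Your closing remark about finiteness of distances is a sound point of hygiene but is automatic, since a norm-like set is in particular a Markov basis.
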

\begin{proof}
For the lower bound on the diameter, it suffices to show the existence
of $C'$ such that $C'\cdot i\le\max\{\|u-v\|: u,v\in\cF_i\}$ for all
$i\in\NN$ due to Lemma~\ref{l:LowerBound}. Since
$\dim(\cP\cap\ZZ^d)>0$, we can pick distinct $w,w'\in
\cP\cap\ZZ^d$. For all $i\in\NN$, $i\cdot w,i\cdot w'\in\cF_i$
and hence $i\cdot\|w-w'\|\le\max\{\|u-v\|: u,v\in\cF_i\}$. 

To show the upper bound, assume that $\cM$ is norm-like. It suffices
to show that there exists $C\in\QQ_{\ge 0}$ such that $\max\{\|u-v\|:
u,v\in \cF_i\}\le i\cdot C$ by Remark~\ref{r:NormLikeUpperDiamBound}.
Now, let $v_1,\ldots,v_r\in\QQ^d$ such that
$\cP=\conv{v_1,\ldots,v_r}$ and define $C:=\max\{\|v_s-v_t\|: s\neq
t\}$. 
Since $\cF_i=(i\cdot
\cP)\cap\ZZ^d\subset\conv{iv_1,\ldots,iv_r}$ for all
$i\in\NN$, we have
$\max\{\|u-v\|: u,v\in\cF_i\}\le\max\{\|iv_s-iv_t\|: s\neq
t\}\le C\cdot i$. 
\end{proof}

\begin{remark}\label{r:RaysOfMatrices}
Let $A\in\ZZ^{m\times n}$ with $\ker_\ZZ(A)\cap\NN^d=\{0\}$ and let $\cM$ be a Markov
basis for $\cP_A$. Then $\cM$ is norm-like due to
Proposition~\ref{p:EveryMBIsNormLike} and thus for all $b\in\cone{A}$
there exists $C,C'\in\QQ_{\ge 0}$ such that 
$$i\cdot C'\le\diam{\fibergraph{A}{ib}{\cM}}\le i\cdot C$$ 
for all $i\in\NN$. This generalizes for instance
\cite[Proposition~2.10]{potka2013}
and~\cite[Example~4.7]{windisch2015-mixing}, where linear diameters
on a ray in $\cone{A}$ have been observed.
This also implies that the construction of
expanders from~\cite[Section~4]{windisch2015-mixing} works for every
right-hand side $b\in\cone{A}$.
\end{remark}

\begin{remark}\label{r:Mixing}
Let $A\in\ZZ^{m\times d}$ with $\ker_\ZZ(A)\cap\NN^d=\{0\}$,
$b\in\cone{A}$, and let $\cM$ be a Markov basis
for $\cP_A$. Proposition~\ref{p:DiamBounds} provides a new proof that
the simple walk on $(\fibergraph{A}{ib}{\cM})_{i\in\NN}$ cannot mix
rapidly. The lower bound on the diameter from Proposition~\ref{p:DiamBounds}
implies, in general, the following upper bound on the edge-expansion
(see for example~\cite[Proposition~1.30]{gardam2012}):
\begin{equation*}
h(\fibergraph{A}{i\cdot b}{\cM})\le
|\cM|\left(\exp\left(\frac{\log|\fiber{A}{i\cdot b}|}{D\cdot
i}\right)-1\right).
\end{equation*}
In particular, the edge-expansion cannot be bounded from below by
$\Omega(\frac{1}{p(i)})_{i\in\NN}$ for a polynomial $p\in\QQ[t]$ and
since  $(|\fiber{A}{i\cdot
b}|)_{i\in\NN}\in\mathcal{O}(i^r)_{i\in\NN}$, the simple walk cannot
mix rapidly. In~\cite{windisch2015-mixing}, it was shown that the
edge-expansion can be bounded from above by
$\mathcal{O}(\frac{1}{i})_{i\in\NN}$, which cannot be concluded from
the upper expression. 
\end{remark}

We now turn our attention to the diameter of compressed fiber graphs.
In particular, we want to know for which collections of normal sets is
their diameter bounded. In general, compressing a fiber
graph does not necessarily have an effect on the diameter
(Example~\ref{ex:StairCase}). 

Although a low diameter is a necessary
condition for good mixing, it is not sufficient.  For instance, let
$G_n$ be the disjoint union of two complete graphs $K_n$ connected by
a single edge. Then $\diam{G_n}=3$, but $h(G_n)\le\frac{1}{n}$ implies
that the simple walk does not mix rapidly. 

\begin{example}\label{ex:StairCase}
For any $n\in\NN$, let
$\cF_n:=\{(0,0),(0,1),(1,1),(1,2),\dots,(n,n)\}\subset\ZZ^2$. The
unit vectors $\cM=\{e_1,e_2\}$ are a Markov basis for
$\{\cF_n:n\in\NN\}$. However,
$\cF_n^c(\cM)=\cF_n(\cM)$ and thus $\diam{\cF^c_n(\cM)}=\diam{\cF_n(\cM)}=2n$
is unbounded.
\end{example}

\begin{lemma}\label{l:SignCompatibility}
Let $A\in\ZZ^{m\times d}$ and $z\in\ker_\ZZ(A)$. There exists
$r\in[2d-2]$, distinct elements $g_1,\dots,g_r\in\cG_A$, and
$\lambda_1,\dots,\lambda_r\in\NN_{>0}$ such that
$z=\sum_{i=1}^r\lambda_ig_i$ and $g_i\sqsubseteq z$ for all $i\in[r]$
\end{lemma}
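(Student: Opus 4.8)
The plan is to invoke the standard conformal (sign-compatible) decomposition of kernel vectors into Graver basis elements and then control the number of distinct summands via a Carathéodory-type bound.

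First I would recall the basic fact about the Graver basis: for any $z\in\ker_\ZZ(A)$ there is an identity $z=\sum_{i=1}^k\lambda_i g_i$ with $\lambda_i\in\NN_{>0}$, $g_i\in\cG_A$ distinct, and $g_i\sqsubseteq z$ for every $i$ (all summands lie in the same orthant as $z$). This is the defining property of $\cG_A$ together with the fact that conformal minimality relative to $z$ forces $g_i\sqsubseteq z$; equivalently one takes a conformal sum $z=\sum h_j$ with $h_j$ conformal to $z$ and $\sum\|h_j\|_1$ minimal, each $h_j$ is then in $\cG_A$, and collecting equal terms gives the $\lambda_i$. So the only thing left to prove is the cardinality bound $r\le 2d-2$ on the number of \emph{distinct} Graver elements used.

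For the bound on $r$ I would apply the integer Carathéodory theorem for Graver bases (the Sebő/Eisenbrand–Shmonin line of results, cf.\ \cite[Chapter~3]{Loera2013}): any $z\in\ker_\ZZ(A)$ lying in a fixed orthant can be written as a nonnegative integer combination of at most $2d-2$ elements of $\cG_A$, all conformal to $z$. The key point is that once we restrict to the orthant determined by $\sign(z)$, the relevant Graver elements $\{g\in\cG_A: g\sqsubseteq z\}$ all lie in a pointed cone of dimension at most $d$ (in fact at most $\rank(A)$-codimension inside that orthant, but $d$ suffices), and $z$ is an \emph{integer} point of that cone; the integer Carathéodory bound for such cones generated by a Graver/Hilbert-type system is $2d-2$ rather than the naive $d$. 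I would cite this result and extract exactly the decomposition claimed, noting that distinctness of the $g_i$ is achieved by merging repeated generators into the coefficient $\lambda_i$, which does not increase the count.

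The main obstacle is getting the constant right: a plain real Carathéodory argument only yields $d$ generators but loses integrality of the coefficients, while naive integer rounding can blow up the number of terms. The sharp value $2d-2$ is exactly the content of the integer Carathéodory theorem for Graver bases, so the proof really reduces to quoting that theorem correctly and checking its hypotheses (pointedness of the orthant cone, and that $\cG_A$ restricted to that orthant is the Graver basis of the induced system). I would therefore structure the write-up as: (i) reduce to a fixed orthant by replacing $A$ with the matrix acting on that orthant and $z$ by $|z|$; (ii) observe the conformal Graver elements span a pointed rational cone of dimension $\le d$ containing the integer point $z$; (iii) apply the integer Carathéodory bound to obtain $\le 2d-2$ distinct generators with positive integer coefficients; (iv) translate back, noting sign-compatibility $g_i\sqsubseteq z$ is preserved throughout.
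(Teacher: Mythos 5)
Your approach matches the paper's own proof, which is simply a citation to Lemma~3.2.3 of De Loera, Hemmecke, and K\"oppe (the integer Carath\'eodory bound $2d-2$ for Graver bases, going back to Seb\H{o}'s Theorem~2.1) together with the remark that distinctness of the $g_i$ follows from Seb\H{o}'s original argument. Your merging observation---combining repeated generators into a single positive coefficient, which can only decrease the number of distinct summands---is a clean alternative way to guarantee distinctness without inspecting Seb\H{o}'s proof, so the proposal is correct and essentially the same.
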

\begin{proof}
This is \cite[Lemma~3.2.3]{Loera2013}, although it only becomes clear
from the original proof of~\cite[Theorem~2.1]{Sebo1990} that the
appearing elements are all distinct.
\end{proof}

\begin{prop}\label{p:DiamCompressedGraverFiberGraphs}
Let $A\in\ZZ^{m\times d}$ and
$\cP:=\left\{\{x\in\ZZ^d: Ax=b, l\le x\le u\}:
l,u\in\ZZ^d,b\in\ZZ^m\right\}$.
Then for all $\cF\in\cP$, 
$\diam{\cF^c(\graver{A})}\le 2d-2$.
\end{prop}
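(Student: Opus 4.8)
The key is Lemma~\ref{l:SignCompatibility}: any lattice vector $z\in\ker_\ZZ(A)$ decomposes as $z=\sum_{i=1}^r\lambda_i g_i$ with $r\le 2d-2$, the $g_i\in\graver{A}$ \emph{distinct} and all sign-compatible with $z$ (i.e. $g_i\sqsubseteq z$). Given $\cF=\{x\in\ZZ^d:Ax=b,\ l\le x\le u\}\in\cP$ and two nodes $x,y\in\cF$, I would apply this lemma to $z:=y-x\in\ker_\ZZ(A)$. The plan is to show that the path
\[
x,\quad x+g_1,\quad x+g_1+g_2,\quad \dots,\quad x+\sum_{i=1}^r g_i
\]
(visiting the partial sums of $g_1,\dots,g_r$ in order, but using each $g_i$ only \emph{once}, not $\lambda_i$ times) is a walk in $\cF^c(\graver{A})$ from $x$ to some node $x'$, and then to observe that $x'$ and $y$ still differ by a lattice vector of the form $\sum_i(\lambda_i-1)g_i$ which is again sign-compatible — so the argument can be iterated. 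The crucial point making this work in the \emph{compressed} graph is that consecutive partial sums differ by $g_i\in\graver{A}\subset\ZZ\cdot\graver{A}$, so each step is a legal edge of $\cF^c(\graver{A})$ provided the intermediate points lie in $\cF$.

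First I would check the boundedness of the intermediate points. Because each $g_i\sqsubseteq z=y-x$, in every coordinate $j$ the number $(g_i)_j$ has the same sign as $y_j-x_j$ (or is zero); hence the partial sums $x+\sum_{i\le k}g_i$ move monotonically, coordinatewise, from $x$ toward $y$ and never overshoot $y$. Since $l\le x\le u$ and $l\le y\le u$, monotone interpolation stays within $[l,u]$, so all partial sums satisfy the box constraints; and they satisfy $A(\cdot)=b$ because each $g_i\in\ker_\ZZ(A)$. Thus the displayed path lies entirely in $\cF$, i.e. it is a genuine walk in $\cF^c(\graver{A})$ of length $r\le 2d-2$ from $x$ to $x'=x+\sum_i g_i$.

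To finish, note $y-x'=\sum_{i=1}^r(\lambda_i-1)g_i$. If all $\lambda_i=1$ then $x'=y$ and we are done with a path of length $\le 2d-2$. Otherwise one is tempted to recurse on $y-x'$; but doing so naively would give a path length proportional to $\max_i\lambda_i$, not a bound of $2d-2$. The honest fix — and this is the step I expect to be the main obstacle — is to not peel off one copy of each $g_i$ but instead to use the \emph{full} multiplicities at once: for each $i$, since $g_i\sqsubseteq z$, the jump $x+\sum_{j<i}\lambda_j g_j \;\to\; x+\sum_{j\le i}\lambda_j g_j$ differs exactly by $\lambda_i g_i\in\ZZ\cdot g_i\subset\ZZ\cdot\graver{A}$, which is a \emph{single} edge of the compressed graph $\cF^c(\graver{A})$. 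The same coordinatewise-monotonicity argument as above shows every one of these $r$ partial sums $x+\sum_{j\le i}\lambda_j g_j$ lies in $[l,u]$ and in $\ker$-fiber, hence in $\cF$. This yields a walk
\[
x=x_0,\ x_1,\ \dots,\ x_r=y,\qquad x_i:=x+\sum_{j\le i}\lambda_j g_j,
\]
of length exactly $r\le 2d-2$ in $\cF^c(\graver{A})$, so $\dist{\cF^c(\graver{A})}{x}{y}\le 2d-2$. Taking the maximum over all $x,y\in\cF$ gives $\diam{\cF^c(\graver{A})}\le 2d-2$. The only subtlety to write carefully is that "$g_i\sqsubseteq z$ for all $i$'' really does force coordinatewise monotonicity of these particular partial sums — which follows since all terms $\lambda_j(g_j)_k$ for a fixed coordinate $k$ have the common sign $\mathrm{sign}(z_k)$, so each partial sum lies between $x_k$ and $x_k+z_k=y_k$.
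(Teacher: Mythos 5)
Your proof is correct and takes essentially the same route as the paper: both apply Lemma~\ref{l:SignCompatibility} to write $y-x=\sum_{i=1}^r\lambda_i g_i$ with $r\le 2d-2$ distinct, sign-compatible Graver elements, and both then check that the partial sums $x+\sum_{j\le i}\lambda_j g_j$ stay in $\cF$, so the length-$r$ walk (each step a single compressed edge $\lambda_i g_i\in\ZZ\cdot\graver{A}$) gives the bound. The only difference is cosmetic: where you spell out the coordinatewise-monotonicity argument for why the intermediate points remain in the box $[l,u]$, the paper simply cites \cite[Lemma~3.2.4]{Loera2013} for that fact. (Your opening detour about peeling off one copy of each $g_i$ and recursing is a dead end you correctly abandon; it could just be deleted.)
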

\begin{proof} Let $s,t\in\{x\in\ZZ^d: Ax=b, l\le x\le u\}$, then
$s-t\in\ker_\ZZ(A)$ and thus $s=t+\sum_{i=1}^r\lambda_ig_i$ with $r\le
2d-2$, $\lambda_1,\dots,\lambda_r\in\NN_{>0}$, and distinct
$g_1,\dots,g_r\in\graver{A}$ such that $g_i\sqsubseteq s-t$ according
to Lemma~\ref{l:SignCompatibility}. It's now a
consequence from~\cite[Lemma~3.2.4]{Loera2013} that all intermediate
points $t+\sum_{i=1}^k\lambda_ig_i$ for $k\le r$ are in $\{x\in\ZZ^d:
Ax=b, l\le x\le u\}$.
\end{proof}

\begin{lemma}\label{l:DistOfScaledMoves}
Let $\cF\subset\ZZ^d$ be finite and let 
$\cF_i:=(i\cdot\conv{\cF})\cap\ZZ^d$ for $i\in\NN$. For all $u,v\in\cF$,
$\dist{\cF^c_i(\cM)}{iu}{iv}\le\dist{\cF(\cM)}{u}{v}$ for all
$i\in\NN$.
\end{lemma}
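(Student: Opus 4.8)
The plan is to produce, from any walk between $u$ and $v$ in $\cF(\cM)$, a corresponding walk between $iu$ and $iv$ in $\cF^c_i(\cM)$ that is no longer. First I would take a shortest walk $u = w_0, w_1, \dots, w_r = v$ in $\cF(\cM)$, so that $r = \dist{\cF(\cM)}{u}{v}$ and each consecutive difference $w_{j+1} - w_j$ lies in $\pm\cM$. Scaling by $i$, the points $iw_0, iw_1, \dots, iw_r$ all lie in $i\cdot\conv{\cF}$ and in $\ZZ^d$, hence in $\cF_i$; and each difference $iw_{j+1} - iw_j = i\cdot(w_{j+1}-w_j)$ is an integer multiple of an element of $\pm\cM$, hence lies in $\ZZ\cdot\cM$. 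So $iw_0, \dots, iw_r$ is a walk of length $r$ in $\cF_i(\ZZ\cdot\cM) = \cF^c_i(\cM)$ from $iu$ to $iv$, giving $\dist{\cF^c_i(\cM)}{iu}{iv}\le r = \dist{\cF(\cM)}{u}{v}$.

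The one point that needs a line of justification is that each scaled intermediate point $iw_j$ actually lies in $\cF_i$: this uses exactly that $\cF$ is contained in $\conv{\cF}$ (trivially), so $w_j \in \conv{\cF}$, hence $iw_j \in i\cdot\conv{\cF}$, and since $w_j\in\ZZ^d$ we get $iw_j\in(i\cdot\conv{\cF})\cap\ZZ^d = \cF_i$. The other point is that a single move of $\cF(\cM)$, namely an edge with difference $m\in\pm\cM$, becomes under scaling a difference $im\in\ZZ\cdot m\subset\ZZ\cdot\cM$, so it is still a single edge of the compressed graph $\cF^c_i(\cM)$ — in fact this is the whole reason compression helps, since one long jump of length $i\|m\|$ in $\cF_i$ now costs only one step. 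There is no real obstacle here; the statement is essentially bookkeeping once one observes that compression turns the scaled-by-$i$ version of an $\cM$-move into a single $\ZZ\cdot\cM$-move.

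If $\cF(\cM)$ is disconnected and $u,v$ lie in different components, then $\dist{\cF(\cM)}{u}{v}=\infty$ and the inequality holds trivially, so one may assume $u$ and $v$ are joined by a finite walk; I would mention this edge case in one clause. Altogether the proof is just: pick a geodesic in $\cF(\cM)$, multiply every vertex by $i$, check the scaled sequence is a walk in $\cF^c_i(\cM)$ of the same length, and read off the distance bound.
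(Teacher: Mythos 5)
Your proposal is correct and follows essentially the same route as the paper's proof: take a geodesic in $\cF(\cM)$, multiply every vertex by $i$, observe that each scaled vertex lies in $i\cdot\cF\subseteq(i\cdot\conv{\cF})\cap\ZZ^d=\cF_i$ and each scaled step $i\cdot m$ is still a single edge of $\cF^c_i(\cM)$ since $i\cdot m\in\ZZ\cdot\cM$, then read off the length bound. Your handling of the disconnected case and your explicit note that $\cF\subset\conv{\cF}$ is the point to check are both in line with the paper's argument.
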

\begin{proof}
The statement is trivially true if $u$ and $v$ are disconnected in
$\cF(\cM)$. Thus, assume the contrary and let $u=v+\sum_{j=1}^k m_j$
with $m_j\in\cM$ be a path in $\cF(\cM)$ of length
$k=\dist{\cF(\cM)}{u}{v}$ and let $i\in\NN$. Clearly, $i\cdot u=i\cdot
v+i\cdot\sum_{j=1}^km_j=i\cdot v+ \sum_{l=1}^ki\cdot m_j$, so it is
left to prove that the elements traversed by this paths are in $\cF_i$.
Let $l\in[k]$, since $v+\sum_{j=1}^lm_j\in\cF$, we have $i\cdot v+\sum_{j=1}^l
i\cdot m_j\in i\cdot \cF\subseteq\cF_i$. Hence, this is a path in
$\cF^c_i(\cM)$ of length $k=\dist{\cF(\cM)}{u}{v}$.
\end{proof}

We are ready to prove that the diameter of compressed fiber graphs
coming from an integer matrix can be bounded for all right-hand sides
simultaneously.

\begin{thm}\label{t:DiameterCompressedFiberGraphs}
Let $A\in\ZZ^{m\times d}$ with $\ker_\ZZ(A)\cap\NN^d=\{0\}$ and let
$\cM$ be a Markov basis for $\cP_A$.  There exists a constant
$C\in\NN$ such that $\diam{\cF^c(\cM)}\le C$ for all $\cF\in\cP_A$.
\end{thm}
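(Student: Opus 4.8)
The plan is to reduce the statement about arbitrary fibers $\cF_b = \fiber{A}{b}$ to the finitely many fibers whose right-hand sides are ``small'' and then invoke the bounds already developed. Concretely, I would first invoke Proposition~\ref{p:DiamCompressedGraverFiberGraphs}: for the Graver basis $\graver{A}$, every compressed fiber graph $\cF_b^c(\graver{A})$ has diameter at most $2d-2$, uniformly in $b$. So it suffices to replace each Graver step by a bounded-length path using the given Markov basis $\cM$, in the compressed graph, and to control the number of such replacements independently of $b$.

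\textbf{Key steps.} Fix $u,v\in\cF_b$. By Lemma~\ref{l:SignCompatibility} (applied via Proposition~\ref{p:DiamCompressedGraverFiberGraphs}), write $u = v + \sum_{i=1}^r \lambda_i g_i$ with $r\le 2d-2$, $\lambda_i\in\NN_{>0}$, distinct $g_i\in\graver{A}$, each $g_i\sqsubseteq u-v$, and with all intermediate points lying in $\cF_b$. Each single move ``$+\lambda_i g_i$'' connects two lattice points $p,\,p+\lambda_i g_i$ of $\cF_b$ whose difference is a multiple of a fixed Graver element; I want to bound $\dist{\cF_b^c(\cM)}{p}{p+\lambda_i g_i}$ by a constant depending only on $A$ and $\cM$, not on $\lambda_i$ or $b$. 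This is the crux. The natural route: the segment $\{p, p+g_i, p+2g_i, \dots, p+\lambda_i g_i\}$ need \emph{not} lie in $\cF_b$, but the two endpoints do, and their difference is $\lambda_i g_i$ where $g_i$ is one of finitely many vectors. For each fixed Graver element $g$, consider the $2$-dimensional-fiber-like situation: the set of pairs $(p,\lambda)$ with $p, p+\lambda g\in\cF_b$. One can translate so that, after subtracting a suitable lattice point, the pair $(p, p+\lambda g)$ ``looks like'' $(w, w')$ inside a bounded region plus a multiple of $g$; here Lemma~\ref{l:DistOfScaledMoves} is the right tool — it says that dilating a configuration and its connecting $\cM$-path by a factor $i$ gives a path in the compressed dilated graph of the \emph{same} length. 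More precisely, I would show that the segment from $p$ to $p+\lambda g$ in $\cF_b^c(\cM)$ can be routed, using $\cM$ as a Markov basis, through points that ``shadow'' a fixed finite fiber; since $\cM$ is a Markov basis (hence norm-like for $\cP_A$ by Proposition~\ref{p:EveryMBIsNormLike}), there is a path in the uncompressed graph of length $O(\|\,\cdot\,\|_1)$, and compressing only shortens it — but the length could still grow with $\lambda$ unless we exploit that $g$ itself is an available move (it is an element of $\ker_\ZZ(A)$, so some bounded-length $\cM$-walk realizes $g$, and its scaling $\lambda g$ becomes a \emph{single} compressed edge or a bounded concatenation). Collecting: each of the $r\le 2d-2$ Graver steps costs at most a constant $C_0 = C_0(A,\cM)$ compressed-$\cM$-edges, so $\dist{\cF_b^c(\cM)}{u}{v}\le (2d-2)\,C_0 =: C$.

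\textbf{Main obstacle.} The delicate point is the uniform bound on $\dist{\cF_b^c(\cM)}{p}{p+\lambda g}$ as $\lambda\to\infty$ and $b$ varies. One must argue that although the "straight" path $p, p+g, \dots, p+\lambda g$ may exit $\cF_b$, there is an $\cM$-path in $\cF_b$ of length bounded independently of $\lambda$ once edges are allowed to be integer multiples of moves. The clean way is: since $\cM$ is a Markov basis and $g\in\ker_\ZZ(A)$ with $g\sqsubseteq u-v$, the intermediate Graver-walk points stay in the "interval" $[v\wedge u,\; v\vee u]$ in each coordinate, so $p$ and $p+\lambda g$ lie in a common fiber of $A$ restricted to that box; apply Proposition~\ref{p:DiamCompressedGraverFiberGraphs}-style reasoning \emph{within that box} to get a $\graver{A}$-path of length $\le 2d-2$ there, then replace each of \emph{those} Graver steps by a bounded compressed-$\cM$-path using a finiteness/compactness argument over the finitely many pairs (Graver element, residue class of starting point modulo that element) — here Lemma~\ref{l:DistOfScaledMoves} upgrades a fixed finite configuration to all its dilates without length blow-up. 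I expect the write-up to require care in making the "bounded box, then dilate" reduction precise, but no genuinely new idea beyond the lemmas already in hand.
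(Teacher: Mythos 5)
Your proposal follows essentially the same route as the paper's: Graver-decompose $u-v$ into at most $2d-2$ sign-compatible summands via Lemma~\ref{l:SignCompatibility} and Proposition~\ref{p:DiamCompressedGraverFiberGraphs}, then replace each Graver step $p\to p+\lambda g$ by a compressed-$\cM$-path of bounded length, with finiteness of $\graver{A}$ supplying a uniform constant. The germ of the crucial step is there in your remark that ``some bounded-length $\cM$-walk realizes $g$, and its scaling becomes a bounded concatenation.'' But your ``Main obstacle'' paragraph retreats from that into a box-restricted Graver re-decomposition and a classification by ``residue class of starting point modulo that element,'' neither of which the paper needs. As written that detour produces new Graver steps with new multipliers and does not obviously terminate, and the residue-class bookkeeping is not a well-defined finite index set. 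More to the point, you do not need to re-decompose at all: $\lambda g$ is already a multiple of a \emph{single} Graver element, which is exactly the situation Lemma~\ref{l:DistOfScaledMoves} handles in one stroke.

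The step you should make precise is the one you gestured at. For each $g\in\graver{A}$ set $\cF_g:=\fiber{A}{Ag^+}$ and let $K:=\max_{g\in\graver{A}}\dist{\cF_g(\cM)}{g^+}{g^-}$, which is finite because $\graver{A}$ is finite and $\cM$ is a Markov basis for $\cP_A$. Apply Lemma~\ref{l:DistOfScaledMoves} with $\cF=\cF_g$, $u=g^+$, $v=g^-$, $i=\lambda$: the $\cM$-path realizing $g$ inside $\cF_g$ scales to a compressed-$\cM$-path of the same length $\le K$ from $\lambda g^-$ to $\lambda g^+$, with all intermediate points in $(\lambda\conv{\cF_g})\cap\ZZ^d\subseteq\fiber{A}{\lambda Ag^+}$. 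Translating by $p-\lambda g^-$ sends the endpoints to $p$ and $p+\lambda g$ and keeps every intermediate point in $\fiber{A}{b}$: the $A$-value is unchanged since $Ag^+=Ag^-$, and nonnegativity holds because $p,\,p+\lambda g\in\NN^d$ already force $p\ge\lambda g^-$ coordinatewise, so adding $p-\lambda g^-\ge 0$ to the nonnegative scaled path stays nonnegative. Summing over the $\le 2d-2$ Graver steps gives $C=(2d-2)K$, with no need for an auxiliary box or residues.
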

\begin{proof}
Our proof relies on basic properties of the Graver basis $\cG_A$ of
$A$. For any $g\in\graver{A}$, let $\cF_g:=\fiber{A}{Ag^+}$
and let
$K:=\max\{\dist{\cF_g(\cM)}{g^+}{g^-}: g\in\graver{A}\}$. We
show that the diameter of any compressed fiber graph of $A$ is
bounded from above by $(2d-2)\cdot K$. 
Let $b\in\cone{A}$ arbitrary and choose elements $u,v\in\fiber{A}{b}$. 
According to Proposition~\ref{p:DiamCompressedGraverFiberGraphs},
there exists $r\in[2d-2]$, 
$g_1,\ldots,g_{r}\in\graver{A}$ and
$\lambda_1,\ldots,\lambda_{r}\in\ZZ$ such that
$u=v+\sum_{i=1}^{r}\lambda_i g_i$, and
$v+\sum_{i=1}^l\lambda_ig_i\in\NN^d$ for all $l\in[r]$. According to
Lemma~\ref{l:DistOfScaledMoves}, for any $i\in[r]$ there are
$m_1^i,\ldots,m_{k_i}^i\in\cM$ and
$\alpha_1,\ldots,\alpha_{k_i}\in\ZZ$ such that
$\lambda_ig_i^+=\lambda_ig_i^-+\sum_{j=1}^{k_i}\alpha_j m_j^i$ is a
path in the compression of $\fiber{A}{A\lambda_ig_i^+}(\cM)$ of length $k_i\le K$. Lifting these
paths for every $i\in[r]$ yields a path
$u=v+\sum_{i=1}^{r}\sum_{j=1}^{k_i}\alpha_jm_j^i$
in $\scfibergraph{A}{b}{\cM}$ of length $r\cdot K\le(2d-2)\cdot K$.
\end{proof}

\section{Heat-bath random walks}\label{s:HeatBath}

In this section, we establish the heat-bath random walk on compressed
fiber graphs. We refer to~\cite{Dyer2014} for a more general
introduction on random walks of heat-bath type. Let
$\cF\subset\ZZ^d$ be finite set. For any $u\in\cF$ and
$m\in\ZZ^d$, the
ray in $\cF$ through $u$ along $m$ is denoted by
$\polyray{u}{m}{\cF}:=(u+m\cdot\ZZ)\cap\cF$. 
Additionally, given a mass function
$\pi:\cF\to[0,1]$, we define
\begin{equation*}
\heatbathmove{\pi}{\cF}{m}(x,y):=\begin{cases}
\frac{\pi(y)}{\pi(\polyray{x}{m}{\cF})}&,\text{ if
}y\in\polyray{x}{m}{\cF}\\
0&,\text{ otherwise}
\end{cases}
\end{equation*}
for $x,y\in\cF$. For $\cM\subset\ZZ^d$ and a mass function $f:\cM\to[0,1]$, the
\emph{heat-bath random walk} is 
\begin{equation}\label{equ:HeatBath}
\heatbath{\pi}{f}{\cF}{\cM}=\sum_{m\in\cM}f(m)\cdot\heatbathmove{\pi}{\cF}{m}.
\end{equation}
The underlying graph of the heat-bath random walk is the compression
$\cF^c(\cM)$ and in this section, we assume throughout that for all
$m\in\cM$ and $\lambda\in\ZZ\setminus\{-1,1\}$, $\lambda\cdot
m\not\in\cM$. Let us first recall the basic properties of this random
walk (compare also to~\cite[Lemma~2.2]{Diaconis1998a}).

\begin{algorithm}[h]
\caption{Heat-bath random walk on compressed fiber
graphs}\label{a:HeatBath}
\begin{algorithmic}[1]
\Require{$\cF\subset\ZZ^d$, $\cM\subset\ZZ^d$,
$v\in\cF$, mass functions
$f:\cM\to[0,1]$ and $\pi:\cF\to[0,1]$, $r\in\NN$}

\Procedure{HeatBath:}{}
\State $v_0:=v$  
\State \texttt{FOR} $s=0$; $s=s+1$, $s<r$  
\State\hspace{\algorithmicindent}Sample $m\in\cM$ according to $f$
\State\hspace{\algorithmicindent}Sample
$v_{s+1}\in\polyray{v_{s}}{m}{\cF}$ according to $\polyray{v_{s}}{m}{\cF}\to[0,1]$,
$y\mapsto\frac{\pi(y)}{\pi(\polyray{v_{s}}{m}{\cF})}$
\State \texttt{RETURN} $v_1,\dots,v_r$
\EndProcedure
\end{algorithmic}
\end{algorithm}

\begin{prop}\label{p:HeatBath}
Let $\cF\subset\ZZ^d$ and $\cM\subset\ZZ^d$ be finite sets. Let
$f:\cM\to[0,1]$ and $\pi:\cF\to(0,1)$ be mass functions. Then
$\heatbath{\pi}{f}{\cF}{\cM}$ is aperiodic, has stationary
distribution $\pi$, is reversible with respect to $\pi$, and all of
its eigenvalues are non-negative. The random walk is irreducible if
and only if $\{m\in\cM: f(m)>0\}$ is a Markov basis for $\cF$.
\end{prop}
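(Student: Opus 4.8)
The plan is to verify each assertion essentially by unwinding the definitions in~\eqref{equ:HeatBath} and reducing everything to the single-move operators $\heatbathmove{\pi}{\cF}{m}$, since a convex combination (with weights $f(m)$) of stochastic, $\pi$-reversible, non-negative-definite operators has the same properties. So the core of the proof is to establish the claims for a fixed $m\in\cM$ and then average.

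First I would check that each $\heatbathmove{\pi}{\cF}{m}$ is a genuine random walk on $\cF^c(\cM)$: for $x\in\cF$, the ray $\polyray{x}{m}{\cF}=(x+m\ZZ)\cap\cF$ is finite and nonempty (it contains $x$), so $\pi(\polyray{x}{m}{\cF})>0$ since $\pi$ is strictly positive, and $\sum_{y}\heatbathmove{\pi}{\cF}{m}(x,y)=\sum_{y\in\polyray{x}{m}{\cF}}\pi(y)/\pi(\polyray{x}{m}{\cF})=1$; moreover $\heatbathmove{\pi}{\cF}{m}(x,y)\neq 0$ forces $y-x\in\ZZ\cdot m\subset\ZZ\cdot\cM$, so the support lies on edges (or loops) of $\cF^c(\cM)$. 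Summing over $m$ with weights $f(m)$ preserves this, giving the first sentence's well-definedness. Reversibility with respect to $\pi$ for a single move is the key computation: for $x,y$ on a common ray $R:=\polyray{x}{m}{\cF}$ (note $\polyray{y}{m}{\cF}=R$ as well), $\pi(x)\heatbathmove{\pi}{\cF}{m}(x,y)=\pi(x)\pi(y)/\pi(R)=\pi(y)\heatbathmove{\pi}{\cF}{m}(y,x)$, and if $x,y$ lie on no common ray both sides vanish; averaging over $m$ keeps the detailed-balance identity, so $\heatbath{\pi}{f}{\cF}{\cM}$ is reversible w.r.t.\ $\pi$, hence $\pi$ is stationary. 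For non-negativity of the spectrum I would observe that each $\heatbathmove{\pi}{\cF}{m}$ is, up to the reversibility conjugation $D_\pi^{1/2}\heatbathmove{\pi}{\cF}{m}D_\pi^{-1/2}$ (with $D_\pi=\mathrm{diag}(\pi)$), an orthogonal projection onto the space of functions constant on each ray of the move $m$ — indeed $\heatbathmove{\pi}{\cF}{m}$ replaces a function by its $\pi$-conditional expectation given the ray-partition, so $(\heatbathmove{\pi}{\cF}{m})^2=\heatbathmove{\pi}{\cF}{m}$ — so its eigenvalues are in $\{0,1\}$; a convex combination of symmetric positive-semidefinite matrices (after the common conjugation) is positive semidefinite, so all eigenvalues of $\heatbath{\pi}{f}{\cF}{\cM}$ lie in $[0,1]$. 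Aperiodicity is then immediate: $\lambda_1=1$ is an eigenvalue and there is no eigenvalue $-1$, or more directly, for any $x$ the loop probability is $\heatbath{\pi}{f}{\cF}{\cM}(x,x)\ge f(m)\pi(x)/\pi(\polyray{x}{m}{\cF})>0$ for each $m$ with $f(m)>0$.

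For the final equivalence, write $\cM_f:=\{m\in\cM: f(m)>0\}$. If $\cM_f$ is a Markov basis for $\cF$ then $\cF(\cM_f)$ is connected, hence so is its compression $\cF^c(\cM_f)$, and since $\heatbath{\pi}{f}{\cF}{\cM}(x,y)>0$ whenever $x,y$ lie on a common $m$-ray for some $m\in\cM_f$ — in particular whenever $\{x,y\}$ is an edge of $\cF(\cM_f)$, because $y=x+m$ puts $y$ on $\polyray{x}{m}{\cF}$ — a path in $\cF(\cM_f)$ lifts to a sequence of positive transitions, giving irreducibility. Conversely, if $\heatbath{\pi}{f}{\cF}{\cM}$ is irreducible, then for any $x,y$ there is $t$ with $\heatbath{\pi}{f}{\cF}{\cM}^t(x,y)>0$, which by expanding the $t$-fold product forces a chain $x=x_0,\dots,x_t=y$ with consecutive differences in $\ZZ\cdot\cM_f$; this exhibits a path between $x$ and $y$ in $\cF^c(\cM_f)=\cF(\ZZ\cdot\cM_f)$, so that graph is connected, and therefore so is $\cF(\cM_f)$, i.e.\ $\cM_f$ is a Markov basis. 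I expect the main obstacle to be stating cleanly the projection interpretation of $\heatbathmove{\pi}{\cF}{m}$ (that it is conditional expectation onto the $\sigma$-algebra generated by the ray-partition of the move $m$), as this is what simultaneously yields idempotency, self-adjointness in $\ell^2(\pi)$, and the $\{0,1\}$ spectrum; everything else is bookkeeping over the convex combination and the connectivity translations already licensed by the remark that compression preserves connectedness.
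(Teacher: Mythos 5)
Your proof is correct, and on most points it follows the same route as the paper: aperiodicity from positive loop probability, reversibility via the detailed-balance identity $\pi(x)\heatbathmove{\pi}{\cF}{m}(x,y)=\pi(y)\heatbathmove{\pi}{\cF}{m}(y,x)$, and the irreducibility equivalence via the observation that $\heatbath{\pi}{f}{\cF}{\cM}=\heatbath{\pi}{f'}{\cF}{\cM'}$ for $\cM'=\{m\in\cM: f(m)>0\}$, combined with the fact that compression preserves connectedness. The one place you genuinely diverge is the non-negativity of the spectrum: the paper dispatches this by citing Lemma~1.2 of Dyer--Greenhill--Ullrich, whereas you give a self-contained argument identifying each $\heatbathmove{\pi}{\cF}{m}$, after the similarity transform by $D_\pi^{1/2}$, as the orthogonal projection in $\ell^2(\pi)$ onto functions constant on the $m$-rays (i.e.\ conditional expectation with respect to the ray partition), hence symmetric idempotent with spectrum $\{0,1\}$, so the convex combination is positive semidefinite. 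This is more than what the paper does here, and it cleanly generalizes Proposition~\ref{p:MoveMatrix} (stated later in the paper only for uniform $\pi$) to arbitrary positive $\pi$; the trade-off is a longer proof, while the paper's citation keeps the argument short at the cost of being less self-contained. Your irreducibility argument is also spelled out in both directions rather than asserted, which is a modest improvement in rigor over the paper's terse treatment.
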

\begin{proof}
Since for any $u\in\cF$ and any $m\in\cM$,
$\heatbathmove{\pi}{\cF}{m}(u,u)>0$, there are halting states and thus
$\heatbath{\pi}{f}{\cF}{\cM}$ is aperiodic.  By definition,
$\pi(x)\heatbathmove{\pi}{\cF}{m}(x,y)=\pi(y)\heatbathmove{\pi}{\cF}{m}(y,x)$
and thus $\heatbath{\pi}{f}{\cF}{\cM}$ is reversible with
respect to $\pi$ and $\pi$ is a stationary distribution.  The
statement on the eigenvalues is exactly \cite[Lemma~1.2]{Dyer2014}.
Let $\cM'=\{m\in\cM: f(m)>0\}$ and $f'=f|_{\cM'}$, then
$\heatbath{\pi}{f}{\cF}{\cM}=\heatbath{\pi}{f'}{\cF}{\cM'}$ and thus the
heat-bath random walk is irreducible if and only if $\cM'$ is a Markov
basis for $\cF$.
\end{proof}

\begin{remark}\label{r:ExecutionHeatBath}
Analyzing the speed of convergence of random walks with second largest
eigenvalues does not take the computation time of a single transition
into account. From a computational point of view, the
difference of the simple walk and the heat-bath random walk is 
Step~4 of Algorithm~\ref{a:HeatBath}. However, we argue that Step~4
can be done efficiently in many cases. For instance, a hard
normalizing constant of $\pi$ cancels out. If $\pi$ is the uniform
distribution, then one needs to sample uniformly from
$\polyray{v}{m}{\cF}$ in Step~4, which can be done efficiently. If the
input of Algorithm~\ref{a:HeatBath} is a normal set $\cF=\{u\in\ZZ^d:
Au\le b\}$ that is given in $\mathcal{H}$-representation, then the
length of the ray $\polyray{v}{m}{\cF}$ can be computed with a number
of rounding, division, and comparing operations that is linear in the
number of rows of $A$.  
\end{remark}

There are situations in which the heat-bath random walk provides no
speed-up compared with the simple walk (Example~\ref{ex:NoSpeedup}).
Intuitively, adding more moves to the set of allowed moves should
improve the mixing time of the random walk. In general, however, this
is not true for the heat-bath walk
(Example~\ref{ex:AddingMovesSlowingChain}).

\begin{example}\label{ex:NoSpeedup}
For $n\in\NN$, consider the normal set
\begin{equation*}
\cF_n:=
\left\{
\begin{bmatrix}
0 & 1 & 1 &\cdots& 1\\
1 & 0 & 0 &\cdots& 0\\
\end{bmatrix},
\begin{bmatrix}
1 & 0 & 1 &\cdots& 1 \\
0 & 1 & 0 &\cdots& 0\\
\end{bmatrix},
\ldots,
\begin{bmatrix}
1 & 1 &\cdots & 1 & 0 \\
0 & 0 &\cdots & 0 & 1 \\
\end{bmatrix}\right\}\subset\QQ^{2\times n}.
\end{equation*}
In the language of~\cite[Section~1.1]{drton2008}, $\cF_n$ is precisely
the fiber of the $2 \times n$ independence model where row sums are
$(n-1,1)$ and column sums are $(1,1,\ldots,1)$. The minimal Markov
basis of the independence model, often referred to as the \emph{basic
moves}, is precisely the set $\cM_n:=\{v-u: u,v\in
\cF_n\}\setminus\{0\}$.  In particular, the fiber graph $\cF_n(\cM_n)$ is
the complete graph on $n$ nodes. All rays along basic moves have
length $2$ and thus the transition matrices of the simple random walk
and the heat-bath random walk coincide. There are $n\cdot(n-1)$ many
basic moves and the transition matrix of both random walks is
\begin{equation*}
\frac{1}{n(n-1)}
\begin{bmatrix}
1 & \dots & 1 \\ 
\vdots & & \vdots \\ 
1 & \dots & 1 \\ 
\end{bmatrix}
+
\frac{(n(n-1)-n)}{n(n-1)}\cdot I_n.
\end{equation*}
The second largest eigenvalue is $1-\frac{1}{n-1}$ which implies that
for $n\to\infty$, neither the simple walk nor the heat-bath random walk
are rapidly mixing.
\end{example}

\begin{example}\label{ex:AddingMovesSlowingChain}
Let $\cF=[2]\times[5]\subset\ZZ^2$, $\cM=\{e_1,e_2,2e_1+e_2\}$, and let
$\pi$ be the uniform distribution on $\cF$.  Since $\{e_2,2e_1+e_2\}$ is
not a Markov basis for $\cF$, any mass function $f:\cM\to[0,1]$ must
have $f(e_1)>0$ in order to make the corresponding heat-bath random
walk irreducible. Comparing the second largest eigenvalue modulus of
heat-bath random walks that sample from $\{e_1,e_2\}$ and $\cM$
uniformly, we obtain
\begin{equation*}
\lambda\left(\frac{1}{2}\heatbathmove{\pi}{\cF}{e_1}+\frac{1}{2}\heatbathmove{\pi}{\cF}{e_2}\right)=\frac{1}{2}<
\frac{2}{3}=\lambda\left(\frac{1}{3}\heatbathmove{\pi}{\cF}{e_1}+\frac{1}{3}\heatbathmove{\pi}{\cF}{e_2}+\frac{1}{3}\heatbathmove{\pi}{\cF}{2e_1+e_2}\right).
\end{equation*}
So, adding $2e_1+e_2$ to the set of allowed moves slows the
walk down. This phenomenon does not appear for the simple walk on
$\cF$, where the second largest eigenvalue modulus improves from
$\approx 0.905$ to $\approx 0.888$ when adding the move $2e_1+e_2$ to
the Markov basis. 
\end{example}

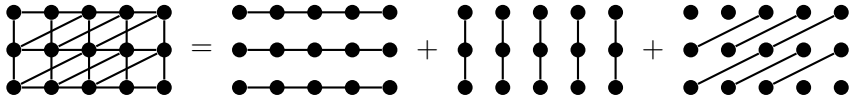
\begin{figure}[h]
\begin{tikzpicture}[xscale=0.5,yscale=0.5]

		\node [fill, circle, inner sep=2pt](D00) at (-6,0) {};
		\node [fill, circle, inner sep=2pt](D10) at (-5,0) {};
		\node [fill, circle, inner sep=2pt](D20) at (-4,0) {};
		\node [fill, circle, inner sep=2pt](D30) at (-3,0) {};
		\node [fill, circle, inner sep=2pt](D40) at (-2,0) {};

		\node [fill, circle, inner sep=2pt](D01) at (-6,1) {};
		\node [fill, circle, inner sep=2pt](D11) at (-5,1) {};
		\node [fill, circle, inner sep=2pt](D21) at (-4,1) {};
		\node [fill, circle, inner sep=2pt](D31) at (-3,1) {};
		\node [fill, circle, inner sep=2pt](D41) at (-2,1) {};

		\node [fill, circle, inner sep=2pt](D02) at (-6,2) {};
		\node [fill, circle, inner sep=2pt](D12) at (-5,2) {};
		\node [fill, circle, inner sep=2pt](D22) at (-4,2) {};
		\node [fill, circle, inner sep=2pt](D32) at (-3,2) {};
		\node [fill, circle, inner sep=2pt](D42) at (-2,2) {};

\draw[thick] (D00) -- (D40);
\draw[thick] (D01) -- (D41);
\draw[thick] (D02) -- (D42);

\draw[thick] (D00) -- (D02);
\draw[thick] (D10) -- (D12);
\draw[thick] (D20) -- (D22);
\draw[thick] (D30) -- (D32);
\draw[thick] (D40) -- (D42);

\draw[thick] (D00) -- (D21);
\draw[thick] (D10) -- (D31);
\draw[thick] (D20) -- (D41);

\draw[thick] (D01) -- (D22);
\draw[thick] (D11) -- (D32);
\draw[thick] (D21) -- (D42);

\node [fill=none, circle, inner sep=2pt](xx) at (-1,1) {$=$};

		\node [fill, circle, inner sep=2pt](A00) at (0,0) {};
		\node [fill, circle, inner sep=2pt](A10) at (1,0) {};
		\node [fill, circle, inner sep=2pt](A20) at (2,0) {};
		\node [fill, circle, inner sep=2pt](A30) at (3,0) {};
		\node [fill, circle, inner sep=2pt](A40) at (4,0) {};

		\node [fill, circle, inner sep=2pt](A01) at (0,1) {};
		\node [fill, circle, inner sep=2pt](A11) at (1,1) {};
		\node [fill, circle, inner sep=2pt](A21) at (2,1) {};
		\node [fill, circle, inner sep=2pt](A31) at (3,1) {};
		\node [fill, circle, inner sep=2pt](A41) at (4,1) {};

		\node [fill, circle, inner sep=2pt](A02) at (0,2) {};
		\node [fill, circle, inner sep=2pt](A12) at (1,2) {};
		\node [fill, circle, inner sep=2pt](A22) at (2,2) {};
		\node [fill, circle, inner sep=2pt](A32) at (3,2) {};
		\node [fill, circle, inner sep=2pt](A42) at (4,2) {};

\draw[thick] (A00) -- (A40);
\draw[thick] (A01) -- (A41);
\draw[thick] (A02) -- (A42);

\node [fill=none, circle, inner sep=2pt](xx) at (5,1) {$+$};

		\node [fill, circle, inner sep=2pt](B00) at (6,0) {};
		\node [fill, circle, inner sep=2pt](B10) at (7,0) {};
		\node [fill, circle, inner sep=2pt](B20) at (8,0) {};
		\node [fill, circle, inner sep=2pt](B30) at (9,0) {};
		\node [fill, circle, inner sep=2pt](B40) at (10,0) {};

		\node [fill, circle, inner sep=2pt](B01) at (6,1) {};
		\node [fill, circle, inner sep=2pt](B11) at (7,1) {};
		\node [fill, circle, inner sep=2pt](B21) at (8,1) {};
		\node [fill, circle, inner sep=2pt](B31) at (9,1) {};
		\node [fill, circle, inner sep=2pt](B41) at (10,1) {};

		\node [fill, circle, inner sep=2pt](B02) at (6,2) {};
		\node [fill, circle, inner sep=2pt](B12) at (7,2) {};
		\node [fill, circle, inner sep=2pt](B22) at (8,2) {};
		\node [fill, circle, inner sep=2pt](B32) at (9,2) {};
		\node [fill, circle, inner sep=2pt](B42) at (10,2) {};

\draw[thick] (B00) -- (B02);
\draw[thick] (B10) -- (B12);
\draw[thick] (B20) -- (B22);
\draw[thick] (B30) -- (B32);
\draw[thick] (B40) -- (B42);

\node [fill=none, circle, inner sep=2pt](xx) at (11,1) {$+$};

		\node [fill, circle, inner sep=2pt](C00) at (12,0) {};
		\node [fill, circle, inner sep=2pt](C10) at (13,0) {};
		\node [fill, circle, inner sep=2pt](C20) at (14,0) {};
		\node [fill, circle, inner sep=2pt](C30) at (15,0) {};
		\node [fill, circle, inner sep=2pt](C40) at (16,0) {};

		\node [fill, circle, inner sep=2pt](C01) at (12,1) {};
		\node [fill, circle, inner sep=2pt](C11) at (13,1) {};
		\node [fill, circle, inner sep=2pt](C21) at (14,1) {};
		\node [fill, circle, inner sep=2pt](C31) at (15,1) {};
		\node [fill, circle, inner sep=2pt](C41) at (16,1) {};

		\node [fill, circle, inner sep=2pt](C02) at (12,2) {};
		\node [fill, circle, inner sep=2pt](C12) at (13,2) {};
		\node [fill, circle, inner sep=2pt](C22) at (14,2) {};
		\node [fill, circle, inner sep=2pt](C32) at (15,2) {};
		\node [fill, circle, inner sep=2pt](C42) at (16,2) {};

\draw[thick] (C00) -- (C21);
\draw[thick] (C10) -- (C31);
\draw[thick] (C20) -- (C41);
\draw[thick] (C01) -- (C22);
\draw[thick] (C11) -- (C32);
\draw[thick] (C21) -- (C42);

	\end{tikzpicture}
   \caption{Decomposition of the graph in
   Example~\ref{ex:AddingMovesSlowingChain}}\label{f:AddingMovesSlowingChain}
\end{figure}

\begin{remark}\label{r:GlauberDynamics}
Let $\cF\subset\ZZ^d$ be finite and $\cM=\{m_1,\dots,m_d\}\subset\ZZ^d$ be a
linearly independent Markov basis of $\cF$. If the moves are selected
uniformly, then the heat-bath random walk on $\cF$ coincides with the
\emph{Glauber dynamics} on $\cF$. To see it, choose $u\in\cF$ and let
\begin{equation*}
\cF':=\{\lambda\in\ZZ^d: u+\lambda_1m_1+\dots+\lambda_dm_d\in\cF\}.
\end{equation*}
It is easy to check that $\cF'$ is unique up to translation and
depends only on $\cF$, $u$, and $\cM$. Since the vectors in $\cM$ are linearly
independent, every element of $\cF$ can be represented by a
unique choice of coefficients in $\cF'$. Thus, the heat-bath random walk
on $\cF$ using $\cM$ is equivalent to the heat-bath random walk on
on $\cF'$ using the unit vectors as moves. For any unit vector
$e_i\in\ZZ^d$, the ray through an element $v\in\cF'$ is $\{w\in\cF:
w_j=v_j \forall j\neq i\}$ and this is precisely the form desired in
the Glauber dynamics~\cite[Section~3.3.2]{levin2008}.
\end{remark}

For the remainder of this section, we primarily focus on heat-bath
random walks $\heatbath{\pi}{f}{\cF}{\cM}$ that converge to the
uniform distribution $\pi$ on a finite, but not necessarily normal, set $\cF$. We particularly
aim for bounds on its second largest eigenvalue by
making use of the decomposition from equation~\ref{equ:HeatBath}. Our
first observations consider its summands $\heatbathmove{\pi}{\cF}{m}$
that can be well understood analytically
(Proposition~\ref{p:MoveMatrix}) and combinatorially
(Proposition~\ref{p:IntersectionTheorem}). 

\begin{prop}\label{p:MoveMatrix}
Let $\cF\subset\ZZ^d$ be a finite set, $m\in\ZZ^d$, and $\pi:\cF\to[0,1]$
be the uniform distribution. Let $\cR_1,\dots,\cR_k$ be the disjoint
rays through $\cF$ along $m$. Then
\begin{enumerate}
\item $\heatbathmove{\pi}{\cF}{m}$ is symmetric and idempotent.
\item
$\img(\heatbathmove{\pi}{\cF}{m})=\spann{\sum_{x\in\cR_1}e_x,\sum_{x\in\cR_2}e_x,\dots,\sum_{x\in\cR_k}e_x}$.
\item $\ker(\heatbathmove{\pi}{\cF}{m})=\bigoplus_{i=1}^k\spann{e_x-e_y:
x,y\in\cR_i, x\neq y}$.
\item $\rank(\heatbathmove{\pi}{\cF}{m})=k$ and
$\dim\ker(\heatbathmove{\pi}{\cF}{m})=|\cF|-k$.
\item The spectrum of $\heatbathmove{\pi}{\cF}{m}$ is $\{0,1\}$.
\end{enumerate}
\end{prop}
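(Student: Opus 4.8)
The key observation is that $\heatbathmove{\pi}{\cF}{m}$ is block-diagonal with respect to the partition of $\cF$ into the disjoint rays $\cR_1,\dots,\cR_k$: if $x\in\cR_i$ then $\polyray{x}{m}{\cF}=\cR_i$, so $\heatbathmove{\pi}{\cF}{m}(x,y)=0$ unless $y\in\cR_i$. Hence it suffices to understand each block, and there, since $\pi$ is uniform, the ray $\cR_i$ of size $n_i:=|\cR_i|$ contributes the $n_i\times n_i$ matrix with every entry equal to $1/n_i$, i.e.\ $\tfrac1{n_i}\mathbbm{1}\mathbbm{1}^T$. I would state this reduction first and then read off everything from the elementary spectral theory of $\tfrac1{n_i}\mathbbm{1}\mathbbm{1}^T$.

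Then the five claims follow directly. For (1): each block $\tfrac1{n_i}\mathbbm{1}\mathbbm{1}^T$ is visibly symmetric, and $(\tfrac1{n_i}\mathbbm{1}\mathbbm{1}^T)^2=\tfrac1{n_i^2}\mathbbm{1}(\mathbbm{1}^T\mathbbm{1})\mathbbm{1}^T=\tfrac1{n_i}\mathbbm{1}\mathbbm{1}^T$, so the whole matrix is symmetric and idempotent. For (2): the image of $\tfrac1{n_i}\mathbbm{1}\mathbbm{1}^T$ is the line spanned by $\mathbbm{1}$ on the coordinates of $\cR_i$, which is exactly $\spann{\sum_{x\in\cR_i}e_x}$; taking the direct sum over the blocks gives the stated span. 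For (3): on the block indexed by $\cR_i$, a vector lies in the kernel iff it is orthogonal to $\mathbbm{1}$, i.e.\ its coordinates sum to zero, and the space of such vectors is $\spann{e_x-e_y: x,y\in\cR_i,\,x\ne y}$; since the blocks are on disjoint coordinate sets, the kernel of the whole matrix is the direct sum. For (4): each block has rank $1$, so $\rank=\sum_i 1=k$, and the rank–nullity theorem gives $\dim\ker=|\cF|-k$; this is also immediate from (2) and (3). For (5): an idempotent matrix has all eigenvalues in $\{0,1\}$, both occur here since $k\ge 1$ (so the image is nontrivial) and $|\cF|>k$ unless every ray is a singleton (in which case $\heatbathmove{\pi}{\cF}{m}$ is the identity and the spectrum is $\{1\}$); to keep the statement clean one may either treat this degenerate case separately or simply note that $\{0,1\}$ is meant as the set of \emph{possible} eigenvalues.

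There is no real obstacle here: the only thing to be careful about is the bookkeeping that turns the per-ray statements into the global direct-sum statements, which amounts to observing that a permutation of the coordinates of $\RR^\cF$ brings $\heatbathmove{\pi}{\cF}{m}$ into genuine block-diagonal form with the blocks $\tfrac1{n_i}\mathbbm{1}\mathbbm{1}^T$, so that image, kernel, and rank decompose as direct sums over $i\in[k]$. The mildly delicate point worth flagging is the edge case where some $\cR_i$ is a singleton: then the corresponding block is the scalar $1$, contributing nothing to the kernel, and this is already consistent with item (3) (the indexing set $\{(x,y): x,y\in\cR_i,\,x\ne y\}$ is empty) and with item (4).
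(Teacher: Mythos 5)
Your proof is correct and follows essentially the same route as the paper: conjugate by a permutation so that $\heatbathmove{\pi}{\cF}{m}$ becomes block-diagonal with each block the constant $|\cR_i|^{-1}$ matrix on the coordinates of $\cR_i$, and then read off symmetry, idempotence, image, kernel, rank, and spectrum from this form. The remark you add about the degenerate case in which every ray is a singleton (so the matrix is the identity and $0$ does not actually occur as an eigenvalue) is a valid caveat that the paper's statement and proof silently pass over.
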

\begin{proof}
Symmetry of $\heatbathmove{\pi}{\cF}{m}$ follows from the
definition. By assumption, $\cF$ is the disjoint union of
$\cR_1,\dots,\cR_k$ and hence there exists a permutation matrix
$S$ such that $S\heatbathmove{\pi}{\cF}{m}S^T$ is a block matrix whose
building blocks are the matrices 
\begin{equation*}
\frac{1}{|\cR_i|}\begin{bmatrix}
1 & \dots & 1 \\ 
\vdots & & \vdots \\ 
1 & \dots & 1 \\ 
\end{bmatrix}\in\QQ^{|\cR_i|\times|\cR_i|}.
\end{equation*}
Thus, $\heatbathmove{\pi}{\cF}{m}$ is
idempotent and the rank of $\heatbathmove{\pi}{\cF}{m}$ is $k$. A basis
of its image and its kernel can be read off directly and
idempotent matrices can only have the eigenvalues $0$ and~$1$. 
\end{proof}

\begin{prop}\label{p:IntersectionTheorem}
Let $\cF\subset\ZZ^d$ and $\cM\subset\ZZ^d$ be finite sets,
$\pi:\cF\to[0,1]$ be the uniform distribution, and let
$V_1,\dots,V_c\subseteq\cF$ be the nodes of the connected components of
$\cF(\cM)$, then
$$\bigcap_{m\in\cM}\img(\heatbathmove{\pi}{\cF}m)=\spann{\sum_{x\in
V_1}e_x,\dots,\sum_{x\in V_c}e_x}.$$
\end{prop}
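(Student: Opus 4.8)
The plan is to show the two inclusions separately, using Proposition~\ref{p:MoveMatrix} throughout to identify each $\img(\heatbathmove{\pi}{\cF}{m})$ with the span of the indicator vectors of the $m$-rays in $\cF$.

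First I would prove the inclusion ``$\supseteq$''. Fix a connected component $V_j$ of $\cF(\cM)$ and consider the vector $w_j:=\sum_{x\in V_j}e_x$. For each $m\in\cM$, the $m$-rays through $\cF$ partition $\cF$, and since an edge of $\cF(\cM)$ coming from $m$ joins two points of the same $m$-ray, every $m$-ray is entirely contained in a single connected component of $\cF(\cM)$; in particular each $m$-ray meeting $V_j$ lies inside $V_j$. Hence $w_j$ is a sum of indicator vectors of $m$-rays, so by part~(2) of Proposition~\ref{p:MoveMatrix} we get $w_j\in\img(\heatbathmove{\pi}{\cF}{m})$. As this holds for all $m\in\cM$, $w_j$ lies in the intersection, and therefore so does $\spann{w_1,\dots,w_c}$.

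Next I would prove ``$\subseteq$''. Suppose $v=\sum_{x\in\cF}v_x e_x$ lies in $\bigcap_{m\in\cM}\img(\heatbathmove{\pi}{\cF}{m})$. Since $v\in\img(\heatbathmove{\pi}{\cF}{m})$ and this image is the span of indicator vectors of $m$-rays, part~(2) of Proposition~\ref{p:MoveMatrix} tells us that $v$ is constant on each $m$-ray; equivalently, whenever $x,y\in\cF$ with $x-y\in\ZZ m$ we have $v_x=v_y$. In particular, if $\{x,y\}$ is an edge of $\cF(\cM)$, say $x-y\in\pm\cM$, then $v_x=v_y$. By connectedness of each component, $v$ is constant on each $V_j$, i.e.\ $v=\sum_{j=1}^c c_j w_j$ for scalars $c_j$, which is exactly membership in $\spann{w_1,\dots,w_c}$.

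The only mild subtlety — and the closest thing to an obstacle — is the first inclusion: one must note that $\heatbathmove{\pi}{\cF}{m}$ sees all of $\ZZ m$, so its rays can be strictly longer than single edges of $\cF(\cM)$, yet an $m$-ray still cannot straddle two components because any two of its points are connected by $\cF(\cM)$-edges along $m$ (consecutive lattice points on the ray differ by $\pm m\in\pm\cM$). Once this is observed, both inclusions are immediate from Proposition~\ref{p:MoveMatrix}(2); no eigenvalue or rank information beyond the description of the image is needed.
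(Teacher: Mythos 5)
Your proof matches the paper's in both structure and content: each direction is reduced to Proposition~\ref{p:MoveMatrix}(2), with ``$\supseteq$'' coming from the fact that each $m$-ray lies inside one $\cF(\cM)$-component (so every component indicator is a sum of ray indicators), and ``$\subseteq$'' from ray-constancy of $v$ propagating along $\cF(\cM)$-edges to give constancy on components. Your version of ``$\subseteq$'' is a bit leaner than the paper's, which sets up explicit coefficients $\lambda^i_j$ for the ray expansions and propagates their equality along a path, but the idea is identical. The one shared caveat, which you rightly single out as the crux of ``$\supseteq$'', is the parenthetical justification ``consecutive lattice points on the ray differ by $\pm m$'': this presupposes that whenever $u,u+km\in\cF$, the intermediate points $u+jm$ ($0\le j\le k$) are also in $\cF$, which holds for normal sets and fibers of integer matrices but not for arbitrary finite $\cF$ (take $\cF=\{0,2\}\subset\ZZ$ with $\cM=\{1\}$: there is a single $1$-ray but two $\cF(\cM)$-components, and the claimed equality fails since the left side is one-dimensional and the right side two-dimensional). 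The paper's own proof makes the identical tacit assumption when it asserts that each $V_i$ ``decomposes disjointly into rays along $m$,'' so this is a missing hypothesis in the proposition as stated rather than a defect peculiar to your argument.
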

\begin{proof}
It is clear by Proposition~\ref{p:MoveMatrix} that the set
on the right-hand side is contained in any
$\img(\heatbathmove{\pi}{\cF}m)$ since any $V_i$ decomposes disjointly into rays
along $m\in\cM$.
To show the other inclusion, write $\cM=\{m_1,\dots,m_k\}$ and let for any $i\in[k]$,
$\cR^i_1,\dots,\cR^i_{n_i}$ be the disjoint rays through $\cF$ parallel
to $m_i$. In particular, $\{\cR^i_1,\dots,\cR^i_{n_i}\}$ is a
partition of $\cF$ for any $i\in[k]$. Let $v\in\bigcap_{m\in\cM}\img(\heatbathmove{\pi}{\cF}m)$.
Again by Proposition~\ref{p:MoveMatrix}, there exists for any
$i\in[k]$, $\lambda^i_1,\dots,\lambda^i_{n_i}\in\QQ$ such that
\begin{equation*}
v=\sum_{j=1}^{n_i}\sum_{x\in\cR^i_j}\lambda_j^ie_x.
\end{equation*}
Notice that if two distinct Markov moves $m_i$ and $m_{i'}$
and two indices $j\in[n_i]$ and $j'\in[n_{i'}]$
satisfy $\cR_{j}^i\cap\cR^{i'}_{j'}\neq\emptyset$,
then $\lambda^i_j=\lambda^{i'}_{j'}$. We show that for
any $i\in[k]$ and any $a\in[c]$, $\lambda_{j}^i=\lambda_{j'}^i$ when
$\cR^i_j$ and $\cR^i_{j'}$ are a subset of $V_a$. This implies the
proposition. So take distinct $x,x'\in V_a$
and assume that $x$ and $x'$ lie on different rays of
$m_i$ and let that be $x\in\cR^i_{j}$ and $x'\in\cR^i_{j'}$ with
$j\neq j'$. Since $x$ and $x'$ are in the same connected component
$V_a$ of $\cF(\cM)$, let $y_{i_0},\dots,y_{i_r}\in\cF$ be the nodes on a
minimal path in $\cF^c(\cM)$ with $y_{i_0}=x$ and $y_{i_r}=x'$. For any $s\in[r]$,
$y_{i_s}$ and $y_{i_{s-1}}$ are contained in the same ray
$\cR^{k_s}_{t_s}$ coming from a
Markov move $m_{k_s}$. In particular,
$\cR_{k_{s-1}}^{t_{s-1}}\cap\cR^{k_s}_{t_s}\neq\emptyset$ and due to
our observation made above
$\lambda_{j}^i=\lambda_{t_1}^{k_1}=\lambda_{t_2}^{k_2}=\dots=\lambda_{t_r}^{k_r}=\lambda_{j'}^i$
which finishes the proof.
\end{proof}

\begin{defn}\label{d:RayMatrix}
Let $\cF\subset\ZZ^d$ and $\cM\subset\ZZ^d$ be finite sets and
$\cM'\subseteq\cM$. Let $\mathcal{V}$ be the set of
connected components of $\cF(\cM\setminus\cM')$ and
$\cR$ be the set of all rays through $\cF$ along all elements
of $\cM'$. The \emph{ray matrix} of $\cF(\cM)$ along $\cM'$
is
$\rayMat{\cF}{\cM}{\cM'}:=(|R\cap
V|)_{R\in\cR,V\in\cV}\in\NN^{\cR\times\cV}$.
\end{defn}

\begin{example}
Let $\cF =[3]\times [3]$, $\cM=\{e_1, e_2, e_1+e_2 \}$, and $\cM'= \{
e_1, e_2\}$. Then $\cF(\cM\setminus \cM')$ has five connected
components and the ray matrix of $\cF(\cM)$ along
$\cM'$ is 
\[ \rayMat{\cF}{\cM}{\cM'}=
\begin{bmatrix}
1 & 1 & 1 & 0 & 0 \\ 
0 & 1 & 1 & 1 & 0 \\ 
0 & 0 & 1 & 1 & 1 \\
0 & 0 & 1 & 1 & 1\\
0 & 1 & 1 & 1 & 0\\
1 & 1 & 1 & 0 & 0
\end{bmatrix}.
\]
\end{example}

\begin{remark}\label{r:RayMatUnitVecTranspose}
Let $\cF\subset\ZZ^2$, then the rays through $\cF$ along $e_1$ are the
connected components of $\cF(\{e_1,e_2\}\setminus\{e_2\})$ and the rays through $\cF$
along $e_2$ are the connected components of
$\cF(\{e_1,e_2\}\setminus\{e_1\})$, thus
$\rayMat{\cF}{\cM}{e_1}=\rayMat{\cF}{\cM}{e_2}^T$.
\end{remark}

\begin{prop}\label{p:RaysAndCC}
Let $\cF\subset\ZZ^d$ and $\cM\subset\ZZ^d$ be finite sets,
$\pi:\cF\to[0,1]$ be the uniform distribution, and $\cM'\subseteq\cM$.
Then
\begin{equation*}
\ker(\rayMat{\cF}{\cM}{\cM'})\cong
\bigcap_{m\in\cM\setminus\cM'}\img(\heatbathmove{\pi}{\cF}{m})
\cap\bigcap_{m\in\cM'}\ker(\heatbathmove{\pi}{\cF}{m}).
\end{equation*}
\end{prop}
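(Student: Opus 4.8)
The plan is to turn both sides into an explicit linear-algebra statement via Propositions~\ref{p:IntersectionTheorem} and~\ref{p:MoveMatrix}. Write $\cV=\{V_1,\dots,V_c\}$ for the set of connected components of $\cF(\cM\setminus\cM')$ and set $w_a:=\sum_{x\in V_a}e_x$ for $a\in[c]$. Applying Proposition~\ref{p:IntersectionTheorem} with $\cM$ replaced by $\cM\setminus\cM'$ identifies
\begin{equation*}
W:=\bigcap_{m\in\cM\setminus\cM'}\img(\heatbathmove{\pi}{\cF}{m})=\spann{w_1,\dots,w_c}.
\end{equation*}
Because the $V_a$ partition $\cF$, the vectors $w_1,\dots,w_c$ have pairwise disjoint supports and hence are linearly independent, so $(\lambda_a)_{a\in[c]}\mapsto v_\lambda:=\sum_{a=1}^c\lambda_aw_a$ is a linear isomorphism onto $W$ with $(v_\lambda)_x=\lambda_a$ whenever $x\in V_a$. (If $\cM\setminus\cM'=\nothing$ this still applies: $W$ is the whole ambient space and the $V_a$ are the singletons of $\cF$.)

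Next I would unravel the kernel conditions on $W$. By Proposition~\ref{p:MoveMatrix}(3), for $m\in\cM'$ a vector $v$ lies in $\ker(\heatbathmove{\pi}{\cF}{m})$ if and only if $\sum_{x\in R}v_x=0$ for every ray $R$ through $\cF$ along $m$; equivalently, since $\heatbathmove{\pi}{\cF}{m}$ is an orthogonal projection onto the span of the ray-indicator vectors (Proposition~\ref{p:MoveMatrix}), $v$ must be orthogonal to each such indicator. For $v=v_\lambda\in W$ and a ray $R$ along some element of $\cM'$, the definition of the ray matrix gives
\begin{equation*}
\sum_{x\in R}(v_\lambda)_x=\sum_{a=1}^c|R\cap V_a|\cdot\lambda_a=\big(\rayMat{\cF}{\cM}{\cM'}\lambda\big)_R .
\end{equation*}
Hence $v_\lambda\in\bigcap_{m\in\cM'}\ker(\heatbathmove{\pi}{\cF}{m})$ exactly when $\rayMat{\cF}{\cM}{\cM'}\lambda=\0$.

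Combining the two steps, the isomorphism $\lambda\mapsto v_\lambda$ restricts to a linear isomorphism from $\ker(\rayMat{\cF}{\cM}{\cM'})$ onto $W\cap\bigcap_{m\in\cM'}\ker(\heatbathmove{\pi}{\cF}{m})$, which is precisely the right-hand side of the asserted identity. I do not expect a genuine obstacle here: the only point requiring care is the index bookkeeping — that the rows of $\rayMat{\cF}{\cM}{\cM'}$ are exactly the rays entering the conditions $v\in\ker(\heatbathmove{\pi}{\cF}{m})$ for $m\in\cM'$, and its columns are exactly the components of $\cF(\cM\setminus\cM')$ spanning $W$ — so that the row $(\,|R\cap V_a|\,)_{a\in[c]}$ of the ray matrix literally implements the operation ``sum the coordinates of $v_\lambda$ over the ray $R$''.
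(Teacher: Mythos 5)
Your argument is correct and follows essentially the same route as the paper: both proofs invoke Proposition~\ref{p:IntersectionTheorem} to identify $\bigcap_{m\in\cM\setminus\cM'}\img(\heatbathmove{\pi}{\cF}{m})$ with the span of the component indicators $w_a=\sum_{x\in V_a}e_x$, and then translate the kernel conditions for $m\in\cM'$ into $\sum_{a}|R\cap V_a|\lambda_a=0$ over the rays $R$, which is exactly $\rayMat{\cF}{\cM}{\cM'}\lambda=\0$. You are a little more explicit than the paper about why $\lambda\mapsto v_\lambda$ is injective (disjoint supports) and about the degenerate case $\cM\setminus\cM'=\nothing$, but the substance is identical.
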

\begin{proof}
Let $V_1,\dots,V_c$ be the connected components of $\cF(\cM\setminus\cM')$ and
$\cR_1,\dots,\cR_r$ be the rays along elements in $\cM'$. 
Let $I:=\bigcap_{m\in\cM\setminus\cM'}\img(\heatbathmove{\pi}{\cF}{m})$ and
$K:=\bigcap_{m\in\cM'}\ker(\heatbathmove{\pi}{\cF}{m})$. By
Proposition~\ref{p:IntersectionTheorem}, any element of $I$ has the
form $v=\sum_{i=1}^c(\lambda_i\sum_{x\in V_i}e_x)$ for
$\lambda_1,\dots,\lambda_c\in\QQ$. Assume additionally that
$v\in\ker(\heatbathmove{\pi}{\cF}{m})$ for $m\in\cM'$ and let
$\cR_{i_1},\dots\cR_{i_j}$ be the rays which belong to $m$,
then for any $k\in[j]$,
$0=\sum_{x\in\cR_{i_k}}v_x=\sum_{j=1}^c\lambda_j|\cR_{i_k}\cap V_j|$. Put 
differently, a vector $\lambda\in\RR^c$ is in the kernel
of $(|\cR_i\cap V_j|)_{i\in[r],j\in[c]}$ if and only if
$\sum_{i=1}^c(\lambda_i\sum_{x\in V_i}e_x)\in I\cap K$.  
\end{proof}

Conditions on the kernel of the ray matrix allow us to give a lower bound on the 
second largest eigenvalue of the heat-bath random walk.

\begin{prop}\label{p:KernelRayMat}
Let $\cF\subset\ZZ^d$ and $\cM\subset\ZZ^d$ be finite sets and
$\pi$ be the uniform distribution.  Let
$\cM'\subseteq\cM$ such that $\ker(\rayMat{\cF}{\cM}{\cM'})\neq\{0\}$, then $\lambda(\heatbath{\pi}{f}{\cF}{\cM})\ge
1-\sum_{m\in\cM'}f(m)$ for any mass function $f:\cM\to[0,1]$.
\end{prop}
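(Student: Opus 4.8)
The plan is to convert a nonzero element of $\ker(\rayMat{\cF}{\cM}{\cM'})$ into an eigenvector of $\heatbath{\pi}{f}{\cF}{\cM}$ with eigenvalue $1-\sum_{m\in\cM'}f(m)$, and then to check that this eigenvalue really lies below $\lambda(\heatbath{\pi}{f}{\cF}{\cM})$.

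First I would apply Proposition~\ref{p:RaysAndCC}: the hypothesis $\ker(\rayMat{\cF}{\cM}{\cM'})\neq\{0\}$ yields a nonzero vector
\[
v\in\bigcap_{m\in\cM\setminus\cM'}\img(\heatbathmove{\pi}{\cF}{m})\ \cap\ \bigcap_{m\in\cM'}\ker(\heatbathmove{\pi}{\cF}{m}),
\]
namely the image of a nonzero kernel vector of the ray matrix under the isomorphism from that proposition. The observation that makes everything work is Proposition~\ref{p:MoveMatrix}(1): each $\heatbathmove{\pi}{\cF}{m}$ is symmetric and idempotent, hence the orthogonal projection onto $\img(\heatbathmove{\pi}{\cF}{m})$. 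Consequently $\heatbathmove{\pi}{\cF}{m}v=v$ for every $m\in\cM\setminus\cM'$ and $\heatbathmove{\pi}{\cF}{m}v=0$ for every $m\in\cM'$.

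Next I would plug $v$ into the decomposition~\eqref{equ:HeatBath} and compute
\[
\heatbath{\pi}{f}{\cF}{\cM}\,v=\sum_{m\in\cM}f(m)\,\heatbathmove{\pi}{\cF}{m}\,v=\Big(\sum_{m\in\cM\setminus\cM'}f(m)\Big)v=\Big(1-\sum_{m\in\cM'}f(m)\Big)v,
\]
so $\mu:=1-\sum_{m\in\cM'}f(m)\in[0,1]$ is an eigenvalue of $\heatbath{\pi}{f}{\cF}{\cM}$ with eigenvector $v\neq 0$. To see that $\mu\le\lambda(\heatbath{\pi}{f}{\cF}{\cM})$ I would note that $\heatbath{\pi}{f}{\cF}{\cM}$ is symmetric, being a convex combination of the symmetric matrices $\heatbathmove{\pi}{\cF}{m}$, and that all its eigenvalues are non-negative by Proposition~\ref{p:HeatBath}, so $\lambda(\heatbath{\pi}{f}{\cF}{\cM})=\lambda_2(\heatbath{\pi}{f}{\cF}{\cM})$. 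Since every $\heatbathmove{\pi}{\cF}{m}$ is stochastic it fixes $\1$, hence $\1\in\img(\heatbathmove{\pi}{\cF}{m})$ for all $m$; choosing any $m\in\cM'$ and using $v\in\ker(\heatbathmove{\pi}{\cF}{m})=\img(\heatbathmove{\pi}{\cF}{m})^{\perp}$ gives $v\perp\1$. Thus $\1$ spans the one-dimensional invariant subspace carrying the top eigenvalue $\lambda_1=1$, while $v$ lies in the complementary invariant subspace, whose spectrum is $\lambda_2\ge\dots\ge\lambda_n$; hence $\lambda_2\ge\mu$, as claimed.

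I do not anticipate a genuine obstacle: granted Propositions~\ref{p:MoveMatrix} and~\ref{p:RaysAndCC}, this is a short linear-algebra computation. The two points that need a little care are the upgrade from ``$v$ is in the image of $\heatbathmove{\pi}{\cF}{m}$'' to ``$v$ is fixed by $\heatbathmove{\pi}{\cF}{m}$'', which is exactly where idempotency of the move matrices enters, and the remark that $v\perp\1$, which is what guarantees that $\mu$ is competing for the second largest eigenvalue rather than merely for $\lambda_1=1$.
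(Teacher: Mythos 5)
Your argument follows the paper's proof exactly: apply Proposition~\ref{p:RaysAndCC} to produce a nonzero $v$ with $\heatbathmove{\pi}{\cF}{m}v=v$ for $m\in\cM\setminus\cM'$ and $\heatbathmove{\pi}{\cF}{m}v=0$ for $m\in\cM'$, then plug into the decomposition~\eqref{equ:HeatBath} to read off the eigenvalue $1-\sum_{m\in\cM'}f(m)$. The one thing you add beyond the paper's own write-up is the (worthwhile) check that $v\perp\1$, so that this eigenvalue is indeed bounded above by $\lambda_2(\heatbath{\pi}{f}{\cF}{\cM})$ rather than being counted as the trivial top eigenvalue; the paper leaves this step implicit.
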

\begin{proof}
Using the isomorphism from Proposition~\ref{p:RaysAndCC}, we can
choose a non-zero $v\in\QQ^P$ such that
$\heatbathmove{\pi}{\cF}{m}v=v$ for all $m\in\cM\setminus\cM'$ and
$\heatbathmove{\pi}{\cF}{m}v=0$ for all $m\in\cM'$. In particular
\begin{equation*}
\heatbath{\pi}{f}{\cF}{\cM}v=\sum_{m\in\cM}f(m)\heatbathmove{\pi}{\cF}{m}v=
\sum_{m\in\cM\setminus\cM'}f(m)\heatbathmove{\pi}{\cF}{m}v=\sum_{m\in\cM\setminus\cM}f(m)v.
\end{equation*}
Since $f$ is a mass function, $1-\sum_{m\in\cM'}f(m)$ is an
eigenvalue of $\heatbath{\pi}{f}{\cF}{\cM}$.
\end{proof}

\begin{defn}\label{d:IntersectingRayProperty}
Let $\cF\subset\ZZ^d$ and $m,m'\in\ZZ^d$ not collinear. The pair
$(m,m')$ has the \emph{intersecting
ray property} in $\cF$ if the following holds:
For any pair of rays $\cR_1,\cR_2$ parallel to $m$ and any pair of rays
$\cR_1',\cR_2'$ parallel to $m'$ where both $\cR_1\cap\cR_1'$ and
$\cR_2\cap\cR_2'$ are not empty, then
$\cR_1\cap\cR_2'\neq\emptyset$ implies $\cR_1'\cap\cR_2\neq\emptyset$
and $|\cR_1|\cdot|\cR_1'|^{-1}=|\cR_2|\cdot|\cR_2'|^{-1}$. For a
finite set $\cM\subset\ZZ^d$, the graph $\cF^c(\cM)$ has the
\emph{intersecting ray property} if all $(m,m')$ have the intersecting
ray property in $\cF$.
\end{defn}

\begin{example}
The compressed fiber graph on
$[n_1]\times\cdots\times[n_d]\subset\ZZ^d$ that uses the unit vectors
$\{e_1,\dots,e_d\}$ as moves has the intersecting ray property. On the
other hand, consider $\cF=\{u\in\NN^2: u_1+u_2\le 1\}$ and take the
rays $\cR_1:=\{(0,0),(0,1)\}$ and $\cR_2:=\{(1,0)\}$ that
are parallel to $e_2$ and the rays $\cR_1':=\{(0,1)\}$ and
$\cR_2':=\{(0,0),(1,0)\}$ that are parallel to $e_1$. Then
$\cR_1\cap\cR_1'=\{(1,0)\}$ and $\cR_2\cap\cR_2'=\{(0,1)\}$, but
$\cR_1\cap\cR_2'=\{(0,0)\}\neq\emptyset$ and
$\cR_1'\cap\cR_2=\emptyset$.
\end{example}

\begin{prop}\label{p:CommutingMoveMatrices}
Let $m,m'\in\ZZ^d$ not collinear and $\cF\subset\ZZ^d$ be a finite set. The
matrices $\heatbathmove{\pi}{\cF}{m}$ and $\heatbathmove{\pi}{\cF}{m'}$
commute if and only if $(m,m')$ have the intersecting ray property in
$\cF$.
\end{prop}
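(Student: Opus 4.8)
The plan is to compute the product $\heatbathmove{\pi}{\cF}{m}\heatbathmove{\pi}{\cF}{m'}$ entrywise and to read both directions of the equivalence off the resulting formula. Throughout, for $x\in\cF$ write $\cR_m(x):=\polyray{x}{m}{\cF}$ and $\cR_{m'}(x):=\polyray{x}{m'}{\cF}$ for the unique rays through $x$ parallel to $m$ and to $m'$. First I would recall (from the definition of $\heatbathmove{\pi}{\cF}{m}$ and the proof of Proposition~\ref{p:MoveMatrix}) that, since $\pi$ is uniform, $\heatbathmove{\pi}{\cF}{m}$ has entry $|\cR_m(x)|^{-1}$ in position $(x,y)$ whenever $y\in\cR_m(x)$ and $0$ otherwise; in particular it is symmetric, and likewise for $m'$. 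A second preliminary observation is that a ray parallel to $m$ and a ray parallel to $m'$ meet in at most one point, since two distinct common points would differ by a nonzero vector lying in both $m\ZZ$ and $m'\ZZ$, forcing $m$ and $m'$ to be collinear.

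With these in hand, for $x,z\in\cF$ one obtains
\[
\left(\heatbathmove{\pi}{\cF}{m}\heatbathmove{\pi}{\cF}{m'}\right)_{x,z}
=\sum_{y\in\cR_m(x)\cap\cR_{m'}(z)}\frac{1}{|\cR_m(x)|\,|\cR_{m'}(z)|}
=\begin{cases}\dfrac{1}{|\cR_m(x)|\,|\cR_{m'}(z)|}&\text{if }\cR_m(x)\cap\cR_{m'}(z)\neq\emptyset,\\ 0&\text{otherwise.}\end{cases}
\]
Because both factors are symmetric, $(\heatbathmove{\pi}{\cF}{m}\heatbathmove{\pi}{\cF}{m'})^T=\heatbathmove{\pi}{\cF}{m'}\heatbathmove{\pi}{\cF}{m}$, so the two matrices commute if and only if $\heatbathmove{\pi}{\cF}{m}\heatbathmove{\pi}{\cF}{m'}$ is symmetric, i.e.\ its $(x,z)$ and $(z,x)$ entries agree for all $x,z\in\cF$. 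Writing $\cR_1:=\cR_m(x)$, $\cR_1':=\cR_{m'}(x)$, $\cR_2:=\cR_m(z)$, $\cR_2':=\cR_{m'}(z)$, which satisfy $x\in\cR_1\cap\cR_1'$ and $z\in\cR_2\cap\cR_2'$, this symmetry is exactly the assertion that for every such quadruple of rays, $\cR_1\cap\cR_2'\neq\emptyset$ holds if and only if $\cR_1'\cap\cR_2\neq\emptyset$, and, when they do, $|\cR_1|\cdot|\cR_1'|^{-1}=|\cR_2|\cdot|\cR_2'|^{-1}$.

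It then remains to match this with the intersecting ray property of Definition~\ref{d:IntersectingRayProperty}. For the ``if'' direction, given $x,z$ the rays $\cR_1,\cR_1',\cR_2,\cR_2'$ satisfy the hypotheses of the definition, which supplies the implication $\cR_1\cap\cR_2'\neq\emptyset\Rightarrow\cR_1'\cap\cR_2\neq\emptyset$ together with the ratio identity; applying the property a second time after interchanging the pair $(\cR_1,\cR_1')$ with $(\cR_2,\cR_2')$---legitimate because its hypothesis is symmetric in the two pairs---yields the converse implication, hence the full biconditional that makes the product symmetric. For the ``only if'' direction, given arbitrary rays $\cR_1,\cR_2$ parallel to $m$ and $\cR_1',\cR_2'$ parallel to $m'$ with $\cR_1\cap\cR_1'$ and $\cR_2\cap\cR_2'$ both non-empty, I would pick $x\in\cR_1\cap\cR_1'$ and $z\in\cR_2\cap\cR_2'$; uniqueness of the ray through a point in a given direction forces $\cR_m(x)=\cR_1$, $\cR_{m'}(x)=\cR_1'$, $\cR_m(z)=\cR_2$, $\cR_{m'}(z)=\cR_2'$, and symmetry of the product at $(x,z)$ is precisely the conclusion demanded by the definition. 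The only genuinely delicate point I foresee is this last matching: one must notice that the one-directional ``implies'' in the intersecting ray property upgrades, by the symmetry of its own hypothesis, to the biconditional that commutativity requires, and that the at-most-one-point intersection of non-collinear rays is exactly what keeps the entries of the product in the clean form displayed above.
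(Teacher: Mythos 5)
Your proof is correct and follows essentially the same route as the paper: compute the product $\heatbathmove{\pi}{\cF}{m}\heatbathmove{\pi}{\cF}{m'}$ entrywise using that non-collinear rays intersect in at most one point, and then read the intersecting ray property off the resulting formula. Your writeup is in fact more careful than the paper's, which only states "it is easy to see" after displaying the entry formula; in particular you make explicit two genuinely useful steps that the paper leaves silent, namely the reduction of commutativity to symmetry of a single product (valid because each factor is symmetric), and the observation that swapping the pairs $(\cR_1,\cR_1')\leftrightarrow(\cR_2,\cR_2')$ upgrades the one-directional implication in Definition~\ref{d:IntersectingRayProperty} to the biconditional that symmetry of the product actually requires.
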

\begin{proof}
Let $u_1,u_2\in\cF$. Then 
\begin{equation*}
(\heatbathmove{\pi}{\cF}{m}\cdot\heatbathmove{\pi}{\cF}{m'})_{u_1,u_2}=
\begin{cases}
|\polyray{u_1}{m}{\cF}|^{-1}\cdot|\polyray{u_2}{m'}{\cF}|^{-1},&\text{ if
}\polyray{u_1}{m}{\cF}\cap\polyray{u_2}{m'}{\cF}\neq\emptyset\\
0,&\text{ otherwise}
\end{cases}.
\end{equation*}
Let $\cR_1:=\polyray{u_1}{m}{\cF}$, $\cR_1':=\polyray{u_1}{m'}{\cF}$,
$\cR_2:=\polyray{u_2}{m}{\cF}$, and $\cR_2':=\polyray{u_2}{m'}{\cF}$
Thus,
$(\heatbathmove{\pi}{\cF}{m}\cdot\heatbathmove{\pi}{\cF}{m'})_{u_1,u_2}=(\heatbathmove{\pi}{\cF}{m'}\cdot\heatbathmove{\pi}{\cF}{m})_{u_1,u_2}$.
It is easy to see that the matrices commute if and only if $(m,m')$
have the intersecting ray property.
\end{proof}

\begin{lemma}\label{l:DiagonizableCommutingMatrices}
Let $H_1,\dots,H_n\in\RR^{n\times n}$ be pairwise
commuting matrices. Then any eigenvalue of $\sum_{i=1}^nH_i$ has the
form $\lambda_1+\dots+\lambda_n$ where $\lambda_i$ is an eigenvalue
of~$H_i$.
\end{lemma}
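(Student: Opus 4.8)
**

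The plan is to prove this by simultaneous diagonalization of the commuting family $H_1,\dots,H_n$. The statement itself is not about the heat-bath walk specifically—it is the general linear-algebra fact that a commuting family of matrices can be put into a common eigenbasis, after which the eigenvalues of the sum are sums of eigenvalues along the common eigenvectors. The only subtlety is that the $H_i$ need not be symmetric (the hypothesis only says they pairwise commute over $\RR$), so I should not invoke the spectral theorem directly; in the application each $H_i = \heatbathmove{\pi}{\cF}{m_i}$ is symmetric, but as a standalone lemma it is cleaner to argue over $\CC$.

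\begin{proof}
We argue over $\CC$; since the matrices have real entries their real eigenvalues are among the complex ones, so it suffices to prove the statement for $\CC$-eigenvalues. Recall the standard fact that a finite family of pairwise commuting matrices over an algebraically closed field has a common eigenvector: one proceeds by induction on $n$, taking an eigenvalue $\mu$ of $H_1$, noting that each $H_j$ maps the eigenspace $E=\ker(H_1-\mu I)$ into itself because $H_jH_1=H_1H_j$, and applying the inductive hypothesis to the restrictions $H_2|_E,\dots,H_n|_E$.

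Now let $\nu$ be an eigenvalue of $H:=\sum_{i=1}^n H_i$, and let $W:=\ker(H-\nu I)\neq\{0\}$. Since each $H_i$ commutes with every other $H_j$, it commutes with $H$, hence $H_i(W)\subseteq W$ for all $i$. Thus $H_1|_W,\dots,H_n|_W$ is again a pairwise commuting family on the nonzero space $W$, so by the previous paragraph there is a common eigenvector $v\in W\setminus\{0\}$ with $H_i v=\lambda_i v$ for some $\lambda_i\in\CC$, where $\lambda_i$ is an eigenvalue of $H_i$. Applying $H$ to $v$ gives
\begin{equation*}
\nu v = Hv = \sum_{i=1}^n H_i v = \Big(\sum_{i=1}^n \lambda_i\Big) v,
\end{equation*}
and since $v\neq 0$ we conclude $\nu=\lambda_1+\dots+\lambda_n$.
\end{proof}

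The only point requiring care is the passage between real and complex eigenvalues and the non-symmetry of the $H_i$: I avoid the spectral theorem and use only the existence of a common eigenvector over an algebraically closed field, which does not need diagonalizability. In the application every $\heatbathmove{\pi}{\cF}{m}$ is symmetric with spectrum $\{0,1\}$ (Proposition~\ref{p:MoveMatrix}), so the resulting eigenvalue bound for $\heatbath{\pi}{f}{\cF}{\cM}$ will be a sum $\sum_{m} f(m)\lambda_m$ with each $\lambda_m\in\{0,1\}$; but for the lemma as stated no such specialization is needed.
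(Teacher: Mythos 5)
Your proof is correct. The paper handles this by citing simultaneous upper-triangularizability of commuting matrices (Horn \& Johnson, Theorem 2.4.8.1): in a common triangularizing basis the eigenvalues of each $H_i$ appear on the diagonal, so the diagonal entries of $\sum H_i$ are sums of eigenvalues of the summands. Your route is to pass to the eigenspace $W$ of $\sum H_i$ for a fixed eigenvalue $\nu$, observe that the $H_i$ preserve $W$, and extract a common eigenvector there. This is the same underlying mechanism (the existence of a common eigenvector is the inductive engine behind simultaneous triangularization), but your version is self-contained and localizes directly to the eigenvalue of interest, which is arguably cleaner than invoking the full triangularization theorem. One small caveat you already flag: over $\RR$, the individual $\lambda_i$ in the decomposition $\nu=\sum\lambda_i$ need only be complex eigenvalues of $H_i$; the lemma as stated does not assert they are real, and in the paper's application each $H_i=\heatbathmove{\pi}{\cF}{m_i}$ is symmetric so this is harmless. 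No gap.
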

\begin{proof}
This is a straightforward extension of the case $n=2$
in~\cite[Theorem~2.4.8.1]{Horn2013} and relies on the fact that
commuting matrices are simultaneously triangularizable.
\end{proof}

\begin{prop}
Let $\cF\subset\ZZ^d$ and $\cM\subset\ZZ^d$ be finite sets and suppose
there exists $m\in\cM$ such that $(m,m')$ has the intersecting ray
property in $\cF$ for all $m'\in\cM':=\cM\setminus\{m\}$. Let
$\cV_1,\dots,\cV_c$ be the connected components of
$\cF(\cM')$, $\pi_i:\cV_i\to[0,1]$ the uniform distribution, and
$f'=(1-f(m))^{-1}\cdot f|_{\cM'}$, then
\begin{equation*}
\lambda(\heatbath{\pi}{f}{\cF}{\cM})\le
f(m)+(1-f(m))\cdot\max\{\lambda(\heatbath{\pi_i}{f'}{\cV_i}{\cM'}):i\in
[c]\}.
\end{equation*}
\end{prop}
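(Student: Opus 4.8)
The plan is to diagonalise $\heatbath{\pi}{f}{\cF}{\cM}$ by exploiting the commutativity granted by the intersecting ray property. Put $\cM':=\cM\setminus\{m\}$, assume $0<f(m)<1$ and $f$ positive on $\cM$ (the general case follows by discarding the moves of weight $0$), and write
\begin{equation*}
\heatbath{\pi}{f}{\cF}{\cM}=f(m)\cdot\heatbathmove{\pi}{\cF}{m}+(1-f(m))\cdot H,\qquad H:=\sum_{m'\in\cM'}f'(m')\cdot\heatbathmove{\pi}{\cF}{m'}=\heatbath{\pi}{f'}{\cF}{\cM'}.
\end{equation*}
Since $(m,m')$ has the intersecting ray property in $\cF$ for every $m'\in\cM'$, Proposition~\ref{p:CommutingMoveMatrices} makes $\heatbathmove{\pi}{\cF}{m}$ commute with each $\heatbathmove{\pi}{\cF}{m'}$, hence with their convex combination $H$. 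As $\pi$ is uniform, Proposition~\ref{p:MoveMatrix} shows $\heatbathmove{\pi}{\cF}{m}$ is a symmetric orthogonal projection with $\Spec(\heatbathmove{\pi}{\cF}{m})=\{0,1\}$, and $H$ is symmetric too, so the two are simultaneously orthogonally diagonalisable; I would fix a common orthonormal eigenbasis $v_1,\dots,v_{|\cF|}$ of $\RR^{\cF}$ with $v_1$ proportional to $\1$. Denoting by $\alpha_j\in\{0,1\}$ and $\beta_j$ the eigenvalues of $\heatbathmove{\pi}{\cF}{m}$ and of $H$ on $v_j$, the vector $v_j$ is an eigenvector of $\heatbath{\pi}{f}{\cF}{\cM}$ for the value $f(m)\alpha_j+(1-f(m))\beta_j$; this is the mechanism behind Lemma~\ref{l:DiagonizableCommutingMatrices}.

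The second step is to describe $\Spec(H)$ through the partition $\cF=\cV_1\sqcup\dots\sqcup\cV_c$. Any ray of $\cF$ along an element of $\cM'$ lies inside a single component $\cV_i$ of $\cF(\cM')$, so $H$ transports no mass between distinct $\cV_i$ and is block diagonal along this partition; since $\pi$ restricted and renormalised to $\cV_i$ is $\pi_i$, the block of $H$ on $\cV_i$ is exactly $\heatbath{\pi_i}{f'}{\cV_i}{\cM'}$. Hence $\Spec(H)=\bigcup_{i=1}^c\Spec(\heatbath{\pi_i}{f'}{\cV_i}{\cM'})$, and by Proposition~\ref{p:HeatBath} every block has only non-negative eigenvalues with largest one equal to $1$. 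So each $\beta_j$ lies in $\{1\}\cup[0,\mu]$, where $\mu:=\max\{\lambda(\heatbath{\pi_i}{f'}{\cV_i}{\cM'}):i\in[c]\}$.

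It then remains to read off $\lambda(\heatbath{\pi}{f}{\cF}{\cM})$. All eigenvalues of $\heatbath{\pi}{f}{\cF}{\cM}$ are non-negative (Proposition~\ref{p:HeatBath}), so $\lambda(\heatbath{\pi}{f}{\cF}{\cM})$ is its second largest eigenvalue, namely $\max_{j\ge2}(f(m)\alpha_j+(1-f(m))\beta_j)$. Whenever $j\ge2$ and $\beta_j\le\mu$ this is at most $f(m)+(1-f(m))\mu$, which is the target inequality, so everything reduces to the eigenvectors with $\beta_j=1$. For such a $v_j$ the equality $Hv_j=v_j$ forces $\heatbathmove{\pi}{\cF}{m'}v_j=v_j$ for every $m'\in\cM'$ (each being an orthogonal projection), hence by Proposition~\ref{p:IntersectionTheorem} the vector $v_j$ lies in the span of the indicators $\sum_{x\in\cV_1}e_x,\dots,\sum_{x\in\cV_c}e_x$; intersecting with $\img(\heatbathmove{\pi}{\cF}{m})$ and applying Proposition~\ref{p:IntersectionTheorem} to the whole set $\cM$ shows that those $v_j$ with additionally $\alpha_j=1$ lie in the span of the indicators of the connected components of $\cF^c(\cM)$, which by connectivity of $\cF^c(\cM)$ contains no nonzero vector orthogonal to $\1$. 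I expect the genuine difficulty to be the remaining subcase $\beta_j=1$, $\alpha_j=0$: here $v_j\in\ker(\heatbathmove{\pi}{\cF}{m})$ is constant on each $\cV_i$, and one has to use the fine interplay between the $m$-rays and the components $\cV_i$ -- again through Proposition~\ref{p:IntersectionTheorem} and the intersecting ray property -- to see that such an eigenvector cannot push $\lambda(\heatbath{\pi}{f}{\cF}{\cM})$ above $f(m)+(1-f(m))\mu$; once this is established, collecting the cases yields the bound.
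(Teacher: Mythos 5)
Your proposal follows the paper's own strategy essentially verbatim: write $\heatbath{\pi}{f}{\cF}{\cM}=f(m)\heatbathmove{\pi}{\cF}{m}+(1-f(m))\cH$, invoke Proposition~\ref{p:CommutingMoveMatrices} and the intersecting ray hypothesis to make the two summands commute, use the block structure of $\cH$ over the components $\cV_i$, then read off the spectrum of the sum via Lemma~\ref{l:DiagonizableCommutingMatrices}. You correctly observe that the argument is immediate for joint eigenvectors with $\alpha_j=1$ or $\beta_j<1$, and you correctly flag the subcase $\alpha_j=0$, $\beta_j=1$ as the genuine obstruction, which you leave open in the hope that a finer analysis of the rays and components resolves it. The paper's own proof is less candid: it ends with ``and thus the statement follows'', silently skipping exactly this case.

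The gap you identify is real and cannot be closed, because the inequality as stated is actually false. A joint eigenvector with $\alpha_j=0$ and $\beta_j=1$ contributes the eigenvalue $1-f(m)$, which for small $f(m)$ can exceed $f(m)+(1-f(m))\mu$. Concretely, take $\cF=[3]\times[3]$, $\cM=\{e_1,e_2\}$, $\pi$ uniform, $f(e_1)=\tfrac{1}{10}$, $f(e_2)=\tfrac{9}{10}$, and $m=e_1$ (the pair $(e_1,e_2)$ has the intersecting ray property on any box). The components $\cV_i$ of $\cF(\{e_2\})$ are the columns, each block of $\cH$ is the $3\times 3$ matrix with every entry $\tfrac{1}{3}$, whose second largest eigenvalue modulus is $0$; hence $\mu=0$ and the claimed bound is $\tfrac{1}{10}$. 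But Proposition~\ref{p:HeatBathOnHyperrectangle} gives $\lambda(\heatbath{\pi}{f}{\cF}{\cM})=1-\min(f)=\tfrac{9}{10}$, achieved by eigenvectors constant on each column and summing to zero across each row -- precisely $\alpha_j=0$, $\beta_j=1$. A correct statement must incorporate this contribution, e.g.\ $\lambda(\heatbath{\pi}{f}{\cF}{\cM})\le\max\{1-f(m),\,f(m)+(1-f(m))\mu\}$, which does follow from the case analysis you set up once one notes that $\alpha_j=\beta_j=1$ only occurs on the constant vector when $\cM$ is a Markov basis.
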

\begin{proof}
Let $\cH:=\heatbath{\pi}{f'}{\cF}{\cM'}$
be the heat-bath random walk on $\cF(\cM)$ that samples moves from
$\cM'$ according to $f'$, then
$\heatbath{\pi}{f}{\cF}{\cM}=f(m)\cdot\heatbathmove{\pi}{\cF}{m}+(1-f(m))\cdot
\cH$.
By assumption, all pairs $(m,m')$ with $m'\in\cM'$ have the intersecting ray
property and thus the matrices $\heatbathmove{\pi}{\cF}{m}$ and
$\cH$ commute according to
Proposition~\ref{p:CommutingMoveMatrices}. The eigenvalues of all
involved matrices are non-negative and thus
Lemma~\ref{l:DiagonizableCommutingMatrices} implies that the second largest
eigenvalue of $\heatbath{\pi}{f}{\cF}{\cM}$ has the form
$\lambda+\lambda'$ where $\lambda\in\{0,f(m)\}$ by
Proposition~\ref{p:MoveMatrix} and where $\lambda'$
is an eigenvalue of $(1-f(m))\cdot\cH$.
The matrix $\cH$ is a block matrix
whose building blocks are the matrices
$\heatbath{\pi}{f'}{\cV_i}{\cM'}=\heatbath{\pi_i}{f'}{\cV_i}{\cM'}$
and thus the statement follows.
\end{proof}

\begin{prop}\label{p:UpperSLEMBound}
Let $\cF\subset\ZZ^d$ and $\cM\subset\ZZ^k$ be finite sets. If
$\cF(\cM)$ has the intersecting ray
property, then $\lambda(\heatbath{\pi}{f}{\cF}{\cM})\le 1-\min(f)$.
\end{prop}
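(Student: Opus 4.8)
The plan is to use the commutativity structure established in Proposition~\ref{p:CommutingMoveMatrices} together with the spectral decomposition result of Lemma~\ref{l:DiagonizableCommutingMatrices}. Since $\cF(\cM)$ has the intersecting ray property, every pair $(m,m')$ of non-collinear moves from $\cM$ has the intersecting ray property in $\cF$, so by Proposition~\ref{p:CommutingMoveMatrices} the move matrices $\heatbathmove{\pi}{\cF}{m}$ for $m\in\cM$ pairwise commute. (For collinear $m,m'$ in $\cM$ our standing assumption that no proper integer multiple of a move lies in $\cM$ forces $m'=\pm m$, hence $\heatbathmove{\pi}{\cF}{m}=\heatbathmove{\pi}{\cF}{m'}$, which trivially commutes.) Writing $\cM=\{m_1,\dots,m_k\}$, the heat-bath walk is $\heatbath{\pi}{f}{\cF}{\cM}=\sum_{i=1}^k f(m_i)\heatbathmove{\pi}{\cF}{m_i}$.

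First I would apply Lemma~\ref{l:DiagonizableCommutingMatrices} to the $k$ pairwise commuting matrices $H_i:=f(m_i)\heatbathmove{\pi}{\cF}{m_i}$: every eigenvalue $\mu$ of $\heatbath{\pi}{f}{\cF}{\cM}$ has the form $\mu=\sum_{i=1}^k \mu_i$, where each $\mu_i$ is an eigenvalue of $H_i$. By Proposition~\ref{p:MoveMatrix}(5) the spectrum of $\heatbathmove{\pi}{\cF}{m_i}$ is $\{0,1\}$, so $\mu_i\in\{0,f(m_i)\}$. Hence every eigenvalue of the heat-bath walk is of the form $\sum_{i\in T}f(m_i)$ for some subset $T\subseteq[k]$. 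The largest such value is $\sum_{i=1}^k f(m_i)=1$ (attained on the all-ones vector, i.e.\ the stationary distribution since $\pi$ is uniform), and the next largest possible value is obtained by dropping the smallest weight, namely $1-\min_i f(m_i)=1-\min(f)$. Since all eigenvalues are non-negative (Proposition~\ref{p:HeatBath}, or directly Proposition~\ref{p:MoveMatrix}), we get $\lambda(\heatbath{\pi}{f}{\cF}{\cM})=\lambda_2(\heatbath{\pi}{f}{\cF}{\cM})\le 1-\min(f)$.

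The one point requiring a little care — and the main obstacle — is making precise that the eigenvalue $1$ really does occur with the all-ones eigenvector and that no eigenvalue strictly between $1-\min(f)$ and $1$ arises; this is immediate from the subset description $\sum_{i\in T}f(m_i)$ once we observe that the only subset giving a value exceeding $1-\min(f)$ is $T=[k]$. Concretely: if $T\neq[k]$ then $[k]\setminus T$ contains some index $j$, so $\sum_{i\in T}f(m_i)\le 1-f(m_j)\le 1-\min(f)$. One should also note that Lemma~\ref{l:DiagonizableCommutingMatrices} gives that \emph{every} eigenvalue has the stated additive form, which is exactly what lets us enumerate the full spectrum rather than just bound one eigenvalue. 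No further analysis of the ray structure is needed beyond invoking the intersecting ray property once to secure commutativity.
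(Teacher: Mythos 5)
Your proof follows the paper's argument essentially verbatim: invoke the intersecting ray property and Proposition~\ref{p:CommutingMoveMatrices} to get pairwise commutativity of the scaled move matrices, apply Lemma~\ref{l:DiagonizableCommutingMatrices} with the spectrum $\{0,f(m_i)\}$ from Proposition~\ref{p:MoveMatrix}, and conclude that every eigenvalue is a subset sum $\sum_{i\in T}f(m_i)$, so the second largest is at most $1-\min(f)$. Your explicit note that collinear moves in $\cM$ reduce (under the section's standing assumption) to $m'=\pm m$ and hence give identical move matrices is a small point the paper elides, but it is the same proof.
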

\begin{proof}
Let $\cM=\{m_1,\dots,m_k\}$. The intersecting ray property and
Proposition~\ref{p:CommutingMoveMatrices} give that
the matrices
$f(m_1)\cdot\heatbathmove{\pi}{\cF}{m_i},\dots,f(m_k)\cdot\heatbathmove{\pi}{\cF}{m_k}$
commute pairwise. According to Proposition~\ref{p:MoveMatrix}, the
eigenvalues of
$f(m_i)\cdot\heatbathmove{\pi}{\cF}{m_i}$ are $\{0,f(m_i)\}$.
Lemma~\ref{l:DiagonizableCommutingMatrices} gives that the second
largest eigenvalue of $\heatbath{\pi}{f}{\cF}{\cM}$, which equals the
second largest eigenvalue modulus since all of its eigenvalues are
non-negative, fulfills
$\lambda(\heatbath{\pi}{f}{\cF}{\cM})=\sum_{i\in
I}f(m_i)$ for a subset $I\subseteq[k]$. Since
$\lambda(\heatbath{\pi}{f}{\cF}{\cM})<1$ and $\sum_{i=1}^kf(m_i)=1$, we
have $I\neq[k]$ and the claim follows.
\end{proof}

\begin{prop}\label{p:HeatBathOnHyperrectangle}
Let $n_1,\dots,n_d\in\NN_{>1}$,
$\cF=[n_1]\times\cdots\times[n_d]$, and
$\cM=\{e_1,\dots,e_d\}$. Then for any positive
mass function $f:\cM\to[0,1]$,
$\lambda(\heatbath{\pi}{f}{\cF}{\cM})=1-\min(f)$. 
\end{prop}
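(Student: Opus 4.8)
The statement is an equality, so I would establish the two inequalities separately, and the upper bound is essentially free. By the example following Definition~\ref{d:IntersectingRayProperty}, the compressed fiber graph $\cF^c(\cM)$ on $[n_1]\times\dots\times[n_d]$ with $\cM=\{e_1,\dots,e_d\}$ has the intersecting ray property, so Proposition~\ref{p:UpperSLEMBound} applies verbatim and gives $\lambda(\heatbath{\pi}{f}{\cF}{\cM})\le 1-\min(f)$. Positivity of $f$ just guarantees here that $\min(f)>0$. Hence all of the content lies in the matching lower bound $\lambda(\heatbath{\pi}{f}{\cF}{\cM})\ge 1-\min(f)$.

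For the lower bound the plan is to invoke Proposition~\ref{p:KernelRayMat} with a well-chosen $\cM'$. I would fix an index $j\in[d]$ with $f(e_j)=\min(f)$ and put $\cM':=\{e_j\}$, so that $\sum_{m\in\cM'}f(m)=\min(f)$. By Proposition~\ref{p:KernelRayMat} it then suffices to show that the ray matrix $\rayMat{\cF}{\cM}{\cM'}$ (Definition~\ref{d:RayMatrix}) has a nonzero kernel. To do this I would determine both index sets of this matrix explicitly: the connected components of $\cF(\cM\setminus\cM')=\cF(\{e_i:i\ne j\})$ are exactly the $n_j$ coordinate slices $\{x\in\cF:x_j=c\}$ for $c\in[n_j]$ — each such slice is a full grid $\prod_{i\ne j}[n_i]$ and hence connected through the moves $e_i$ with $i\ne j$, while none of those moves changes the $j$-th coordinate — and the rays through $\cF$ along $e_j$ are the $\prod_{i\ne j}n_i$ lines obtained by fixing every coordinate except the $j$-th, each of length $n_j$.

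The key observation is then immediate: every $e_j$-ray meets every slice in exactly one point, so $\rayMat{\cF}{\cM}{\cM'}$ is the all-ones matrix of size $\bigl(\prod_{i\ne j}n_i\bigr)\times n_j$. This matrix is nonzero of rank $1$, so its kernel has dimension $n_j-1$, which is $\ge 1$ because $n_j>1$; in particular $\ker(\rayMat{\cF}{\cM}{\cM'})\ne\{0\}$. Proposition~\ref{p:KernelRayMat} now yields $\lambda(\heatbath{\pi}{f}{\cF}{\cM})\ge 1-f(e_j)=1-\min(f)$, and combining with the upper bound gives the claimed equality.

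I do not expect a genuine obstacle here: once the machinery of Propositions~\ref{p:UpperSLEMBound} and~\ref{p:KernelRayMat} is in place, the only step requiring care is the bookkeeping in the previous paragraph — correctly identifying the connected components of $\cF(\cM\setminus\{e_j\})$ with the coordinate slices, and checking that each $e_j$-ray hits each slice exactly once — after which the rank-one computation and the conclusion are routine.
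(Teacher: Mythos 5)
Your proposal is correct and follows essentially the same route as the paper: both establish the upper bound via the intersecting ray property and Proposition~\ref{p:UpperSLEMBound}, and both get the lower bound by taking $\cM'=\{e_j\}$ with $f(e_j)=\min(f)$, observing that the ray matrix is the all-ones matrix (each $e_j$-ray meeting each slice exactly once), and invoking Proposition~\ref{p:KernelRayMat}.
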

\begin{proof}
It is easy to verify that $\cF^c(\cM)$ has the intersecting ray property
and thus Proposition~\ref{p:UpperSLEMBound} shows
$\lambda(\heatbath{\pi}{f}{\cF}{\cM})\le 1-\min(f)$. Assume that
$\min(f)=f(e_i)$. The connected
components of $\cF^c(\{e_1,\dots,e_d\}\setminus\{e_i\})$ are the layers $V_j:=\{u\in
\cF: u_i=j\}$ for any $j\in[n_i]$ and the rays through $\cF$ parallel are
$\cR_{k}:=\{(0,k)+s\cdot e_i: s\in[n_i]\}$ for
$k=(k_1,\dots,k_{i-1},k_{i+1},\dots,k_d)\in[n_1]\times\cdots\times[n_{i-1}]\times[n_{i+1}]\times\cdots\times[n_d]$. In particular, any
ray intersects any connected component exactly once. Thus, the matrix
$(|R_k\cap V_j|)_{k,j}$ is the all-ones matrix, which has a
non-trivial kernel. Proposition~\ref{p:KernelRayMat} implies 
$\lambda(\heatbath{\pi}{f}{\cF}{\cM})\ge 1-f(e_i)$.
\end{proof}

\begin{remark}\label{r:RooksWalk}
In the special case $n:=n_1=\dots=n_d$ and
$f:\{e_1,\dots,e_d\}\to[0,1]$ the uniform distribution in
Proposition~\ref{p:HeatBathOnHyperrectangle}, the heat-bath
random walk on $[n]^d$ is known as \emph{Rook's walk} in the
literature. In this case, Proposition~\ref{p:HeatBathOnHyperrectangle}
is exactly~\cite[Proposition~2.3]{Kim2012}.
In~\cite{Mcleman2015}, upper bounds on the mixing time of the Rook's
walk were obtained with \emph{path-coupling}.
\end{remark}

The stationary distribution of the heat-bath random walk is
independent of the actual mass function on the Markov moves. 
The problem of finding the mass function which leads to the fastest
mixing behaviour can be formulated as the following optimization
problem:
\begin{equation}\label{equ:SLEMOpti}
\arg\min\left\{\lambda(\heatbath{\pi}{f}{\cF}{\cM}): f:\cM\to(0,1),
\sum_{m\in\cM}f(m)=1\right\}.
\end{equation}
It follows from
Proposition~\ref{p:HeatBathOnHyperrectangle} that the optimal value of
\eqref{equ:SLEMOpti} for $\cF=[n_1]\times\cdots\times[n_d]$,
$\cM=\{e_1,\dots,e_d\}$, and the uniform distribution $\pi$ on $\cF$
is the uniform distribution on $\cM$. Another example where the
uniform distribution is the optimal solution to~\eqref{equ:SLEMOpti},
but where the verification is more involved, is presented in
Example~\ref{ex:SolvingSLEMOpti}.

\begin{example}\label{ex:SolvingSLEMOpti}
Let $\cF=[2]\times[5]$ as in Example~\ref{ex:AddingMovesSlowingChain}
and consider $\cM=\{e_1,2e_1+e_2\}$.
We investigate for which $\mu\in(0,1)$, the transition matrix
$\mu\heatbathmove{\pi}{\cF}{e_1}+(1-\mu)\heatbathmove{\pi}{\cF}{2e_1+e_2}$ has the
smallest second largest eigenvalue modulus. Its characteristic polynomial in $\QQ[\mu,x]$ is
$$-\frac{1}{25}x^4(x-1)(\mu+x-1)^6(-5x^2+5x+2\mu^2-2\mu)(-5x^2+5x+4\mu^2-4\mu)$$
and hence its eigenvalues are 
\begin{equation*}
\begin{split}
x_1(\mu):=1,& \quad x_2(\mu):=1-\mu,\\
x_3(\mu):=\frac{1}{2}\left[1+\sqrt{1+\frac{8}{5}(\mu^2-\mu)}\right],
&\quad
x_4(\mu):=\frac{1}{2}\left[1-\sqrt{1+\frac{8}{5}(\mu^2-\mu)}\right],\\
x_5(\mu):=\frac{1}{2}\left[1+\sqrt{1+4(\mu^2-\mu)}\right],&\quad
x_6(\mu):=\frac{1}{2}\left[1-\sqrt{1+4(\mu^2-\mu)}\right].\\
\end{split}
\end{equation*}
It is straightforward to check that $x_5(\mu)>\frac{1}{2}> x_6(\mu)$, $x_3(\mu)>\frac{1}{2}>
x_4(\mu)$. Since $\mu^2-\mu<0$ for $u\in(0,1)$ and 
$x_3(\mu)\ge x_6(\mu)$. We can show that
$x_4(\mu)\ge x_2(\mu)$ and thus
\begin{equation*}
\lambda(\mu\heatbathmove{\pi}{\cF}{e_1}+(1-\mu)\heatbathmove{\pi}{\cF}{2e_1+e_2})=\frac{1}{2}\left[1+\sqrt{1+\frac{8}{5}(\mu^2-\mu)}\right].
\end{equation*}
The fastest heat-bath random walk on $\cF(\cM)$ which converges to
uniform is thus obtained for
$\mu=\frac{1}{2}$, i.e. when the moves are selected uniformly. The
second largest eigenvalue in this case is
$\frac{1}{10}(5+\sqrt{15})\approx 0.887$, which is larger than the
second largest eigenvalue of the heat-bath walk that selects uniformly
from $\{e_1,e_2\}$ (see Proposition~\ref{p:HeatBathOnHyperrectangle}).
\end{example}

\section{Augmenting Markov bases}

It follows from our investigation in Section~\ref{s:Diameter} that the
diameter of all compressed fiber graphs coming from a fixed integer matrix
$A\in\ZZ^{m\times d}$ can be bounded from above by a constant.
However, Markov moves can be used twice in a minimal path which can
make the diameter of the compressed fiber graph larger than the size
of the Markov basis. The next definition puts more constraints on the
Markov basis and postulates the existence of a path that uses every
move from the Markov basis at most once.

\begin{defn}\label{d:Augmentation}
Let $\cF\subset\ZZ^d$ be a finite set and
$\cM=\{m_1,\dots,m_k\}\subset\ZZ^d$. An \emph{augmenting path} between
distinct $u,v\in\cF$ of length $r\in\NN$ is a path in $\cF^c(\cM)$ of
the form
\begin{equation*}
u\to u+\lambda_{i_1}m_{i_1}\to u+\lambda_{i_1}m_{i_1}+\lambda_{i_2}m_{i_2}
\to\cdots \to u+\sum_{k=1}^r\lambda_{i_k}m_{i_k}=v
\end{equation*}
with distinct indices $i_1,\dots,i_r\in[k]$. An augmenting path is
\emph{minimal} for $u,v\in\cF$ if there exists no shorter augmenting
path between $u$ and $v$ in $\cF^c(\cM)$. 
A Markov basis $\cM$ for $\cF$ is \emph{augmenting} if there is an
augmenting path between any distinct nodes in $\cF$. The
\emph{augmentation length} $\auglen{\cM}{\cF}$ of an augmenting Markov
basis $\cM$ is the maximum length of all minimal augmenting paths in
$\cF^c(\cM)$.
\end{defn}

Not every Markov basis is augmenting (see Example~\ref{ex:StairCase}),
but the diameter of compressed fiber graphs that use an augmenting
Markov basis is at most the number of the moves. For fiber graphs
coming from an integer matrix, an augmenting Markov basis for all of its
fibers can be computed (Remark~\ref{r:GraverAugmentation}).

\begin{remark}\label{r:GraverAugmentation}
Let $A\in\ZZ^{m\times d}$  with $\ker_\ZZ(A)\cap\NN^d=\{0\}$ and let
$b\in\cone{A}$. The
Graver basis is an augmenting Markov basis for
$\fiber{A}{b}$ for any $b\in\cone{A}$. We claim that when $A$ is
totally unimodular, then $\auglen{\graver{A}}{\fiber{A}{b}}\le d^2(\rank(A)+1)$. In
particular, the augmentation length is independent of the right-hand
side $b$. Let $u,v\in\fiber{A}{b}$ be arbitrary and for $i\in\NN$, let
$l_i:=\min\{u_i,v_i\}$, $w_i:=\max\{u_i,v_i\}$, and
$c_i:=\mathrm{sign}(u_i-v_i)\in\{-1,0,1\}$.
Then $v$ is the unique optimal value of the linear integer optimization
problem
\begin{equation*}
\min\{c^Tx: Ax=b, l\le x\le w,x\in\ZZ^d\}.
\end{equation*}
A \emph{discrete steepest decent} as defined in
\cite[Definition~3]{loera-augmentation} using Graver moves needs at most
$\|c\|_1\cdot d\cdot(\rank(A)+1)\le d^2\cdot(\rank(A)+1)$ many
augmentations from $u$ to reach the optimal value $v$. We refer to 
\cite[Corollary~8]{loera-augmentation} which ensures that every Graver move is used
at most once. Note that in~\cite{loera-augmentation}, $x$ is constrained to
$x\ge 0$ instead to $x\ge l$, but their argument works for any lower
bound.
\end{remark}

\begin{example}\label{ex:AugmentingMarkovBasis}
Fix $d\in\NN$ and consider $A$ and $\cM$ from
Example~\ref{ex:NonNormReducingNormLike}. We show that $\cM$ is an
augmenting Markov basis for $\fiber{A}{b}$ for any $b\in\NN$.
Let $u,v\in\fiber{A}{b}$ be distinct, then there exists $i\in[d]$ such
that $u_i>v_i$ or $u_i<v_i$, thus, we can walk from $u$ to
$u':=u+(u_i-v_i)(e_1-e_i)$ or from $v$ to $v':=v+(v_i-u_i)(e_1-e_i)$.
In any case, after that augmentation, the pairs $(u',v)$ and $(v',u)$
coincide in the $i$th coordinate and thus we find an augmenting path
by induction on the dimension $d$. We have used at most $d-1$ many
edges in these paths and hence $\auglen{\cM}{\fiber{A}{b}}\le d-1$ for
all $b\in\NN$.
\end{example}

We now show that the augmentation length is essentially bounded from
below by the dimension of the node set and hence the bound observed in
Example~\ref{ex:AugmentingMarkovBasis} cannot be improved. We first need the
following lemma.

\begin{lemma}\label{l:VectorSpace}
Let $v_1,\dots,v_k\in\QQ^d$ such that any
$v\in\spannQQ{v_1,\dots,v_k}$ can be represented by a linear
combination of $r$ vectors. Then
$\dim(\spannQQ{v_1,\dots,v_k})\le r$.
\end{lemma}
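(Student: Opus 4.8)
The plan is to prove the statement by contradiction, reducing it to the classical fact that a vector space over an infinite field is never a finite union of proper subspaces. Set $V:=\spannQQ{v_1,\dots,v_k}$ and $n:=\dim(V)$; we may assume $k\ge r$, since otherwise $n\le k<r$ and there is nothing to prove. Suppose for contradiction that $n>r$. For each $r$-element subset $S\subseteq[k]$ put $W_S:=\spannQQ{v_i:i\in S}$, a subspace of $V$ with $\dim(W_S)\le r<n$, hence a \emph{proper} subspace of $V$. The hypothesis says exactly that every $v\in V$ is a $\QQ$-linear combination of at most $r$ of the generators $v_1,\dots,v_k$, and any such combination lies in $W_S$ for a suitable $r$-set $S$ (pad the used index set to size $r$, which is possible as $k\ge r$). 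Thus $V=\bigcup_S W_S$, a finite union of proper subspaces --- a contradiction. Therefore $n\le r$.

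To keep the argument self-contained I would insert the standard one-paragraph proof of the union fact: write $V=V_1\cup\dots\cup V_m$ with $m$ minimal and each $V_i$ a proper subspace, so $m\ge 2$; by minimality choose $x\in V_1\setminus\bigcup_{i\ge 2}V_i$, and since $V_1\subsetneq V$ choose $y\in V\setminus V_1$. For every $\lambda\in\QQ$ we have $y+\lambda x\notin V_1$ (else $y=(y+\lambda x)-\lambda x\in V_1$), hence $y+\lambda x\in\bigcup_{i\ge 2}V_i$; as $\QQ$ is infinite, two distinct scalars $\lambda\neq\mu$ land in the same $V_j$ with $j\ge 2$, whence $(\lambda-\mu)x\in V_j$ and so $x\in V_j$, contradicting the choice of $x$.

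I do not expect a genuine obstacle here; the only point that needs care is the bookkeeping in the reduction, namely reading ``linear combination of $r$ vectors'' as ``a combination of at most $r$ of the prescribed generators $v_1,\dots,v_k$'', so that the subspaces $W_S$ range over a \emph{finite} family and the displayed union is legitimately finite. Once that is pinned down, finiteness of the union together with the infinitude of $\QQ$ does all the work.
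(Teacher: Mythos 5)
Your proof is correct and takes essentially the same route as the paper: both observe that the hypothesis expresses $\spannQQ{v_1,\dots,v_k}$ as a finite union of spans of $r$-element subsets, each of dimension at most $r$, and then invoke the fact that a $\QQ$-vector space cannot be a finite union of proper subspaces. The paper's proof leaves that last fact implicit (``since $\mathfrak{B}$ is finite, the claim follows''), whereas you spell it out, which is a welcome clarification rather than a genuine deviation.
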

\begin{proof}
Let $\mathfrak{B}\subset\mathfrak{P}(v_1,\dots,v_k)$ the set of all
subsets of cardinality $r$. By our assumption,
$\cup_{B\in\mathfrak{B}}\spannQQ{B}=\spannQQ{v_1,\dots,v_k}$.
Since $\dim(\spannQQ{B})\le r$ for all $B\in\mathfrak{B}$ and
since $\mathfrak{B}$ is finite, the claim follows.
\end{proof}

\begin{prop}\label{p:AugmentingDilatation}
Let $\cP\subset\QQ^d$ be polytope and let $\cM\subset\ZZ^d$ be an
augmenting Markov basis for
$\cF_i:=(i\cdot\cP)\cap\ZZ^d$ for all $i\in\NN$. Then
$\dim(\cP)\le\max_{i\in\NN}\auglen{\cM}{\cF_i}$.
\end{prop}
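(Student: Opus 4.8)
The plan is to show that every vector of the linear subspace $L\subseteq\QQ^d$ parallel to $\mathrm{aff}(\cP)$ is a $\QQ$-linear combination of at most $r:=\max_{i\in\NN}\auglen{\cM}{\cF_i}$ elements of $\cM\cap L$, and then to read off $\dim(\cP)=\dim L\le r$ from Lemma~\ref{l:VectorSpace} applied to the finite set $\cM\cap L$. We may assume $\dim(\cP)\ge 1$ and $r<\infty$, the other cases being immediate. Since $\cP$ is a rational polytope, $L$ is a rational subspace, so $\Lambda:=L\cap\ZZ^d$ is a lattice of rank $\dim L$ with $\spannQQ{\Lambda}=L$.

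The first step would be a \emph{realization} claim: for every $w\in\Lambda$ I would produce $i\in\NN$ and $u,v\in\cF_i$ with $u-v=w$. To do this, fix a rational point $p_0$ in the relative interior of $\cP$, a radius $\rho>0$ with $B(p_0,\rho)\cap\mathrm{aff}(\cP)\subseteq\cP$, and an integer $i_0\ge 1$ with $i_0p_0\in\ZZ^d$. Given $w\in\Lambda$, take $i$ to be a multiple of $i_0$ large enough that $\|w\|<i\rho$, and set $v:=ip_0$ and $u:=ip_0+w$. Both are integer vectors; $v\in i\cP$ since $p_0\in\cP$; and because $w/i\in L$ has norm $<\rho$ we get $u/i=p_0+w/i\in B(p_0,\rho)\cap\mathrm{aff}(\cP)\subseteq\cP$, so $u\in i\cP$. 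Hence $u,v\in i\cP\cap\ZZ^d=\cF_i$ and $u-v=w$.

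Next I would feed such a pair (for $w\ne 0$) into the augmenting hypothesis. Since $\cM$ is an augmenting Markov basis for $\cF_i$, there is a \emph{minimal} augmenting path from $u$ to $v$ in $\cF_i^c(\cM)$, and by definition its length is at most $\auglen{\cM}{\cF_i}\le r$; writing it out exhibits $s\le r$, distinct moves $m_{j_1},\dots,m_{j_s}\in\cM$, and nonzero $\lambda_1,\dots,\lambda_s\in\ZZ$ with $w=u-v=-\sum_{k=1}^{s}\lambda_k m_{j_k}$. Every node of the path lies in $\cF_i\subseteq\mathrm{aff}(i\cP)=ip_0+L$, so each step $\lambda_k m_{j_k}$ lies in $L$, and $\lambda_k\ne 0$ forces $m_{j_k}\in\cM\cap L$. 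Thus every element of $\Lambda$ (the case $w=0$ being trivial) is a $\QQ$-linear combination of at most $r$ elements of $\cM\cap L$; in particular $\Lambda\subseteq\spannQQ{\cM\cap L}$, and combined with $\spannQQ{\Lambda}=L\supseteq\spannQQ{\cM\cap L}$ this gives $\spannQQ{\cM\cap L}=L$. To pass from $\Lambda$ to all of $L$ I would clear denominators: any $v\in L=\spannQQ{\Lambda}$ satisfies $Nv\in\Lambda$ for some $N\ge 1$, so $Nv$ is a short combination of elements of $\cM\cap L$, and dividing by $N$ gives one for $v$. Lemma~\ref{l:VectorSpace} then yields $\dim(\cP)=\dim L\le r=\max_{i\in\NN}\auglen{\cM}{\cF_i}$.

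The step I expect to require the most care is the realization claim: one must be sure that a \emph{prescribed} integer vector of $L$ genuinely occurs as the difference of two \emph{lattice} points of some dilate $\cF_i$ — this is exactly where the full dimensionality of $\cP$ and the freedom to enlarge $i$ enter — together with the bookkeeping that every move appearing in an augmenting path lies in $L$, so that Lemma~\ref{l:VectorSpace} is applied to a set of vectors whose $\QQ$-span is precisely $L$ rather than something larger. Everything else is routine.
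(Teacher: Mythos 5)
Your proof is correct and follows the same strategy as the paper's: realize an arbitrary integer vector of the direction space $L$ of $\cP$ as the difference of two points of a suitable dilate $\cF_i$, use the augmenting hypothesis to write it as a combination of at most $r$ moves, and invoke Lemma~\ref{l:VectorSpace}. Your bookkeeping is in fact a touch cleaner than the paper's on two minor points: by working with $\cM\cap L$ rather than all of $\cM$, you feed Lemma~\ref{l:VectorSpace} a set whose $\QQ$-span is exactly $L$ (the paper instead asserts $\dim\spannQQ{\cM}=\dim V$, which can fail if $\cM$ contains moves outside $V$ that are never usable on any fiber, and it is really $\cM\cap V$ that is being invoked there), and by fixing a rational point of $\mathrm{relint}(\cP)$ rather than normalizing $0\in\cP$ you sidestep the fact that translating $\cP$ changes which lattice points lie in its dilates.
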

\begin{proof}
Without restricting generality, we can assume that
$0\in\cP$. Let $V:=\spannQQ{\cP}$ be the $\QQ$-span of
$\cP$, then $\dim(\cP)=\dim(V)$. 
We must have $\dim(\spannQQ{\cM})=\dim(V)$ since 
$\dim(\cP)=\dim(\conv{\cF_i})$ for $i$ sufficiently large and since $\cM$ is a Markov basis
for $\cF_i$. Define $r:=\max_{i\in\NN}\auglen{\cM}{\cF_i}$
and choose any
non-zero $v\in V$ and $u\in\mathrm{relint}(\cP)\subset\QQ^d$. Then there exists
$\delta\in\QQ_{>0}$ such that $u+\delta v\in\cP$. Thus,
$\frac{1}{\delta}u+v\in\frac{1}{\delta}\cP$. Let $c\in\NN_{\ge 1}$
such that $i:=\frac{c}{\delta}\in\NN$ and 
$w:=\frac{c}{\delta}u\in\ZZ^d$. Then
$w+c
v=c(\frac{1}{\delta}u+v)\in(i\cdot\cP)\cap\ZZ^d=\cF_{i}$. By
assumption, there exists an augmenting path from $w$ to $w+c v$ using
only $r$ elements from $\cM$. Put differently, the element $c v$ from
$V$ can be represented by a linear combination of $r$ vectors from $\cM$.
Since $v$ was chosen arbitrarily, Lemma~\ref{l:VectorSpace} implies
$\dim(\cP)=\dim(V)\le r$.
\end{proof}

\begin{remark}
It is a consequence from Proposition~\ref{p:AugmentingDilatation}
that for any matrix $A\in\ZZ^{m\times d}$ with
$\ker_\ZZ(A)\cap\NN^d=\{0\}$ and an augmenting Markov
basis $\cM$, there exists $\cF\in\cP_A$ such that
$\auglen{\cM}{\cF}\ge\dim(\ker_\ZZ(A))$.
\end{remark}

Let us now shortly recall the framework from~\cite{Sinclair1992} which
is necessary to prove our main theorem. Let $G=(V,E)$ be a graph. For
any ordered pair of distinct nodes $(x,y)\in V\times V$, let
$p_{x,y}\subseteq E$ be a path from $x$ to $y$ in $G$ and let
$\Gamma:=\{p_{x,y}: (x,y)\in V\times V, x\neq y\}$ be the collection
of these paths, then $\Gamma$ is \emph{a set of canonical paths}.
Let for any edge $e\in E$, $\Gamma_e:=\{p\in\Gamma: e\in p\}$ be the
set of paths from $\Gamma$ that use $e$. Now, let $\cH:V\times V\to[0,1]$ be a symmetric
random walk on $G$ and define
\begin{equation*}
\rho(\Gamma,\cH):=\frac{\max\{|p|:
p\in\Gamma\}}{|V|}\cdot\max_{\{u,v\}\in
E}\frac{|\Gamma_{\{u,v\}}|}{\cH(u,v)}.
\end{equation*}
Observe that symmetry of $\cH$ is needed to make $\rho(\Gamma,\cH)$
well-defined. This can be used to prove the following upper bound on
the second largest eigenvalue.

\begin{lemma}\label{l:CanonicalPaths}
Let $G$ be a graph, $\cH$ be a symmetric random walk on $G$, and
$\Gamma$ be a set of canonical paths in $G$. Then
$\lambda_2(\cH)\le 1-\frac{1}{\rho(\Gamma,\cH)}$.
\end{lemma}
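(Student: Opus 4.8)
The plan is to combine the Courant--Fischer description of $\lambda_2$ for a symmetric matrix with a single Cauchy--Schwarz estimate along the canonical paths; this is the congestion argument of Sinclair and Jerrum. Write $n:=|V|$. Since $\cH$ is symmetric, it is a real symmetric $n\times n$ matrix whose top eigenvalue $1$ has eigenvector $\mathbf 1$, and for every $\varphi\colon V\to\RR$ one has the two elementary identities
\begin{equation*}
\sum_{x\in V}\varphi(x)\bigl((I-\cH)\varphi\bigr)(x)=\tfrac12\sum_{x,y\in V}\cH(x,y)\bigl(\varphi(x)-\varphi(y)\bigr)^2,
\end{equation*}
which uses that $\cH$ has both row sums and column sums equal to $1$ (here symmetry is needed), and, for $\varphi$ orthogonal to $\mathbf 1$, $\sum_{x}\varphi(x)^2=\tfrac1{2n}\sum_{x,y}(\varphi(x)-\varphi(y))^2$. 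Since both sides depend only on the pairwise differences of $\varphi$, Courant--Fischer applied to $I-\cH$ yields
\begin{equation*}
1-\lambda_2(\cH)=\min\left\{\frac{n\sum_{x,y}\cH(x,y)(\varphi(x)-\varphi(y))^2}{\sum_{x,y}(\varphi(x)-\varphi(y))^2}\ :\ \varphi\colon V\to\RR\text{ non-constant}\right\},
\end{equation*}
so it suffices to prove that for every non-constant $\varphi$,
\begin{equation*}
\sum_{x,y\in V}(\varphi(x)-\varphi(y))^2\ \le\ \rho(\Gamma,\cH)\cdot n\cdot\sum_{x,y\in V}\cH(x,y)(\varphi(x)-\varphi(y))^2 .
\end{equation*}

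For this inequality I would fix $\varphi$ and, for each ordered pair $(x,y)$ with $x\neq y$, telescope $\varphi(x)-\varphi(y)$ along the edges of $p_{x,y}$ and apply the bound $(a_1+\dots+a_m)^2\le m(a_1^2+\dots+a_m^2)$ to obtain
\begin{equation*}
(\varphi(x)-\varphi(y))^2\ \le\ |p_{x,y}|\sum_{\{u,v\}\in p_{x,y}}(\varphi(u)-\varphi(v))^2 ,
\end{equation*}
noting that the per-edge term $(\varphi(u)-\varphi(v))^2$ is independent of the orientation of the edge. Summing over all ordered pairs, bounding $|p_{x,y}|\le\max_{p\in\Gamma}|p|$, and exchanging the order of summation over pairs and edges gives
\begin{equation*}
\sum_{x\neq y}(\varphi(x)-\varphi(y))^2\ \le\ \Bigl(\max_{p\in\Gamma}|p|\Bigr)\sum_{\{u,v\}\in E}|\Gamma_{\{u,v\}}|\,(\varphi(u)-\varphi(v))^2 .
\end{equation*}
On each edge carrying at least one path we have $\cH(u,v)>0$ (edges used by no path contribute $0$), so multiplying and dividing by $\cH(u,v)$, pulling out $\max_{\{u,v\}\in E}|\Gamma_{\{u,v\}}|/\cH(u,v)$, and using $\sum_{\{u,v\}\in E}\cH(u,v)(\varphi(u)-\varphi(v))^2=\tfrac12\sum_{x,y}\cH(x,y)(\varphi(x)-\varphi(y))^2$ (loops contribute zero; each non-loop edge is counted once on the left and twice on the right by symmetry) yields
\begin{equation*}
\sum_{x,y}(\varphi(x)-\varphi(y))^2\ \le\ \tfrac12\,\rho(\Gamma,\cH)\cdot n\cdot\sum_{x,y}\cH(x,y)(\varphi(x)-\varphi(y))^2 .
\end{equation*}
This is the required bound, in fact with a factor $\tfrac12$ to spare, so the lemma follows (even in the slightly sharper form $\lambda_2(\cH)\le 1-\tfrac{2}{\rho(\Gamma,\cH)}$).

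There is no genuinely hard step: the substance is the one Cauchy--Schwarz estimate, and everything else is linear algebra and bookkeeping. The points that need a little care are (i) the orientation-independence of $(\varphi(u)-\varphi(v))^2$, which makes the exchange of summation over pairs and edges clean; (ii) the translation between ordered pairs of vertices and unordered edges, which is where the harmless factor $2$ enters; (iii) loops and the fact that $\cH(u,v)>0$ exactly on the edges that carry a canonical path, so that the division by $\cH(u,v)$ is legitimate; and (iv) keeping track that the Courant--Fischer quotient controls $\lambda_2(\cH)$ rather than $\lambda(\cH)=\max\{\lambda_2(\cH),-\lambda_n(\cH)\}$ — but $\lambda_2(\cH)$ is exactly what the lemma asserts a bound on.
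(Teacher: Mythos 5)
Your argument is correct, but it is a genuinely different route from the paper's. The paper's proof is a one-line citation of Sinclair's Theorem~5, invoking that $\rho(\Gamma,\cH)$ dominates the congestion constant there; you instead re-derive the canonical-path bound from scratch in the special (symmetric, uniform-stationary) case, via the Courant--Fischer characterization of $1-\lambda_2(\cH)$, the Dirichlet-form identity, and a single Cauchy--Schwarz estimate along the paths. Your route is more self-contained and avoids the reversible-chain bookkeeping of Sinclair's general statement, at the cost of re-proving a known result. One point worth flagging: your claimed sharpening $\lambda_2(\cH)\le 1-2/\rho(\Gamma,\cH)$ depends on reading $\Gamma_{\{u,v\}}$ as counting \emph{ordered pairs} $(x,y)$ whose canonical path uses $\{u,v\}$. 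The paper literally defines $\Gamma$ as a \emph{set} of edge subsets, so $p_{x,y}$ and $p_{y,x}$ may coincide as elements of $\Gamma$, in which case the quantity produced by your change of summation order, $N(\{u,v\})=\#\{(x,y):x\neq y,\ \{u,v\}\in p_{x,y}\}$, only satisfies $N(\{u,v\})\le 2\,\abs{\Gamma_{\{u,v\}}}$, and the spare factor of $2$ is consumed. Under either reading the conclusion $\lambda_2(\cH)\le 1-1/\rho(\Gamma,\cH)$ survives, so the lemma as stated is proved; just do not assert the factor-of-two improvement without pinning down that convention (the complete graph $K_n$ with single-edge paths, where $\lambda_2=-1/(n-1)$, shows the sharper form would actually be false under the ``set'' reading). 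A second small imprecision: for $\{u,v\}\in E$ carrying a canonical path, $\cH(u,v)>0$ is not guaranteed by the paper's definition of a random walk on $G$ (it only forces $\cH=0$ off $E$); when $\cH(u,v)=0$ on a used edge, $\rho=\infty$ and the bound is vacuous, so no harm is done, but the phrasing should reflect that.
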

\begin{proof}
The stationary distribution of $\cH$ is the uniform distribution and
thus the statement is a direct consequence of~\cite[Theorem~5]{Sinclair1992},
since $\rho(\Gamma,\cH)$ is an upper bound on the constant defined
in~\cite[equation~4]{Sinclair1992}.
\end{proof}

\begin{thm}\label{t:MixingofAugmentingMarkovBases}
Let $\cF\subset\ZZ^d$ be finite and let
$\cM:=\{m_1,\dots,m_k\}\subset\ZZ^d$ be an augmenting Markov
basis. Let $\pi$ be the uniform and $f$ be a positive
distribution on $\cF$ and $\cM$ respectively. For $i\in[k]$, let
$r_i:=\max\{|\polyray{u}{m_i}{\cF}|:u\in\cF\}$ 
and suppose that $r_1\ge r_2\ge\dots\ge r_k$. Then
\begin{equation*}
\lambda(\heatbath{\pi}{f}{\cM}{\cF})\le1-\frac{|\cF|\cdot\min(f)}{\auglen{\cM}{\cF}\cdot\auglen{\cM}{\cF}!\cdot
3^{\auglen{\cM}{\cF}-1}\cdot2^{|\cM|}\cdot r_1r_2\cdots
r_{\auglen{\cM}{\cF}}}.
\end{equation*}
\end{thm}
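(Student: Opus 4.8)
The plan is to apply the canonical-path bound of Lemma~\ref{l:CanonicalPaths} to $\cH:=\heatbath{\pi}{f}{\cF}{\cM}$, whose underlying graph is the compressed fiber graph $\cF^c(\cM)$. Since $\pi$ is uniform, Proposition~\ref{p:HeatBath} shows that $\cH$ is reversible with respect to $\pi$ — hence symmetric — and that all its eigenvalues are non-negative, so $\lambda(\cH)=\lambda_2(\cH)$ and $\rho(\Gamma,\cH)$ is well defined for any system of canonical paths $\Gamma$. Writing $A:=\auglen{\cM}{\cF}$, it then suffices to exhibit $\Gamma$ with $\rho(\Gamma,\cH)\le \frac{A\cdot A!\cdot 3^{A-1}\cdot 2^{|\cM|}\cdot r_1\cdots r_A}{|\cF|\cdot\min(f)}$, and Lemma~\ref{l:CanonicalPaths} then delivers the theorem.

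For $\Gamma$ I would pick, for each ordered pair of distinct $u,v\in\cF$, a \emph{minimal} augmenting path $p_{u,v}$ in $\cF^c(\cM)$; this exists because $\cM$ is augmenting, and by the definition of $\auglen{\cM}{\cF}$ every $p\in\Gamma$ has length at most $A$. This controls the factor $\max\{|p|:p\in\Gamma\}$ in $\rho(\Gamma,\cH)$, so the remaining work is to bound $|\Gamma_e|/\cH(e)$ for each edge $e$ of $\cF^c(\cM)$ by $A!\cdot 3^{A-1}\cdot 2^{|\cM|}\cdot r_1\cdots r_A/\min(f)$.

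Fix an edge $e=\{a,b\}$ and a move $m_j$ with $b-a\in\ZZ\cdot m_j$ (when $\cM$ has no pair of collinear moves this $m_j$ is forced by $e$; otherwise one either records it or keeps all such summands, which only improves the estimate below). Keeping the $m_j$-summand of $\cH(a,b)=\sum_{m\in\cM}f(m)\,\heatbathmove{\pi}{\cF}{m}(a,b)$ gives $\cH(e)\ge f(m_j)\,|\polyray{a}{m_j}{\cF}|^{-1}\ge\min(f)/r_j$. To bound $|\Gamma_e|$ I would build an injection from $\Gamma_e$ into a set of ``transcripts'': if $p_{u,v}\in\Gamma_e$ traverses $e$ at its $t$-th step, write it as $u=w_0\to\cdots\to w_r=v$ with $w_s=w_{s-1}+\lambda_s m_{i_s}$, distinct indices $i_1,\dots,i_r$, $r\le A$, $i_t=j$ and $\{w_{t-1},w_t\}=\{a,b\}$; the transcript records the orientation of $e$ (which endpoint is $w_{t-1}$), the set of moves $\{i_1,\dots,i_r\}$, their order along the path, and for each step $s\ne t$ a ternary bookkeeping tag. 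From the recorded endpoint $w_{t-1}$ one reconstructs $w_{t-2},\dots,w_0=u$ and $w_{t+1},\dots,w_r=v$ one step at a time: at step $s$ the coefficient $\lambda_s$ is pinned down by the fact that the new vertex lies on the ray of $\cF$ through the current vertex in direction $m_{i_s}$, a set of at most $r_{i_s}$ points ($\lambda_t$ is forced directly by $e$). Tallying the choices gives at most $2\cdot 2^{|\cM|}\cdot A!\cdot 3^{A-1}\cdot\prod_{s\ne t}r_{i_s}$; since the $i_s$ are distinct, $r_j\cdot\prod_{s\ne t}r_{i_s}=\prod_{s=1}^r r_{i_s}\le r_1r_2\cdots r_A$ (a product of at most the $A$ largest of the $r_i$), so $|\Gamma_e|/\cH(e)\le 2^{|\cM|}\cdot A!\cdot 3^{A-1}\cdot r_1\cdots r_A/\min(f)$, as required.

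The main obstacle is exactly what it always is in this method: verifying that $(e,\text{transcript})$ genuinely determines $(u,v)$ — hence $p_{u,v}$ — and hence that the map is injective, including the fussy cases where $t$ is not unique or an augmenting path revisits a vertex, and choosing the ternary tags so that the split of the used moves into the ``before $e$'' and ``after $e$'' halves of the path is recoverable; pinning the constants $A!$, $3^{A-1}$, $2^{|\cM|}$ down precisely is then a routine count. The other ingredients — symmetry and non-negativity of $\cH$ from Proposition~\ref{p:HeatBath}, the lower bound $\cH(e)\ge\min(f)/r_j$, the per-step bound $r_{i_s}$ on the coefficients, and the invocation of Lemma~\ref{l:CanonicalPaths} — are immediate.
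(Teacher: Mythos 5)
Your proposal tracks the paper's proof essentially step for step: minimal augmenting canonical paths, the lower bound $\cH(e)\ge\min(f)/r_j$, and an upper bound on $|\Gamma_e|$ by counting move set ($2^{|\cM|}$), ordering ($A!$), and coefficients ($3^{A-1}\prod r_i$), finishing with Lemma~\ref{l:CanonicalPaths}. The only substantive difference is framing — the paper overcounts directly by summing over move-sets $S\ni k$ rather than building an injection into transcripts, which side-steps the injectivity worries you flag — and your stray factor of $2$ for edge orientation is harmlessly absorbed in $2^{|\cM|}$ since the recorded move-set is already constrained to contain $j$.
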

\begin{proof}
Choose for any distinct
$u,v\in\cF$ an augmenting path $p_{u,v}$ of minimal length in
$\cF^c(\cM)$ and let $\Gamma$ be the collection of all
these paths. Let $u+\mu m_k=v$ be an edge in
$\cF^c(\cM)$, then our goal is to bound $|\Gamma_{\{u,v\}}|$ from
above. Let $\mathcal{S}:=\{S\subseteq[r]: |S|\le\auglen{\cM}{\cF},
k\in S\}$ and take any path $p_{x,y}\in\Gamma_{\{u,v\}}$. Then there exists
$S:=\{i_1,\dots,i_s\}$ with $s:=|S|\le\auglen{\cM}{\cF}$ such that
$x+\sum_{k=1}^s\lambda_{i_k}m_{i_k}=y$. Since $p_{x,y}$ uses the edge
$\{u,v\}$, there is $j\in[s]$ such that $i_j=k$ and
$\lambda_{i_j}=\mu$. Since $|\lambda_{i_k}|\le r_{i_k}$, there are at
most  
$$s!\cdot (2r_{i_1}+1)\cdots (2r_{i_{j-1}}+1)\cdot(2r_{i_{j+1}}+1)\cdots
(2r_{i_s}+1)\le
s!\cdot 3^{s-1}\prod_{t\in S\setminus\{k\}}r_{t}$$
paths in
$\Gamma_{\{u,v\}}$ that uses the edge $\{u,v\}$ and the moves
$m_{i_1},\dots,m_{i_{j-1}},m_{i_{j+1}}\dots,m_{i_s}$. Since all the
paths are minimal, they have length at most $\auglen{\cM}{\cF}$ so
indeed every path in $\Gamma$ has that form.
\begin{equation*}
\frac{|\Gamma_{u,v}|}{\heatbath{\pi}{f}{\cF}{\cM}(u,v)}\le
3^{\auglen{\cM}{\cF}-1}\frac{\sum_{S\in\mathcal{S}}\left(|S|!\prod_{t\in
S\setminus\{k\}}r_{t}\right)}{f(m_{i_j})\cdot\frac{1}{|\ray{u}{m_{i_j}}|}}\le\frac{3^{\auglen{\cM}{\cF}-1}\cdot
\auglen{\cM}{\cF}!\cdot|\mathcal{S}|\cdot
r_1r_2\dots r_{\auglen{\cM}{\cF}}}{f(m_{i_j})},
\end{equation*}
where we have used the assumption $r_1\ge r_2\ge \dots\ge r_k$.
Bounding $|\mathcal{S}|$ rigorously from above by $2^{|\cM|}$, the claim follows
from Lemma~\ref{l:CanonicalPaths}.
\end{proof}

\begin{defn}
Let $\cF\subset\ZZ^d$ and $\cM\subset\ZZ^d$ be finite sets. The longest
ray through $\cF$ along vectors of $\cM$ is
$\longestRay{\cF}{\cM}:=\arg\max\{|\polyray{u}{m}{\cF}|: m\in\cM, u\in
\cF\}$.
\end{defn}

\begin{cor}\label{c:AugmentingExpander}
Let $(\cF_i)_{i\in\NN}$ be a sequence of finite sets in $\ZZ^d$ and let
$\pi_i$ be the uniform distribution on $\cF_i$. Let $\cM\subset\ZZ^d$ be
an augmenting Markov basis for $\cF_i$ with $\auglen{\cM}{\cF_i}\le\dim(\cF_i)$
and suppose that
$(|\longestRay{\cF_i}{\cM}|)^{\dim(\cF_i)})_{i\in\NN}\in\mathcal{O}(|\cF_i|)_{i\in\NN}$.
Then for any positive mass function $f:\cM\to[0,1]$, there exists $\epsilon>0$ such that
$\lambda(\heatbath{\pi_i}{f}{\cF_i}{\cM})\le 1-\epsilon$ for all
$i\in\NN$.
\end{cor}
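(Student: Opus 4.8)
The plan is to read the corollary off Theorem~\ref{t:MixingofAugmentingMarkovBases} by checking that the fraction subtracted there stays bounded away from $0$ as $i$ varies. First I would fix the positive mass function $f$ and put $\mu:=\min(f)>0$. If $|\cF_i|=1$ the walk is the identity and the bound holds trivially, so I restrict to those $i$ with $|\cF_i|>1$; for such $i$ there is an augmenting path of positive length between any two distinct nodes, so $\auglen{\cM}{\cF_i}\ge 1$. Since $f$ is positive on the Markov basis $\cM$, Proposition~\ref{p:HeatBath} gives that $\heatbath{\pi_i}{f}{\cF_i}{\cM}$ is irreducible and aperiodic, hence $\lambda(\heatbath{\pi_i}{f}{\cF_i}{\cM})<1$ for every $i$; this strict inequality is what will let me absorb the finitely many indices for which the $\mathcal{O}$-hypothesis is not yet active.

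Next I would bound the denominator appearing in Theorem~\ref{t:MixingofAugmentingMarkovBases}. Write $a_i:=\auglen{\cM}{\cF_i}$ and $D_i:=\dim(\cF_i)$; by hypothesis $1\le a_i\le D_i$, and $D_i\le d$ because $\cF_i\subset\ZZ^d$. Since $x\mapsto x\cdot x!\cdot 3^{x-1}$ is increasing on the positive integers, the purely combinatorial part $a_i\cdot a_i!\cdot 3^{a_i-1}\cdot 2^{|\cM|}$ is at most the constant $K:=d\cdot d!\cdot 3^{d-1}\cdot 2^{|\cM|}$, which depends only on $d$ and the fixed $\cM$. For the product of ray lengths, every ray contains its base point so each $r_j\ge 1$, and after the sorting $r_1\ge\dots\ge r_k$ we have $r_1=|\longestRay{\cF_i}{\cM}|$; hence
\[
r_1r_2\cdots r_{a_i}\ \le\ r_1^{\,a_i}\ \le\ r_1^{\,D_i}\ =\ |\longestRay{\cF_i}{\cM}|^{\dim(\cF_i)},
\]
where the middle inequality uses $r_1\ge 1$ and $a_i\le D_i$. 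Substituting these two estimates into Theorem~\ref{t:MixingofAugmentingMarkovBases} gives, for every $i$ with $|\cF_i|>1$,
\[
\lambda(\heatbath{\pi_i}{f}{\cF_i}{\cM})\ \le\ 1-\frac{\mu\cdot|\cF_i|}{K\cdot|\longestRay{\cF_i}{\cM}|^{\dim(\cF_i)}}.
\]

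Finally I would feed in the growth hypothesis: there are $C\in\QQ_{>0}$ and $N\in\NN$ with $|\longestRay{\cF_i}{\cM}|^{\dim(\cF_i)}\le C\,|\cF_i|$ for all $i\ge N$. For those $i$ the last display yields $\lambda(\heatbath{\pi_i}{f}{\cF_i}{\cM})\le 1-\mu/(KC)$. Setting
\[
\epsilon\ :=\ \min\bigl\{\,\mu/(KC),\ 1-\max\{\lambda(\heatbath{\pi_i}{f}{\cF_i}{\cM}):i<N\}\,\bigr\}\ >\ 0,
\]
a minimum of finitely many strictly positive reals by the irreducibility/aperiodicity remark above, one gets $\lambda(\heatbath{\pi_i}{f}{\cF_i}{\cM})\le 1-\epsilon$ for all $i$, as claimed. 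The argument is essentially bookkeeping on top of Theorem~\ref{t:MixingofAugmentingMarkovBases}, so I do not expect a real obstacle; the two points that need a moment of care are the exponent inequality $r_1\cdots r_{a_i}\le|\longestRay{\cF_i}{\cM}|^{\dim(\cF_i)}$, which relies on both $a_i\le\dim(\cF_i)$ and the nonemptiness of every ray, and the handling of the finitely many indices outside the range of the $\mathcal{O}$-estimate, for which the strict spectral gap from Proposition~\ref{p:HeatBath} is used.
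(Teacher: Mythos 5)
Your proof is correct and spells out exactly what the paper summarizes as ``a straightforward application of Theorem~\ref{t:MixingofAugmentingMarkovBases}'': bound the combinatorial factor $\auglen{\cM}{\cF_i}\cdot\auglen{\cM}{\cF_i}!\cdot 3^{\auglen{\cM}{\cF_i}-1}\cdot 2^{|\cM|}$ by a constant depending only on $d$ and $\cM$ (using $\auglen{\cM}{\cF_i}\le\dim(\cF_i)\le d$), bound the truncated ray product by $|\longestRay{\cF_i}{\cM}|^{\dim(\cF_i)}$ (using $r_j\ge 1$ and the sorting), and then feed in the $\mathcal{O}$-hypothesis, with the finitely many exceptional indices absorbed via irreducibility from Proposition~\ref{p:HeatBath}. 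This is the same route as the paper's, just with the bookkeeping made explicit.
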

\begin{proof}
This is a straightforward application of
Theorem~\ref{t:MixingofAugmentingMarkovBases}. 
\end{proof}

\begin{cor}\label{c:ExpanderDilatation}
Let $\cP\subset\ZZ^d$ be a polytope, $\cF_i:=(i\cdot\cP)\cap\ZZ^d$
for $i\in\NN$, and let $\pi_i$ be the uniform distribution on $\cF_i$.
Suppose that $\cM\subset\ZZ^d$ is an augmenting Markov basis
$\{\cF_i:i\in\NN\}$ such that $\auglen{\cM}{\cF_i}\le\dim(\cP)$ for
all $i\in\NN$. 
Then for any positive mass function $f:\cM\to[0,1]$, there exists
$\epsilon>0$ such that $\lambda(\heatbath{\pi_i}{f}{\cF_i}{\cM})\le
1-\epsilon$ for all $i\in\NN$.
\end{cor}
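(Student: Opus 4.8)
The plan is to deduce the statement from Corollary~\ref{c:AugmentingExpander} applied to the sequence $(\cF_i)_{i\in\NN}$, so the work is in checking its two hypotheses. Put $e:=\dim(\cP)$ and fix any norm $\|\cdot\|$ on $\RR^d$. For all $i$ large enough the dilate $i\cdot\cP$ contains $e+1$ affinely independent lattice points, so $\dim(\cF_i)=e$; hence the assumed bound $\auglen{\cM}{\cF_i}\le\dim(\cP)$ becomes $\auglen{\cM}{\cF_i}\le\dim(\cF_i)$, which is the first hypothesis of Corollary~\ref{c:AugmentingExpander}.

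For the second hypothesis, the growth condition $(|\longestRay{\cF_i}{\cM}|^{\dim(\cF_i)})_{i\in\NN}\in\mathcal{O}(|\cF_i|)_{i\in\NN}$, I would pair a linear upper bound on ray lengths with the polynomial lower bound on fiber sizes furnished by Ehrhart's theorem. The standing assumption of Section~\ref{s:HeatBath} forbids nontrivial integer multiples of moves in $\cM$, so in particular $0\notin\cM$ and $\mu:=\min_{m\in\cM}\|m\|>0$. Every ray $\polyray{u}{m}{\cF_i}$ is contained in $i\cdot\cP$, and a ray with $t$ points spans a segment of $\|\cdot\|$-length $(t-1)\|m\|\ge(t-1)\mu$, which cannot exceed $i\cdot\max\{\|x-y\|:x,y\in\cP\}$; hence $|\longestRay{\cF_i}{\cM}|\in\mathcal{O}(i)$ and therefore $|\longestRay{\cF_i}{\cM}|^{\dim(\cF_i)}=|\longestRay{\cF_i}{\cM}|^{e}\in\mathcal{O}(i^{e})$. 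On the other hand $|\cF_i|=|(i\cdot\cP)\cap\ZZ^d|$ is an Ehrhart (quasi-)polynomial in $i$ of degree $e$ with positive leading term, so $|\cF_i|=\Theta(i^{e})$; concretely, when $\cP$ is a lattice polytope one may fix affinely independent lattice vertices $v_0,\dots,v_e$ and note that the $\binom{i+e}{e}$ distinct integer points $\sum_{j}c_jv_j$ with $c_j\in\NN$ and $\sum_jc_j=i$ all lie in $\cF_i$. Combining the two estimates gives $|\longestRay{\cF_i}{\cM}|^{\dim(\cF_i)}\in\mathcal{O}(i^{e})\subseteq\mathcal{O}(|\cF_i|)$, as required.

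Corollary~\ref{c:AugmentingExpander} then supplies an $\epsilon_1>0$ with $\lambda(\heatbath{\pi_i}{f}{\cF_i}{\cM})\le1-\epsilon_1$ for all sufficiently large $i$, and each of the finitely many remaining indices is handled directly: if $|\cF_i|\le1$ there is nothing to prove, and otherwise $\cF_i$ is a fixed finite set for which $\cM$ is a Markov basis, so Proposition~\ref{p:HeatBath} makes $\heatbath{\pi_i}{f}{\cF_i}{\cM}$ irreducible and hence $\lambda(\heatbath{\pi_i}{f}{\cF_i}{\cM})<1$. Taking $\epsilon$ to be the minimum of $\epsilon_1$ and these finitely many positive gaps finishes the proof. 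I expect the only fussy point to be the bookkeeping for small or low-dimensional dilates $\cF_i$; the substance --- ray lengths grow like $\mathcal{O}(i)$ while $|\cF_i|$ grows like $\Theta(i^{\dim\cP})$, exactly the exponent applied to the ray length --- is short.
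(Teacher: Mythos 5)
Your proof is correct and follows essentially the same route as the paper's: verify the two hypotheses of Corollary~\ref{c:AugmentingExpander} by combining a linear (in $i$) upper bound on ray lengths with the Ehrhart lower bound $|\cF_i|\in\Omega(i^{\dim\cP})$, then apply that corollary. The only substantive differences are two, both small. First, for the $\mathcal{O}(i)$ bound on ray lengths you use a norm argument (points on a ray are consecutive integer multiples of $m$ inside $i\cdot\cP$, so the ray's spanned segment has length $(t-1)\|m\|\ge(t-1)\mu$ while $\mathrm{diam}(i\cdot\cP)=i\cdot\mathrm{diam}(\cP)$), whereas the paper contracts the ray by $1/i$ and compares against the maximal number $l_k$ of $m_k$-translates fitting inside the undilated $\cP$; both are correct and give the same bound, yours is perhaps a touch cleaner to state. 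Second, you explicitly patch the edge case of small $i$, where $\dim(\cF_i)$ may be strictly less than $\dim(\cP)$ so that the hypothesis $\auglen{\cM}{\cF_i}\le\dim(\cF_i)$ of Corollary~\ref{c:AugmentingExpander} is not literally implied by the given bound $\auglen{\cM}{\cF_i}\le\dim(\cP)$; you dispose of the finitely many exceptional indices via irreducibility and Proposition~\ref{p:HeatBath}. The paper's proof silently elides this point, so your extra step is a genuine improvement in rigor rather than a digression.
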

\begin{proof}
Let $r:=\dim(\cP)$. We first show that
$(|\longestRay{\cF_i}{\cM}|)_{i\in\NN}\in\mathcal{O}(i)_{i\in\NN}$.
Write $\cM=\{m_1,\dots,m_k\}$ and denote by
$l_i:=\max\{|(u+m_i\cdot\ZZ)\cap\cP|: u\in\cP\}$ be the length of the
longest ray through the polytope $\cP$ along $m_i$.  It suffices to
prove that $i\cdot(l_k+1)$ is an upper bound on the length of any ray
along $m_k$ through $\cF_i$. For that, let $u\in\cF_i$ such that
$u+\lambda m_k\in\cF_i$ for some $\lambda\in\NN$, then
$\frac{1}{i}u+\frac{\lambda}{i} m_k\in\cP$ and thus
$\lfloor\frac{\lambda}{i}\rfloor\le l_k$, which gives $\lambda\le
i\cdot(l_k+1)$. With $C:=\max\{l_1,\dots,l_k\}+1$ we have
$|\longestRay{\cF_i}{\cM}|\le C\cdot i$.
Ehrhart's theorem~\cite[Theorem~3.23]{Beck2007} gives
$(|\cF_i|)_{i\in\NN}\in\Omega(i^r)_{i\in\NN}$ and
since $|\longestRay{\cF_i}{\cM}|\le C\cdot i$, we have
$(|\longestRay{\cF_i}{\cM}|^r)_{i\in\NN}\in\mathcal{O}(|\cF_i|)_{i\in\NN}$.
An application of Corollary~\ref{c:AugmentingExpander} proves the claim.
\end{proof}

\begin{example}\label{e:CrossPoly}
Fix $d,r\in\NN$ and let $\cC_{d,r}:=\{u\in\ZZ^d: \|u\|_1\le r\}$ be
the set of integers of the $d$-dimensional cross-polytope with radius
$r$. The set $\cM_d=\{e_1,\dots,e_d\}$
is a Markov basis for
$\cC_{d,r}$ for any $r\in\NN$. We show that $\cM_d$ is an augmenting
Markov basis whose augmentation length is at most $d$. For that, let 
$u,v\in\cC_{d,r}$ distinct elements. We claim that there exists
$i\in[d]$ such that $x_i\neq v_i$ and $u_i+(v_i-u_i)\in\cC_{d,r}$.
Let $S\subseteq[d]$ be the set of indices where $u$ and $v$ differ and let
$s= r- ||u||_1$. If $|S|=1$, then the result is clear so
suppose $|S| \geq 2$. If the result doesn't hold then for all
$i\in S$, $|v_i|-|u_i| > s$. It follows that 
\begin{equation*} 
\|v\|_1 = \sum_{i \notin S} |u_i| + \sum_{i \in S} |v_i|
 > \sum_{i \notin S} |u_i| + \sum_{i \in S} s + |u_i| = |S_{uv}|\cdot s + \|u\|_1
 = (|S|-1) \cdot s + r.
\end{equation*}
But we assumed that $v \in\cC_{d,r}$. It follows that for any pair of
points $u,v$ in $\cC_{d,r}$, there is a walk, using the unit vectors
as moves, that uses each move at most once.
Corollary~\ref{c:AugmentingExpander} yield that for any $d\in\NN$, the
second largest eigenvalue modulus of the heat-bath random walk on
$\cC_{d,r}$ with uniform as stationary distribution can be strictly
bounded away from $1$ for $r\to\infty$.
\end{example}

The bound on the second largest eigenvalue in
Theorem~\ref{t:MixingofAugmentingMarkovBases} is quite general and can
be improved vastly, provided one has better control over the paths.
For example, this can be achieved for hyperrectangles intersected with
a halfspace.

\begin{prop}\label{p:Hyperrectangles}
Let $a\in\NN^d_{>0}$, $b\in\NN$,
$\cF=\{u\in\NN^d: a^T\cdot u\le b\}$, and
$\cM:=\{e_1,\dots,e_d\}$. If $\pi$ and $f$ are the uniform
distributions on $\cF$ and $\cM$ respectively, then
\begin{equation*}
\lambda(\heatbath{\pi}{f}{\cF}{\cM})\le
1-\frac{|\cF|}{d^2}\prod_{i=1}^d\frac{a_i}{b}.
\end{equation*}
\end{prop}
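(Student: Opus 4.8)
The plan is to run the canonical‑path bound of Lemma~\ref{l:CanonicalPaths} directly on $\cH:=\heatbath{\pi}{f}{\cF}{\cM}$, rather than quoting Theorem~\ref{t:MixingofAugmentingMarkovBases}, which would only give a bound carrying spurious factorial and exponential factors in $d$. Since $\pi$ is uniform, each $\heatbathmove{\pi}{\cF}{m}$ is symmetric (cf.\ Proposition~\ref{p:MoveMatrix}), hence so is $\cH$, and by Proposition~\ref{p:HeatBath} all eigenvalues of $\cH$ are non‑negative, so $\lambda(\cH)=\lambda_2(\cH)$. Thus it suffices to produce a system of canonical paths $\Gamma$ in $\cF^c(\cM)$ with $\rho(\Gamma,\cH)\le\frac{d^2}{|\cF|}\prod_{j=1}^d\frac b{a_j}$, because then $\lambda(\cH)\le 1-1/\rho(\Gamma,\cH)\le 1-\frac{|\cF|}{d^2}\prod_{j=1}^d\frac{a_j}{b}$.

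First I would build the paths: given distinct $u,v\in\cF$, decrease one coordinate at a time (via multiples of the relevant $e_i$), in increasing order of $i$, every coordinate with $u_i>v_i$ down to $v_i$, and then increase, in increasing order of $i$, every coordinate with $u_i<v_i$ up to $v_i$. Since $a\in\NN^d_{>0}$, the set $\cF$ is downward closed in $\NN^d$, so the intermediate points of the first stage stay in $\cF$, while those of the second stage are coordinatewise $\le v$, hence also in $\cF$. Each such path has length at most $d$ and uses each direction $e_i$ at most once (an augmenting path), so $\max\{|p|:p\in\Gamma\}\le d$. Every edge of $\cF^c(\cM)$ has the form $\{w,w+\lambda e_i\}$ for a unique $i\in[d]$ and $\lambda\neq 0$, and $\cH(w,w+\lambda e_i)\ge f(e_i)\,\heatbathmove{\pi}{\cF}{e_i}(w,w+\lambda e_i)=\frac1d\cdot\frac1{|\polyray{w}{e_i}{\cF}|}$, where $\polyray{w}{e_i}{\cF}$ is an interval in coordinate $i$ because $\cF$ is downward closed.

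The crux is to bound, for a fixed edge $e=\{u,u+\lambda e_i\}$, the product $|\Gamma_e|\cdot|\polyray{u}{e_i}{\cF}|$ by $\prod_{j=1}^d\frac b{a_j}$. A path $p_{x,y}$ lies in $\Gamma_e$ exactly when it traverses $e$ during its (unique) modification of coordinate $i$. Splitting into the case where $i$ is a decreasing coordinate for $(x,y)$ (so $e$ occurs in the first stage) and the case where it is increasing (second stage), one reads off that a large block of coordinates of $x$ and of $y$ is forced to equal the corresponding coordinate of $u$ or of $u+\lambda e_i$, while the remaining coordinates of $x$, resp.\ of $y$, are free subject only to $x\in\cF$, resp.\ $y\in\cF$. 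I would then encode the pair $(x,y)$, together with a choice of point in the ray $\polyray{u}{e_i}{\cF}$, by this free data plus a bounded number of sign bits, and bound the number of admissible encodings by the number of lattice points of a constant dilate of the simplex $\{z\in\QQ^d_{\ge 0}:a^Tz\le b\}$, which is of order $\prod_j (b/a_j)$; choosing the processing order of coordinates so that the free block is as small as possible keeps this count below $\prod_{j=1}^d\frac b{a_j}$.

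Assembling the pieces,
\[
\rho(\Gamma,\cH)=\frac{\max\{|p|:p\in\Gamma\}}{|\cF|}\cdot\max_{\{u,v\}\in E}\frac{|\Gamma_{\{u,v\}}|}{\cH(u,v)}
\le\frac{d}{|\cF|}\cdot d\cdot\max_e\bigl(|\Gamma_e|\cdot|\polyray{u}{e_i}{\cF}|\bigr)\le\frac{d^2}{|\cF|}\prod_{j=1}^d\frac b{a_j},
\]
and Lemma~\ref{l:CanonicalPaths} finishes the proof. The main obstacle is the congestion estimate of the previous paragraph: a naïve product‑of‑ray‑lengths bound on $|\Gamma_e|$ loses a factor exponential in $d$, so one must genuinely exploit the simplex (not box) structure of $\cF$ via lattice‑point counting, and choose the coordinate order and the encoding carefully, in order to get the constant down to exactly $1/d^2$ with no extraneous factors.
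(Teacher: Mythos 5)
Your high-level strategy is the same as the paper's: build canonical paths of length at most $d$ in $\cF^c(\cM)$, note $\cH(w,w+\lambda e_i)\ge\frac1d\,|\polyray{w}{e_i}{\cF}|^{-1}$, and push the computation through Lemma~\ref{l:CanonicalPaths}, with the whole burden landing on the congestion bound $|\Gamma_e|\cdot|\polyray{u}{e_i}{\cF}|\le\prod_j b/a_j$. That much matches.

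The genuine gap is exactly the part you flag yourself as ``the main obstacle'': the congestion estimate is not actually carried out, and the devices you propose for it are not the right ones. The paper does \emph{not} use any lattice-point count of a dilated simplex; it uses a plain per-coordinate box bound. Its path rule is ``at each step, move the smallest index $k$ with $u_k\ne v_k$ for which the single jump to $v_k$ stays in $\cF$.'' With that rule, if $p_{u,v}$ traverses $\{x,x+ce_s\}$ at step $l$ (active indices $k_1,\dots,k_r$, $k_l=s$), one reads off $v_{k_t}=x_{k_t}$ for $t<l$ and $u_{k_t}=x_{k_t}$ for $t\ge l$, so for each active $j\ne s$ exactly one of $u_j,v_j$ is fixed at $x_j$ and the other ranges over at most $b/a_j$ values, giving $|\Gamma_e|\le\prod_{i\ne s}b/a_i$ and then $|\Gamma_e|/\cH(x,x+ce_s)\le d\prod_i b/a_i$. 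Your two-stage rule (decrease all, then increase all, each in index order) breaks this bookkeeping: once $e=\{u,u+\lambda e_i\}$ occurs in the decreasing stage, for every $j<i$ you only learn $\min(x_j,y_j)=u_j$, not \emph{which} of $x_j,y_j$ equals $u_j$, so the naive count picks up a factor $2^{i-1}$ of ``sign bits''. That is precisely the extraneous exponential factor you worry about, and neither the simplex structure of $\cF$ nor a cleverer processing order removes it; what removes it is the paper's path rule together with the observation that the fixed value is always $x_j$ on one side and the free value on the other. So the proposal identifies the right quantity to control but does not supply the argument, and the particular tools it gestures at (Ehrhart-type counts, reordering) are not the ones the paper uses and would not, as described, close the gap.
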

\begin{proof}
Observe that $\cM$ is a Markov basis for $\cF$ since all nodes are connected
with $0\in\cF$. Let $u,v\in\cF$ be distinct. We first show
that there exists $k\in[d]$ such that $u_k\neq v_k$ and
$u+(v_k-u_k)e_k\in\cF$. If $u\le v$, the statement trivially holds.
Otherwise, there exists $k\in [d]$ such that $u_k>v_k$ and the vector
obtained by replacing the $k$th coordinate of $u$ by $v_k$ remains in $\cF$.
Now, consider for the following path between $u$ and $v$: Choose the smallest index
$k\in[d]$ such that $u_k\neq v_k$ and such that $u+(v_k-u_k)\cdot
e_k\in\cF$ and proceed recursively with $u+(v_k-u_k)$ and $v$. This
gives a path $p_{u,v}$ between $u$ and $v$ of length at most $d$. Let
$\Gamma$ be the collection of all these paths.
We want to apply Lemma~\ref{l:CanonicalPaths}. Thus, let $x\in\cF$ and
consider the edge $x\rightarrow x+c\cdot e_s$. Let us count the paths
$p_{u,v}$ that use that edge. Let $u,v\in\cF$ and let $k_1,\dots,k_r\in
[d]$ be distinct indices such that
\begin{equation*}
u\rightarrow u+(v_{k_1}-u_{k_1})e_{k_1}\rightarrow
u+(v_{k_1}-u_{k_1})e_{k_1}+(v_{k_2}-u_{k_2})e_{k_2}\rightarrow\cdots\rightarrow
v
\end{equation*}
represents the path $p_{u,v}$ constructed by the upper rule. Assume
that $p_{u,v}$ uses the edge $\{x,x+ce_s\}$ and let $k_l=s$ and
$(v_{k_l}-u_{k_l})=c$. In particular,
\begin{equation*}
\begin{split}
u&+(v_{k_1}-u_{k_1})e_{k_1}+\cdots +(v_{k_{l-1}}-u_{k_{l-1}})e_{k_{l-1}}=x\\
x&+(v_{k_l}-u_{k_l})e_{k_l}+\cdots +(v_{k_r}-u_{k_r})e_{k_r}=v.
\end{split}
\end{equation*}
We see that $v_{k_t}=x_{k_t}$ for all $t< l$ and that
$u_{k_t}=x_{k_t}$ 
for all $t\ge l$. In particular, $v_{k_l}=u_{k_l}+c=x_{k_l}+c$ is also
fixed. The coordinates $u_{k_t}$ and $v_{k_t}$ are bounded
from above by $\frac{b}{a_{k_t}}$ for
all $t\in[r]$, and hence there can be at most 
\begin{equation*}
\left(\prod_{t=1}^{l-1}\frac{b}{a_{k_t}}\right)\cdot
\left(\prod_{t=l+1}^{r}\frac{b}{a_{k_t}}\right).
\end{equation*}
Since $k_1,\dots,k_t$ are distinct coordinate indices, we have
\begin{equation*}
\frac{|\Gamma_{x,x+c\cdot e_s}|}{\heatbath{\pi}{f}{\cF}{\cM}(x,x+c\cdot
e_s)}\le d\cdot\prod_{i=1}^d\frac{b}{a_i}.
\end{equation*}
Lemma~\ref{l:CanonicalPaths} finishes the proof.
\end{proof}

In fixed dimension, Proposition~\ref{p:Hyperrectangles} leads to rapid
mixing, but for $d\to\infty$, no statement can be made.
In~\cite{Morris2004}, it was shown that the simple walk with an
additional halting probability on $\{u\in\NN^d: a^tu\le
b\}\cap\{0,1\}^d$ has mixing time in $\mathcal{O}(d^{4.5+\epsilon})$.
For zero-one polytopes, simple and heat-bath walk coincide and we are
confident that a similar statement holds without the restriction on
zero-one polytopes.

The heat-bath random walk mixes rapidly when an augmenting Markov
basis with a small augmentation length is used. We think that it is
interesting to question how might an augmenting Markov bases be obtained and
how their augmentation length can be improved.

\begin{question}
Let $\cM$ be an augmenting Markov basis of $A$.  Can we find finitely
many moves $m_1,\dots,m_k$ such that the augmentation length of
$\cM\cup\{m_1,\dots,m_k\}$ on $\fiber{A}{b}$ is at most
$\dim(\ker_\ZZ(A))$ for all $b\in\cone{A}$?
\end{question}

\bibliographystyle{amsplain}
\bibliography{heatbath}

\providecommand{\bysame}{\leavevmode\hbox to3em{\hrulefill}\thinspace}
\providecommand{\MR}{\relax\ifhmode\unskip\space\fi MR }
\providecommand{\MRhref}[2]{%
  \href{http://www.ams.org/mathscinet-getitem?mr=#1}{#2}
}
\providecommand{\href}[2]{#2}
\begin{thebibliography}{10}

\bibitem{Baumert2009}
Stephen Baumert, Archis Ghate, Seksan Kiatsupaibul, Yanfang Shen, Robert~L.
  Smith, and Zelda~B. Zabinsky, \emph{{Discrete Hit-and-Run for Sampling Points
  from Arbitrary Distributions Over Subsets of Integer Hyperrectangles}},
  Operations Research \textbf{57} (2009), no.~3, 727--739.

\bibitem{Beck2007}
Matthias Beck and Sinai Robins, \emph{{Computing the Continuous Discretely}},
  Springer, New York, 2007.

\bibitem{Cryan2002}
Mary Cryan, Martin Dyer, Leslie~Ann Goldberg, Mark Jerrum, and Russell Martin,
  \emph{{Rapidly Mixing Markov Chains for Sampling Contingency Tables with a
  Constant Number of Rows}}, SIAM Journal on Computing \textbf{36} (2006),
  no.~1, 247--278.

\bibitem{Loera2013}
Jes\'{u}s~A. {De Loera}, Raymond Hemmecke, and Matthias K\"{o}ppe,
  \emph{Algebraic and {G}eometric {I}deas in the {T}heory of {D}iscrete
  {O}ptimization}, MPS-SIAM Series on Optimization, SIAM, Cambridge, 2013.

\bibitem{loera-augmentation}
Jes{\'{u}}s~A. {De Loera}, Raymond Hemmecke, and Jon Lee, \emph{{On
  Augmentation Algorithms for Linear and Integer-Linear Programming: From
  Edmonds--Karp to Bland and Beyond}}, SIAM Journal on Optimization \textbf{25}
  (2015), no.~4, 2494--2511.

\bibitem{Diaconis1998a}
Persi Diaconis and Bernd Sturmfels, \emph{{Algebraic algorithms for sampling
  from conditional distributions}}, The Annals of statistics \textbf{26}
  (1998), no.~1, 363--397.

\bibitem{drton2008}
Mathias Drton, Bernd Sturmfels, and Seth Sullivant, \emph{Lectures on algebraic
  statistics}, Oberwolfach Seminars, vol.~39, Springer, Berlin, 2009, A
  Birkh\"{a}user book.

\bibitem{Dyer2014}
Martin Dyer, Catherine Greenhill, and Mario Ullrich, \emph{{Structure and
  eigenvalues of heat-bath Markov chains}}, Linear Algebra and its Applications
  \textbf{454} (2014), 57--71.

\bibitem{gardam2012}
Giles Gardam, \emph{{Expander Graphs and Kazhdan' s Property (T)}}, Bachelor's
  thesis, University of Sidney, 2012.

\bibitem{hara2010}
Hisayuki Hara, Akimichi Takemura, and Ruriko Yoshida, \emph{{On connectivity of
  fibers with positive marginals in multiple logistic regression}}, Journal of
  Multivariate Analysis (2010), 1--26.

\bibitem{hemmecke2005}
Raymond Hemmecke and Peter~N. Malkin, \emph{{Computing generating sets of
  lattice ideals and Markov bases of lattices}}, Journal of Symbolic
  Computation \textbf{44} (2009), no.~10, 1463--1476.

\bibitem{Horn2013}
Roger~A. Horn, \emph{{Matrix Analysis}}, 2nd ed., Cambridge University Press,
  New York, 2013.

\bibitem{Kim2012}
Steven~S. Kim, \emph{{Mixing Time of a Rook's Walk}}, Undergraduate certificate
  paper (2012).

\bibitem{levin2008}
David~A. Levin, Yuval Peres, and Elisabeth~L. Wilmer, \emph{{Markov chains and
  mixing times}}, American Mathematical Society, Providence, RI, 2009.

\bibitem{lovasz1999}
L{\'{a}}szl{\'{o}} Lov{\'{a}}sz, \emph{{Hit-and-run mixes fast}}, Mathematical
  Programming \textbf{86} (1999), no.~3, 443--461.

\bibitem{Lovasz2006}
L{\'{a}}szl{\'{o}} Lov{\'{a}}sz and Santosh Vempala, \emph{{Hit-and-Run from a
  Corner}}, SIAM Journal on Computing \textbf{35} (2006), no.~4, 985--1005.

\bibitem{malkin2007}
Peter~N. Malkin, \emph{{Computing Markov bases, Gr\"{o}bner bases, and extreme
  rays}}, Phd thesis, 2007, p.~223.

\bibitem{Mcleman2015}
Cam McLeman, Peter~T. Otto, John Rahmani, and Matthew Sutter, \emph{{Mixing
  times for the {R}ook's walk via path coupling}}, to appear in Involve (2016),
  1--12.

\bibitem{Morris2004}
Ben Morris and Alistair Sinclair, \emph{{Random Walks on Truncated Cubes and
  Sampling 0-1 Knapsack Solutions}}, SIAM Journal on Computing \textbf{34}
  (2004), no.~1, 195--226.

\bibitem{potka2013}
Samu Potka, \emph{{Higher connectivity of fiber graphs of Gr\"{o}bner bases}},
  Journal of Algebraic Statistics \textbf{4} (2013), no.~1, 93--107.

\bibitem{Rauh2014}
Johannes Rauh and Seth Sullivant, \emph{Lifting {M}arkov bases and higher
  codimension toric fiber products}, Journal of Symbolic Computation
  \textbf{74} (2016), 276--307.

\bibitem{Sebo1990}
Andr{\'{a}}s Seb{\"{o}}, \emph{{Hilbert Bases, Caratheodory's Theorem and
  Combinatorial Optimization}},  (1990), 431--455.

\bibitem{Sinclair1992}
Alistair Sinclair, \emph{{Improved Bounds for Mixing Rates of Markov Chains and
  Multicommodity Flow}}, Combinatorics, Probability and Computing \textbf{1}
  (1992), no.~4, 351--370.

\bibitem{Sturmfels1996}
Bernd Sturmfels, \emph{{Gr{\"{o}}bner bases and convex polytopes}}, American
  Mathematical Society, Providence, R.I., 1996.

\bibitem{sullivant2003}
Seth Sullivant, \emph{{Markov bases of binary graph models}}, Annals of
  Combinatorics \textbf{7} (2003), 441--466.

\bibitem{Vempala2005}
Santosh~S. Vempala, \emph{{Geometric Random Walks: A Survey}}, MSRI
  Combinatorial and Computational Geometry \textbf{52} (2005), 573--612.

\bibitem{windisch2015-mixing}
Tobias Windisch, \emph{{Rapid mixing and Markov bases}}, preprint,
  arXiv:1505.03018 (2015), 1--18.

\end{thebibliography}
\end{document}